\newcommand\bigexists{%
  \mathop{\lower0.75ex\hbox{\ensuremath{%
    \mathlarger{\mathlarger{\mathlarger{\mathlarger{\exists}}}}}}}%
  \limits}
\newcommand{\dav}[1]{\todo[color=blue!30,inline,caption={}]{\textbf{D:} #1}}
\newcommand{\sam}[1]{\todo[color=red!30,inline,caption={}]{\textbf{S:} #1}}
\newcommand{\ovln}[1]{\overline{#1}}
\newcommand{\angbr}[2]{\langle #1,#2 \rangle}
\newcommand{\freccia}[3]{#2\colon#1 \to #3}
\newcommand{\frecciainj}[3]{\xymatrix{#2 \colon #1  \ar@{^{(}->}[r] &  #3}}
\newcommand{\frecciasopra}[3]{#1\xrightarrow{#2} #3}
\newcommand{\pbmorph}[2]{#1^{\ast}#2} 
\newcommand{\duefreccia}[3]{\xymatrix@C=0.5cm{#2 \colon #1  \ar@{=>}[r] &  #3}}
\newcommand{\duemorfismo}[6]{\xymatrix@+1pc{
#1^{\op} \ar[rrd]^#2_{}="a" \ar[dd]_{#3^{\op}}\\
&& \infsl\\
#5^{\op}  \ar[rru]_#6^{}="b"
\ar_{#4}  "a";"b"}}
\newcommand{\comsquare}[8]{ \xymatrix@+1pc{ 
#1 \ar[r]^{#5} \ar[d]_{#6} & #2 \ar[d]^{#7} \\
#3 \ar[r]_{#8} & #4 
}}
\newcommand{\pullback}[8]{ \xymatrix@+1pc{ 
#1 \pullbackcorner \ar[r]^{#5} \ar[d]_{#6} & #2 \ar[d]^{#7} \\
#3 \ar[r]_{#8} & #4 
}}
\newcommand{\quadratocomm}[8]{ \xymatrix@+1pc{ 
#1 \ar[r]^{#5} \ar[d]_{#6} & #2 \ar[d]^{#7} \\
#3 \ar[r]_{#8} & #4 
}}
\newcommand{\comsquarelargo}[8]{ \xymatrix@+1pc{ 
#1 \ar[rr]^{#5} \ar[d]_{#6} && #2 \ar[d]^{#7} \\
#3 \ar[rr]_{#8} && #4 
}}
\newcommand{\parallelmorphisms}[4]{\xymatrix@+1pc{
#1 \ar @<+4pt>[r]^{#2} \ar @<-4pt>[r]_{#3} & #4
}}
\newcommand{\relation}[4]{\xymatrix@+1pc{
\angbr{#2}{#3}\colon #1 \ar @<+4pt>[r] \ar @<-4pt>[r] & #4
}}
\newcommand{\frecceparalleleopposte}[4]{\xymatrix@+1pc{
#1 \ar@<+4pt>[r]^{#2} \ar@<-4pt>@{<-}[r]_{#3} & #4
}}
\newcommand{\equalizer}[6]{\xymatrix@+1pc{
#1 \ar[r]^{#2} & #3 \ar @<+4pt>[r]^{#4} \ar @<-4pt>[r]_{#5} & #6
}}
\newcommand{\coequalizer}[6]{\xymatrix@+1pc{
 #1 \ar @<+4pt>[r]^{#2} \ar @<-4pt>[r]_{#3} & #4 \ar[r]^{#5} & #6
}}
\newcommand{\sottoggetto}[2]{\xymatrix{
#1 \ar@{>->}[r] & #2
}}
\newcommand{\pullbackcorner}[1][ul]{\save*!/#1+1.2pc/#1:(1,-1)@^{|-}\restore}
\def\pr{\pi}
\def\id{\operatorname{ id}}
\def\op{\operatorname{ op}}
\def\mC{\mathcal{C}}
\def\hey{\mathsf{Hey}}
\def\infsl{\mathsf{InfSl}}
\newcommand{\pca}[1]{\mathbb{#1}}
\def\set{\mathsf{Set}}
\newcommand{\doctrine}[2]{#2\colon #1^{\op}\longrightarrow\infsl}
\newcommand{\hyperdoctrine}[2]{#2\colon #1^{\op}\longrightarrow\hey}
\newcommand{\reglex}[1]{(#1)_{\mathsf{reg}/\mathsf{lex}}}
\newcommand{\loc}{\mathsf{A}}
\newcommand{\supercomp}{\mathsf{S}}
\newcommand{\compexfull}[1]{{#1}^{\exists_{\mathsf{f}}}}
\newcommand{\powerset}{\mathscr{P}}
\def\sP{\mathsf{P}}
\theoremstyle{plain} 
\newtheorem{theorem}{Theorem}[section]
\newtheorem{corollary}[theorem]{Corollary}
\newtheorem{lemma}[theorem]{Lemma}
\newtheorem{proposition}[theorem]{Proposition}
\newtheorem{property}[theorem]{Property}
\theoremstyle{definition} 
\newtheorem{definition}[theorem]{Definition}
\newtheorem{remark}[theorem]{Remark}
\newtheorem{example}[theorem]{Example}
\newcommand{\lar}[1]{\mathlarger{\mathlarger{\mathlarger{#1}}}}
\title{On categorical structures arising from implicative algebras: \\from topology to assemblies}
\author{Samuele Maschio, Davide Trotta}
\date{}
\begin{document}
\begin{abstract}
Implicative algebras have been recently introduced by Miquel in order to provide a unifying notion of model, encompassing the most relevant and used ones, such as realizability (both classical and intuitionistic), and forcing. In this work, we initially approach implicative algebras as a generalization of locales, and we extend several topological-like concepts to the realm of implicative algebras, accompanied by various concrete examples. Then, we shift our focus to viewing implicative algebras as a generalization of partial combinatory algebras. We abstract the notion of a category of assemblies, partition assemblies, and modest sets to arbitrary implicative algebras, and thoroughly investigate their categorical properties and interrelationships.
\end{abstract}

\maketitle

\tableofcontents
\section{Introduction}

The notion of \emph{implicative algebra} has been recently introduced by Miquel \cite{miquel_2020} as a simple algebraic tool to encompass important model-theoretic constructions. These constructions include those underlying forcing and realizability, both in intuitionistic and classical logic. 
 In a subsequent work\cite{miquel_2020_2}, Miquel further reinforces the previously demonstrated outcome by showing that every ``well-behaved $\set$-based semantics'' can be presented as a specific instance of a model within the context of an implicative algebra.

To reach this goal, implicative algebras were initially situated within the categorical setting of $\set$-based \emph{triposes} \cite{TT,TTT}, demonstrating that every implicative algebra induces a $\set$-based tripos~\cite[Thm. 4.4]{miquel_2020}. This result can be seen as a particular case of a more general result presented in \cite[Sec. 5.3]{santos_frey_guillermo_malherbe_miquel_2017} based on the notion of \emph{implicative ordered combinatory algebra.}, since implicative algebras are a particular instance of such a notion. Subsequently, it was proven  that every $\set$-based tripos is isomorphic to an implicative one \cite[Thm. 1.1]{miquel_2020_2}.

It is important to note that  Miquel's results represent the culmination of a series of studies aimed at showing, from an abstract perspective, the essential common features between various realizability-like models and localic models, utilizing categorical tools.

In particular, Hofstra introduced in \cite{Hofstra2006} the notion of \emph{basic combinatory
objects} (BCOs) to encompass (ordered) PCAs and locales, and he provided a characterisation of triposes arising as triposes for ordered PCAs (with filters). The non-ordered version of BCOs, known as \emph{discrete combinatory objects}, was then introduced by Frey in \cite{FreyRT} and employed to provide an ``intrinsic'' or ``extensional'' characterization of realizability toposes.

The main objective of this work is to further explore the abstract perspective that aims to unify realizability-like interpretations and forcing-like (or localic) interpretations. This is achieved by focusing on two aspects: generalizing several topological notions from locale theory to implicative algebras, and employing these new notions to formally define a notion of \emph{category of assemblies} and \emph{category of partitioned assemblies} for implicative algebras. This generalization extends the existing cases of categories of partitioned assemblies and assemblies for a PCA \cite{HYLAND1982165,FreyRT,carboni88}.
In the first part of this work, we concentrate on generalizing standard notions such as \emph{supercompact}, \emph{indecomposable} and \emph{disjoint} elements at the level of implicative algebras. We study these notions in several examples. 
One significant challenge when generalizing localic-like notions to implicative algebras  is the need to consider a suitable form of \emph{uniformity}. This is necessary because, unlike the localic case, the separator of an arbitrary implicative algebra may have multiple elements. This is the case, for example, for the separators of several implicative algebras arising from various forms of realizability. This fact makes, for example, the transition from a point-wise notion to its generalization via indexed sets non-trivial. 

This abstract framework lays the groundwork for the second part of this work, in which we extend various notions derived from realizability to implicative algebras. This extension enables us to offer a topological-like interpretation of these notions.

After completing this initial study, which draws inspiration from the perspective of implicative algebras as generalizations of locales, we then proceed to examine implicative algebras in relation to the broader context of \emph{partial combinatory algebras} (PCAs). This leads us to explore the abstraction of concepts such as assemblies, partitioned assemblies, and modest sets within this framework.

The problem of generalizing these notions from PCAs to arbitrary implicative algebras can be addressed from different perspectives: for example, since assemblies are pairs $(X,\psi)$ where $\freccia{X}{\varphi}{\powerset^* (R)}$ is a function from $X$ to the non-empty powerset of the  PCA $R$, one could try to generalize this notion by defining an assembly for an implicative algebra $\mathbb{A}=(\mathcal{A},\leq,\to,\Sigma)$ as a pair $(X,\psi)$ where  $\freccia{X}{\psi}{\Sigma}$ is a function from the set $X$ to the separator $\Sigma$ (since, for the implicative algebra associated with a PCA, we have that $\Sigma:=\powerset^* (R)$). This approach has been recently used in \cite{CMW}. A second reasonable attempt, based on the fact that every realizability topos can be presented as the $\mathsf{ex/reg}$-completion of the category $\mathbf{Asm}(R)$ of assemblies of its PCA, \cite{RR,SFEC,van_Oosten_realizability}, could be that of defining the category of assemblies for an implicative algebra as the regular completion (in the sense of \cite{UEC}) of the implicative tripos associated with the PCA.
Again, this generalization would allow us to recognize the ordinary category of assemblies as a particular case, since every tripos-to-topos can be presented as the $\mathsf{ex/reg}$-completion of the regular completion of the tripos.

The first solution mainly depends on the ``explicit'' description of an assembly of a PCA, while the second one is based on the abstract properties of such a category.

In this paper, we propose a different approach: instead of focusing on the explicit description of an assembly or on the universal property of the category of assemblies, we aim to identify the \emph{logical properties} that  uniquely identify assemblies in realizability, and then define an arbitrary assembly of an implicative algebra as a pair $(X,\psi)$ where $\psi$ is a predicate of the implication tripos satisfying the logical properties we have identified.

The inspiration for this kind of abstraction is the characterization of predicates determining \emph{partitioned assemblies} presented in \cite{MaiettiTrotta21} in terms of \emph{full existential free elements}, and independently introduced in \cite{Frey2014AFS,Frey2020} via the notion of $\exists$-\emph{primes}: in these works, it has been proved that the functions  $\freccia{X}{\varphi}{R}$ where $R$ is a PCA, i.e. those used to define a partitioned assembly on $X$, correspond exactly to the predicates $\freccia{X}{\phi}{\powerset(R)}$ of the realizability tripos of $R$ satisfying  the following property: 
\begin{property}\label{eq_intro_partition}
Whenever a sequent 
   \[ \phi(x)\vdash  \exists y\in Y\,(f(y)=x\wedge \sigma(y))\]
is satisfied (in the internal language of realizability tripos), there exists a \emph{witness} function $g$ such that $\phi(x)\vdash  \sigma (g(x))$, and this property is preserved by substitutions.
  \end{property}
Following this approach,  we observe that the functions  $\freccia{X}{\varphi}{\powerset^* (R)}$, i.e.\ those used to define an assembly on $X$, correspond exactly to the predicates $\freccia{X}{\phi}{\powerset(R)}$ of the realizability tripos of $R$ satisfying the following property: 
\begin{property}\label{eq_assemb_intro}
    Whenever a sequent  
   \[ \phi (x)\vdash \exists! z\in Z\, \sigma (x,z)\]
where $\sigma$ is a \emph{functional predicate} is satisfied (in the internal language of realizability tripos), there exists a \emph{witness} function $g$ such that $\phi (x)\vdash \sigma (x,g(x))$,  and this property is preserved by substitutions.
\end{property}

The fact that these characterizations do not depend on any explicit description of an assembly or a partitioned assembly, but just on their \emph{logical properties}, makes them easy to generalize to arbitrary triposes.

Therefore,  we define an assembly for an implicative algebra $\mathbb{A}=(\mathcal{A},\leq,\to,\Sigma)$  as a pair $(X,\psi)$  where $X$ is a set and $\freccia{X}{\psi}{\mathcal{A}}$ is a predicate of the implicative tripos associated with $\mathbb{A}$ satisfying the property \eqref{eq_assemb_intro} (in the internal language of the implicative tripos), while we will say that  $\freccia{X}{\psi}{\mathcal{A}}$ is a partitioned assembly if $\freccia{X}{\psi}{\mathcal{A}}$ is a predicate of the implicative tripos associated with $\mathbb{A}$ satisfying the property \eqref{eq_intro_partition}.
Based on our previous analysis, we show that these notions correspond exactly to the generalization of the notion of indecomposable and supercompact elements, respectively, for implicative algebras.

A second crucial insight we propose here regards the concept of \emph{morphism of assemblies} and its generalizations. After introducing a notion of morphism of assemblies following the same idea used in realizability,  where morphisms are defined as 
$\set$-functions, we show that our notion of category of assemblies is equivalent to the subcategory that we called of \emph{strognly trackable objects} of the category of \emph{functional relations} associated with the implicative tripos (i.e. its regular completion in the sense of \cite{UEC}), namely objects such that every functional relation having one of these objects as domain is \emph{tracked} by a unique $\set$-based function. Then, we prove that the category of partitioned assemblies is exactly the subcategory of strongly trackable objects which are \emph{regular projectives} of the category of functional relations associated with the implicative tripos.

Finally, we conclude by studying some basic categorical properties of these categories. In particular, it is worth recalling that in realizability the category of assemblies is regular, and it happens to be equivalent to the regular completion (in the sense of \cite{SFEC}), of its full subcategory of partition assemblies. However, this connection between assemblies and partition assemblies does not hold in general for the case of an arbitrary implicative algebra. More generally, for an arbitrary implicative algebra, the category of assembly is not regular, and the category of partition assemblies has no finite limits. 

Taking inspiration again from \cite{MaiettiTrotta21}, we present necessary and sufficient conditions allowing us to understand when  a category of assemblies for an implicative algebra is regular and it is the regular completion of the category of partition assemblies.

\section{Implicative algebras}

In this section we recall the definition of implicative algebras introduced in \cite{miquel_2020}.
\subsection{Definition}
\begin{definition}[implicative structure]
   An \textbf{implicative structure} $\mathbb{A}=(\mathcal{A},\leq, \to)$ is a complete lattice $(\mathcal{A},\leq)$ equipped with a binary operation $(a,b)\mapsto (a\to b)$ called \textbf{implication} of $\mathcal{A}$ satisfying the following two axioms:
   \begin{itemize}
      \item if $a'\leq a$ and $b\leq b'$ then $a\to b\leq a'\to b'$;
      \item $a\to \bigwedge_{b\in B}b=\bigwedge_{b\in B}(a\to b)$, for every $a\in \mathcal{A}$ and every subset $B\subseteq \mathcal{A}$.
   \end{itemize}
\end{definition}

\begin{definition}[separator]
   Let $\mathbb{A}=(\mathcal{A},\leq, \to)$ be an implicative structure. A \textbf{separator} is a subset $\Sigma\subseteq \mathcal{A}$ satisfying the following conditions for every $a,b\in \mathcal{A}$:
   \begin{itemize}
      \item if $a\in \Sigma$ and $a\leq b$ then $b\in \Sigma$;
      \item $\mathbf{k}^{\mathbb{A}}:=\bigwedge_{a,b\in \mathcal{A}}(a \to b \to a)$ is an element of $\Sigma$;
      \item $\mathbf{s}^{\mathbb{A}}:=\bigwedge_{a,b,c\in \mathcal{A}}((a \to b \to c)\to (a\to b)\to a\to c)$ is an element of $\Sigma$;
      \item if $(a\to b)\in \Sigma$ and $a\in \Sigma$ then $b\in \Sigma$.
   \end{itemize}
\end{definition}
The intuition is that a separator $\Sigma\subseteq \mathcal{A}$ determines a particular ``criterion of truth'' within the implicative structure $(\mathcal{A},\leq, \to)$, generalizing the notion of filters for Heyting algebras. 


\begin{definition}[implicative algebra]
   We call an \textbf{implicative algebra} an implicative structure $(\mathcal{A},\leq,\to)$ equipped with a separator $\Sigma\subseteq \mathcal{A}$. In such a case the implicative algebra will be denoted as $(\mathcal{A},\leq,\rightarrow,\Sigma)$.
\end{definition}

\subsection{Some examples of implicative algebras}\label{subse_examples_IA}
\subsubsection*{Complete Heyting algebras}
If $\mathbb{H}=(H,\leq)$ is a complete Heyting algebra with Heyting implication $\rightarrow$, we can see it as an implicative algebra $(H,\leq,\rightarrow,\{\top\})$ where $\top$ is the maximum of $\mathbb{H}$. 
\subsubsection*{Realizability}
If $\mathcal{R}=(R,\cdot)$ is a (total) combinatory algebra (CA) (see e.g. \cite{van_Oosten_realizability}), then we can define an implicative algebra 
from it by considering the $4$-tuple, $(\mathcal{P}(R),\subseteq, \Rightarrow,\mathcal{P}(R)\setminus\{\emptyset\})$ where $A\Rightarrow B:=\{r\in R|\,r\cdot a\in B\textrm{ for all }a\in A\}$ for every $A,B\subseteq R$.

\subsubsection*{Nested realizability}
Nested realizability tripos is considered in \cite{birkedalvanoosten02,maschiostreicher15} in order to study some aspects of modified realizability and relative realizability (see \cite{vanoosten97,van_Oosten_realizability}).
We consider here only the total case in order not to make the notation heavy. The same we will do in the next examples.
Let $\mathcal{R}=(R,\cdot)$ be a combinatory algebra and let $\mathcal{R}_{\#}=(R_\#,\cdot_{\#})$ be one of its sub-combinatory algebras, that is $R_\# \subseteq R$, $a\cdot_{\#} b=a\cdot b$ for every $a,b\in R_\#$ and  $\mathbf{k}$, $\mathbf{s}$ in $\mathcal{R}$ can be chosen to be elements of $R_{\#}$. We can define an implicative algebra $\mathbb{A}^n_{\mathcal{R},\mathcal{R}_{\#}}:=(P_{\mathcal{R},\mathcal{R}_{\#}},\subseteq_n, \Rightarrow_n,\Sigma_{\mathcal{R},\mathcal{R}_{\#}})$ as follows:
\begin{enumerate}
\item $P_{\mathcal{R},\mathcal{R}_{\#}}:=\{(X_a,X_p)\in \mathcal{P}(R_\#)\times \mathcal{P}(R)|\,X_a\subseteq X_p\}$
\item $(X_a,X_p)\subseteq_n (Y_a,Y_p)$ if and only if $X_a\subseteq Y_a$ and $X_p\subseteq Y_p$;
\item $(X_a,X_p)\Rightarrow_n (Y_a,Y_p):=((X_a\Rightarrow_{\#}Y_a)\cap (X_p\Rightarrow Y_p), X_p\Rightarrow Y_p )$
\item $(X_a,X_p)\in \Sigma_{\mathcal{R},\mathcal{R}_{\#}}$ if and only if $X_a\neq \emptyset$.
\end{enumerate}
\subsubsection*{Modified realizability}
Let $\mathcal{R}=(R,\cdot)$ be a combinatory algebra and let $\mathcal{R}_{\#}=(R_\#,\cdot_{\#})$ be one of its sub-combinatory algebras and assume there exists $\star\in R_\#$ such that $\star\cdot x=\star$ for every $x\in R$ and $\mathbf{p}\cdot\star\cdot\star=\star$ for $\mathbf{p}$ the pairing combinator defined from fixed $\mathbf{k},\mathbf{s}\in R_\#$.
We can define an implicative algebra as
$$\mathbb{A}^m_{\mathcal{R},\mathcal{R}_{\#},\star}:=(P^m_{\mathcal{R},\mathcal{R}_{\#},\star},\subseteq_n, \Rightarrow_n,\Sigma_{\mathcal{R},\mathcal{R}_{\#}}\cap P^m_{\mathcal{R},\mathcal{R}_{\#},\star} )$$  where
$$P^m_{\mathcal{R},\mathcal{R}_{\#},\star}:=\{(X_a,X_p)\in P_{\mathcal{R},\mathcal{R}_{\#}}|\, \star\in X_p \}$$
\subsubsection*{Relative realizability}
Let $\mathcal{R}=(R,\cdot)$ be a combinatory algebra and let $\mathcal{R}_{\#}=(R_\#,\cdot_{\#})$ be one of its sub-combinatory algebras. We define the relative realizability implicative algebra as follows:
$$\mathbb{A}^r_{\mathcal{R},\mathcal{R}_{\#}}:=(\mathcal{P}(R),\subseteq, \Rightarrow,\Sigma^r_{\mathcal{R},\mathcal{R}_{\#}})$$ 
where
$$\Sigma^r_{\mathcal{R},\mathcal{R}_{\#}}:=\{X\in \mathcal{P}(R)|\,X\cap R_\#\neq \emptyset\}$$ 
\subsubsection*{Partial cases} Notice that one can also consider the previous cases in which the binary operations of the combinatory algebras involved are \emph{partial}. In this case we do not obtain implicative algebras, but \emph{quasi-implicative algebras}. However by considering a notion of completion which can be found in \cite{miquel_2020} one can obtain implicative algebras from them.  The choice of presenting just \emph{total} PCAs instead of the more traditional and general notion is motivated by the crucial result of Miquel, i.e.\ the fact that every quasi-implicative tripos associated with a (partial) PCA is isomorphic to an implicative one. We refer to \cite[Sec. 4]{miquel_2020} for all details.

\subsubsection*{Classical realizability}Let $\mathcal{K}=(\Lambda,\Pi,@,\cdot,\mathbf{k}_{-},\mathbf{K},\mathbf{S},\mathbf{cc},\mathsf{PL},\perp)$ be an abstract Krivine structure (see \cite{streicher13,miquel_2020}). One can define an implicative algebra as follows
$(\mathcal{P}(\Pi),\supseteq, \rightarrow, \Sigma)$
where $X\rightarrow Y:=\{t\cdot\pi|\,t\in X^\perp,\pi\in Y\}$, where $X^{\perp}:=\{t\in \Lambda|\,t\perp \pi \textrm{ for every }\pi\in Y\}$ and $\Sigma=\{X\in \mathcal{P}(\Pi)|\,X^\perp\cap PL\neq \emptyset\}$.
\subsection{The encoding of $\lambda$-terms in an implicative algebra}
In any implicative algebra $\mathbb{A}=(\mathcal{A},\leq,\to,\Sigma)$ one can define  a binary application as follows for every $a,b$ in $\mathcal{A}$
$$a\cdot b:=\bigwedge \{x\in \mathcal{A}|\,a\leq b\rightarrow x\}.$$
Using this one can encode closed $\lambda$-terms with parameters in $\mathcal{A}$ as follows:
\begin{enumerate}
\item $a^{\mathbb{A}}:=a$ for every $a\in \mathcal{A}$; 
\item $(ts)^{\mathbb{A}}:=t^{\mathbb{A}}\cdot s^{\mathbb{A}}$;
\item $(\lambda x.t)^{\mathbb{A}}:=\bigwedge_{a\in \mathcal{A}}(a\rightarrow (t[a/x])^{\mathbb{A}}$
\end{enumerate}

 A nice result is that if $t$ $\beta$-reduces to $s$, then $t^{\mathcal{A}}\leq s^{\mathcal{A}}$.
Moreover, if $t$ is a pure $\lambda$-term with free variables $x_1,...,x_n$ and $a_1,...,a_n\in \Sigma$, then $(t[a_1/x_1,...,a_n/x_n])^{\mathbb{A}}\in \Sigma$.

Finally, we can notice that $\mathbf{k}^{\mathbb{A}}$ and $\mathbf{s}^{\mathbb{A}}$ are exactly the interpretations of the $\lambda$-terms $\mathbf{k}:=\lambda x.\lambda y.x$ and $\mathbf{s}:=\lambda x.\lambda y.\lambda z.xz(yz)$ as shown in \cite[Prop. 2.24]{miquel_2020}.

\subsection{The calculus of an implicative algebra}
In every implicative algebra $\mathbb{A}=(\mathcal{A},\leq,\to,\Sigma)$ we can define first-order logical operators in such a way that we obtain a very useful calculus. 

In particular if $a,b$ are in $\mathbb{A}$ and $(c_i)_{i\in I}$ is a family of elements of $\mathbb{A}$ we can define
 $$a\times b:=\bigwedge_{x\in \mathcal{A}}\left((a\rightarrow (b\rightarrow x))\rightarrow x\right)\qquad a+b:=\bigwedge_{x\in \mathcal{A}}\left((a\rightarrow x)\rightarrow ((b\rightarrow x)\rightarrow x)\right)$$
 $$\bigexists_{i\in I}c_i:=\bigwedge_{x\in \mathcal{A}}\left(\bigwedge_{i\in I}(c_i\rightarrow x)\rightarrow x\right)$$
As shown in \cite{miquel_2020}, the following rules hold:
{\small $$
\cfrac{(x:A)\in \Gamma}{\Gamma\vdash x:A}\qquad \cfrac{}{\Gamma\vdash a:a}\qquad \cfrac{\Gamma\vdash t:a\qquad a\leq b}{\Gamma\vdash t:b}\qquad \cfrac{\Gamma'\leq \Gamma\;\;\Gamma\vdash t:a}{\Gamma'\vdash t:a}\footnote{By $\Gamma'\leq \Gamma$ we mean that for every variable assignment $x:a$ in $\Gamma$ there is $b\leq a$ such that $x:b$ is in $\Gamma'$.}\qquad \cfrac{\mathsf{Free}(t)\subseteq Var(\Gamma)}{\Gamma\vdash t:\top}
$$
$$\cfrac{\Gamma\vdash t:\bot}{\Gamma\vdash t:a}\qquad \cfrac{\Gamma,x:a\vdash t:b}{\Gamma\vdash \lambda x.t:a\rightarrow b}\qquad \cfrac{\Gamma\vdash t:a\rightarrow b\;\;\Gamma\vdash s:a}{\Gamma\vdash ts:b}\qquad \cfrac{\Gamma\vdash t:a\qquad \Gamma\vdash u:b}{\Gamma\vdash \lambda z.ztu:a\times b}$$
$$\cfrac{\Gamma\vdash t:a\times b}{\Gamma\vdash t(\lambda x.\lambda y. x):a}\qquad \cfrac{\Gamma\vdash t:a\times b}{\Gamma\vdash t(\lambda x.\lambda y. y):b}\qquad \cfrac{\Gamma\vdash t:a}{\Gamma\vdash \lambda z.\lambda w.zt:a+b}\qquad \cfrac{\Gamma\vdash u:b}{\Gamma\vdash \lambda z.\lambda w.wt:a+b}$$
$$\cfrac{\Gamma\vdash t:a+b\;\;\Gamma,x:a\vdash u:c\;\;\Gamma,y:b\vdash v:c}{\Gamma\vdash t(\lambda x.u)(\lambda y.v):c}\qquad \cfrac{\Gamma\vdash t:a_{i}\;(\textrm{for all }i\in I)}{\Gamma\vdash t:\bigwedge_{i\in I}a_i}\qquad \cfrac{\Gamma\vdash t:\bigwedge_{i\in I}a_i\;\;\overline{i}\in I}{\Gamma\vdash t:a_{\overline{i}}} $$
$$\cfrac{\Gamma\vdash t:a_{\overline{i}}\;\;\;\overline{i}\in I}{\Gamma\vdash \lambda z.zt:\bigexists_{i\in I}a_i}\qquad \cfrac{\Gamma\vdash t:\bigexists_{i\in I}a_i\;\;\Gamma,x:a_i\vdash u:c\;(\textrm{ for all }i\in I)}{\Gamma\vdash t(\lambda x.u):c}$$
}
where every sequent $\Gamma\vdash t:a$ contains a list of variable declarations $\Gamma:=x_1:a_1,...,x_n:a_n$ where $x_1,...,x_n$ are distinct variables and $a_1,...,a_n\in \mathcal{A}$, a lambda term $t$ containing as free variables at most those in $\Gamma$ and an element $a\in \mathcal{A}$. 

The meaning of such a sequent is that $(t[\Gamma])^{\mathbb{A}}\leq a$ where $t[\Gamma]$ denotes the term obtained from $t$ by performing the substitution indicated by $\Gamma$.

The calculus above is very useful, since using the remarks from the previous subsection, if we deduce that $x_1:a_1,...,x_n:a_n\vdash t:b$, $t$ is a pure $\lambda$-term and $a_1,...,a_n\in \Sigma$, then $b$ is in $\Sigma$ too.

We now state two propositions which can be easily proved by using the calculus above.
\begin{proposition}\label{existentialgen} If $\mathcal{F}$ is the class of set-indexed families of elements of an implicative algebra $\mathbb{A}$, then
$$\bigwedge_{(b_i)_{i\in I}\in \mathcal{F}}\left(\bigexists_{i\in I}b_i\rightarrow \bigvee_{i\in I}b_i\right)\in \Sigma.$$
\end{proposition}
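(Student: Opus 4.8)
# Proof Plan for Proposition \ref{existentialgen}

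The plan is to exhibit, for an arbitrary family $(b_i)_{i\in I}$, a single pure $\lambda$-term $t$ (uniform in the family) such that $x : \bigexists_{i\in I} b_i \vdash t\, x : \bigvee_{i\in I} b_i$ is derivable in the calculus of the implicative algebra, using only the structural and logical rules listed above. Since $t$ will be a pure $\lambda$-term and the only hypothesis is discharged by a $\lambda$-abstraction, the resulting closed term will have interpretation $\leq \bigexists_{i\in I} b_i \rightarrow \bigvee_{i\in I} b_i$; because this holds for every family simultaneously with the \emph{same} term $t$, and pure $\lambda$-terms land in $\Sigma$, the infimum over $\mathcal{F}$ will again lie in $\Sigma$ by the closure remarks of the previous subsection.

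The key step is to recall the relationship between the ``encoded'' existential $\bigexists_{i\in I} b_i$ and the genuine join $\bigvee_{i\in I} b_i$. First I would observe that for each fixed $\overline{\imath}\in I$ we have $b_{\overline{\imath}} \leq \bigvee_{i\in I} b_i$ by definition of the join, and hence from $x : b_{\overline{\imath}} \vdash x : \bigvee_{i\in I} b_i$ we obtain, by the introduction rule for $\bigexists$ read in reverse together with the elimination rule, a uniform way to pass from a proof of $\bigexists_{i\in I} b_i$ to a proof of $\bigvee_{i\in I} b_i$. Concretely: from $\Gamma \vdash t : \bigexists_{i\in I} a_i$ and $\Gamma, x : a_i \vdash u : c$ for all $i\in I$ one derives $\Gamma \vdash t(\lambda x. u) : c$; applying this with $a_i := b_i$, with $c := \bigvee_{i\in I} b_i$, with $u := x$ (note $x : b_i \vdash x : \bigvee_{i\in I} b_i$ holds for every $i$ by the subsumption rule $\Gamma \vdash t : a$, $a\leq b \Rightarrow \Gamma \vdash t : b$ since $b_i \leq \bigvee_{i\in I} b_i$), and with $\Gamma := (y : \bigexists_{i\in I} b_i)$, $t := y$, yields $y : \bigexists_{i\in I} b_i \vdash y(\lambda x. x) : \bigvee_{i\in I} b_i$. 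Abstracting gives $\vdash \lambda y.\, y(\lambda x. x) : \bigexists_{i\in I} b_i \rightarrow \bigvee_{i\in I} b_i$, and the term $\lambda y.\, y(\lambda x.x)$ is pure and independent of the family.

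From there the conclusion is immediate: for every family $(b_i)_{i\in I} \in \mathcal{F}$ we have $(\lambda y.\, y(\lambda x.x))^{\mathbb{A}} \leq \bigexists_{i\in I} b_i \rightarrow \bigvee_{i\in I} b_i$, so the element $\bigwedge_{(b_i)_{i\in I}\in\mathcal{F}} \big(\bigexists_{i\in I} b_i \rightarrow \bigvee_{i\in I} b_i\big)$ is an upper bound... more precisely it sits \emph{above} $(\lambda y.\, y(\lambda x.x))^{\mathbb{A}}$ — wait, in the wrong direction. Here I must be careful about variance: the sequent $\Gamma \vdash t : a$ means $(t[\Gamma])^{\mathbb{A}} \leq a$, so the derivation gives $(\lambda y.\, y(\lambda x.x))^{\mathbb{A}} \leq \bigexists_{i\in I} b_i \rightarrow \bigvee_{i\in I} b_i$ for \emph{each} family, hence $(\lambda y.\, y(\lambda x.x))^{\mathbb{A}} \leq \bigwedge_{(b_i)_{i\in I}}\big(\bigexists_{i\in I} b_i \rightarrow \bigvee_{i\in I} b_i\big)$; since $(\lambda y.\, y(\lambda x.x))^{\mathbb{A}} \in \Sigma$ (it is the interpretation of a closed pure $\lambda$-term, hence lies in $\Sigma$ by the closure property, or one checks directly it dominates a suitable combination of $\mathbf{k}^{\mathbb{A}}, \mathbf{s}^{\mathbb{A}}$) and separators are upward closed, the displayed infimum lies in $\Sigma$.

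The main obstacle I anticipate is not conceptual but bookkeeping: making sure the elimination rule for $\bigexists$ is applied with the right typing context and that the chosen realizer $u := x$ genuinely type-checks as $x : b_i \vdash x : \bigvee_{i\in I} b_i$ uniformly in $i$ — this uses only $b_i \leq \bigvee_{i\in I} b_i$ and the subsumption rule, so it is fine, but one should state it explicitly rather than leave it implicit. A secondary point worth a line is confirming that a closed pure $\lambda$-term's interpretation is always in $\Sigma$: this follows from the remark that pure $\lambda$-terms with parameters in $\Sigma$ evaluate into $\Sigma$ (here with no parameters, using that $\mathbf{k}^{\mathbb{A}}, \mathbf{s}^{\mathbb{A}} \in \Sigma$ and that $\lambda y.\, y(\lambda x. x)$ is $\lambda$-definable, so its interpretation dominates a term built from $\mathbf{k}$ and $\mathbf{s}$).
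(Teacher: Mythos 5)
Your proof is correct and is exactly the argument the paper has in mind: the paper omits the proof, remarking only that the statement ``can be easily proved by using the calculus above'', and your derivation of $\vdash \lambda y.\,y(\lambda x.x) : \bigexists_{i\in I}b_i\rightarrow\bigvee_{i\in I}b_i$ via the $\bigexists$-elimination rule together with subsumption along $b_i\leq\bigvee_{i\in I}b_i$ is precisely that calculus argument. The concluding step --- one pure closed term whose interpretation lies below every instance, hence below the class-indexed infimum, which then belongs to $\Sigma$ by upward closure --- is also handled correctly.
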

Before proving the next proposition we need to introduce two notions of equality evaluated in an implicative algebra. The first, which is the right one, is equivalent to that presented in \cite{miquel_2020} under the name $\mathbf{id}$. It is defined as follows for every set $J$ and every $j,j'\in J$
$$\delta_{J}(j,j'):=\bigexists_{i=j=j'}\top=\begin{cases}
\top\rightarrow \bot \textrm{ if }j\neq j'\\
\bigwedge_{x\in \mathcal{A}}((\top\rightarrow x)\rightarrow x)\textrm{ if }j=j'.\\
\end{cases}$$
The second one, despite seeming more natural, is not even an equivalence relation in general with respect to the logical calculus of $\mathbb{A}$
$$d_{J}(j,j'):=\begin{cases}\bot\textrm{ if }j\neq j'\\ \top\textrm{ if }j=j'.\\ \end{cases}$$

\begin{proposition}\label{equalitygen}
For every implicative algebra $\mathbb{A}$  we have that
$$\bigwedge_{J\in \set}\bigwedge_{j,j'\in J}\left(\delta_J(j,j')\rightarrow d_{J}(j,j')\right)\in \Sigma.$$
\end{proposition}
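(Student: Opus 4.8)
The plan is to observe that, once the definitions of $\delta_J$ and $d_J$ are unfolded, the infinitary meet in the statement collapses to the single element $(\top\to\bot)\to\bot$, and then to realize this element by a pure closed $\lambda$-term.

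First I would split into the two cases according to whether $j=j'$. If $j=j'$ then $d_J(j,j')=\top$; applying the second axiom of an implicative structure to the empty subset $B=\emptyset$ gives $a\to\top=a\to\bigwedge_{b\in\emptyset}b=\bigwedge_{b\in\emptyset}(a\to b)=\top$ for every $a\in\mathcal{A}$, hence $\delta_J(j,j')\to d_J(j,j')=\top$. If $j\neq j'$ then $d_J(j,j')=\bot$, and by the displayed case distinction for $\delta_J$ we have $\delta_J(j,j')=\top\to\bot$ — indeed the existential over the empty index set equals $\bigwedge_{x\in\mathcal{A}}(\top\to x)=\top\to\bigwedge_{x\in\mathcal{A}}x=\top\to\bot$ by the same axiom — so $\delta_J(j,j')\to d_J(j,j')=(\top\to\bot)\to\bot$. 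Since both values actually occur (e.g. for $J=\{0\}$ and for $J=\{0,1\}$) and $(\top\to\bot)\to\bot\leq\top$, the set of values $\{\delta_J(j,j')\to d_J(j,j') : J\in\set,\ j,j'\in J\}$ is exactly $\{\top,(\top\to\bot)\to\bot\}$, so the a priori class-indexed meet in the statement is legitimate and equals $(\top\to\bot)\to\bot$.

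It then remains to show $(\top\to\bot)\to\bot\in\Sigma$, and here I would use the calculus of an implicative algebra recalled above. Starting from the axiom $x:\top\to\bot\vdash x:\top\to\bot$ together with the $\top$-rule, which gives $x:\top\to\bot\vdash x:\top$ (the free variables of $x$ are declared in the context), the application rule yields $x:\top\to\bot\vdash xx:\bot$, and abstraction yields $\vdash\lambda x.xx:(\top\to\bot)\to\bot$. Since $\lambda x.xx$ is a pure $\lambda$-term with no free variables, $(\lambda x.xx)^{\mathbb{A}}\in\Sigma$; moreover the derived sequent means $(\lambda x.xx)^{\mathbb{A}}\leq(\top\to\bot)\to\bot$, so upward closure of $\Sigma$ gives $(\top\to\bot)\to\bot\in\Sigma$, which by the previous paragraph is precisely the element in the statement. (Alternatively, one can derive $\vdash\lambda x.xx:\delta_J(j,j')\to d_J(j,j')$ uniformly in $J,j,j'$ and conclude by the $\bigwedge$-introduction rule, which amounts to the same thing.)

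I do not expect a genuine obstacle here: the argument is essentially bookkeeping. The only points requiring a little care are the two ``empty'' instances of the structural axioms, namely $a\to\top=\top$ and $\bigexists_{i\in\emptyset}c_i=\top\to\bot$, and the remark that the meet ranging over the proper class of all sets $J$ is harmless because it takes only two distinct values. It is perhaps worth noting that the realizer of $(\top\to\bot)\to\bot$ here is the ``looping'' term $\lambda x.xx$, which causes no trouble once interpreted inside a complete lattice.
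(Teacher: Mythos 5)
Your proof is correct and follows exactly the route the paper intends: the paper gives no explicit argument for this proposition, saying only that it "can be easily proved by using the calculus above," and your reduction of the class-indexed meet to $(\top\to\bot)\to\bot$ followed by the derivation $\vdash\lambda x.xx:(\top\to\bot)\to\bot$ is precisely that calculation. The two points you flag as needing care (the empty-meet identities and the legitimacy of the class-indexed infimum) are handled correctly.
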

We conclude this section by introducing the notation $a\vdash_{\Sigma}b$ which is used as a shorthand for $a\rightarrow b\in \Sigma$ ($a,b\in \mathcal{A}$). The pair $(\mathcal{A},\vdash_{\Sigma})$ is a preorder whose posetal reflection is a Heyting algebra. We write $a\equiv_{\Sigma}b$ when $a\vdash_{\Sigma} b$ and $b\vdash_{\Sigma}a$.

\section{Topological notions in implicative algebras}\label{sec_topological_notions_IA}
The main purpose of this section is to generalize various topological notions at the level of implicative algebras.
We will see that one of the fundamental problems that arise when trying to generalize topological notions at the level of implicative algebra is that of ``uniformity``, i.e. the stability of a given property under reindexing.

In this section we fix an arbitrary implicative algebra $\mathbb{A}=(\mathcal{A},\leq,\rightarrow, \Sigma)$. 
\subsection{Disjoint families}
The first basic, yet fundamental, notion that we want to introduce and establish in the language of implicative algebras is that of \emph{disjointness}. From an algebraic perspective, the property of \emph{being disjoint} for two elements $a,b$ of a complete Heyting algebra $(H,\leq)$ can be simply stated as $a\wedge b=\bot$. This is equivalent to the statement that $a\wedge b\rightarrow \bot=\top$. Since $a\wedge a\rightarrow \top=\top$ for every $a$, one can say that a family $(a_i)_{i\in I}$ of elements of $H$ is pairwise disjoint if 
$\bigwedge_{i,j\in I}(a_i\wedge a_j\rightarrow \delta(i,j))=\top$ where $\delta(i,j)=\top$ if $i=j$ and $\delta(i,j)=\bot$ if $i\neq j$.


Since any implicative algebra has two different notions of ``infimum'', one with respect to $\leq$ (subtyping), the other with respect to $\vdash_\Sigma$ (logical entailment), it seems natural to consider two different notions to abstract such a notion of disjoint family:
\begin{definition}\label{def_disjoint_family}
A family $(a_i)_{i\in I}$ of elements of $\mathbb{A}$ is 
\begin{enumerate}
\item $\wedge$-{\bf disjoint} if
$\bigwedge_{i,i'\in I}(a_i\wedge a_{i'}\rightarrow \delta_I(i,i'))\in \Sigma$
\item $\times$-{\bf disjoint} if
$\bigwedge_{i,i'\in I}(a_i\times a_{i'}\rightarrow \delta_I(i,i'))\in \Sigma$
\end{enumerate}
\end{definition}
\begin{proposition} If a family is $\times$-{\bf disjoint}, then it is $\wedge$-{\bf disjoint}.
\end{proposition}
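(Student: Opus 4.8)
The plan is to produce a single closed pure $\lambda$-term realizing the implication from the $\times$-disjointness predicate to the $\wedge$-disjointness predicate, and then to apply the modus-ponens closure of the separator. Abbreviate $\phi:=\bigwedge_{i,i'\in I}(a_i\times a_{i'}\rightarrow \delta_I(i,i'))$ and $\psi:=\bigwedge_{i,i'\in I}(a_i\wedge a_{i'}\rightarrow \delta_I(i,i'))$, so that the hypothesis reads $\phi\in\Sigma$ and the goal is $\psi\in\Sigma$. The conceptual core is the uniform entailment $a\wedge b\vdash_\Sigma a\times b$: once a proof of the lattice meet $a\wedge b$ can be turned, uniformly in $a$ and $b$, into a proof of the Church-encoded product $a\times b$, precomposing the hypothesised realizer of $\phi$ with this conversion yields a realizer of $\psi$, because the antecedents $a_i\times a_{i'}$ get replaced by the $\vdash_\Sigma$-stronger antecedents $a_i\wedge a_{i'}$.

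First I would record the conversion term. In the calculus of $\mathbb{A}$, start from $w:a\wedge b$; since $a\wedge b\leq a$ and $a\wedge b\leq b$ hold in the complete lattice $(\mathcal{A},\leq)$, the subtyping rule gives $w:a$ and $w:b$ in the same context, and the pairing rule for $\times$ then produces $\lambda z.zww:a\times b$. Abstracting over $w$ gives the parameter-free derivation $\lambda w.\lambda z.zww:(a\wedge b)\rightarrow(a\times b)$, valid for every $a,b$.

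Next I would assemble everything into one derivation in the context $f:\phi$. Fix a pair $(i,i')$. Infimum-elimination applied to $f$ gives $f:a_i\times a_{i'}\rightarrow \delta_I(i,i')$; adjoining $w:a_i\wedge a_{i'}$ and running the conversion above yields $\lambda z.zww:a_i\times a_{i'}$, hence $f(\lambda z.zww):\delta_I(i,i')$. Discharging $w$ gives $\lambda w.f(\lambda z.zww):(a_i\wedge a_{i'})\rightarrow \delta_I(i,i')$ for every pair, so infimum-introduction produces $\lambda w.f(\lambda z.zww):\psi$ in the context $f:\phi$. Discharging $f$ finally yields the closed pure-$\lambda$-term judgement $\vdash \lambda f.\lambda w.f(\lambda z.zww):\phi\rightarrow\psi$, so that $\phi\rightarrow\psi\in\Sigma$ by the remark that the value of a pure closed $\lambda$-term derivation lands in $\Sigma$. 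Since $\phi\in\Sigma$ by hypothesis, the separator axiom ``if $(a\rightarrow b)\in\Sigma$ and $a\in\Sigma$ then $b\in\Sigma$'' gives $\psi\in\Sigma$, i.e.\ the family is $\wedge$-disjoint.

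I expect the only real subtlety to be uniformity, precisely the point flagged throughout this section. The statement is a single infimum over all pairs $(i,i')$, so I must make sure the conversion $a\wedge b\rightsquigarrow a\times b$ is witnessed by one pure term $\lambda w.\lambda z.zww$ independent of the parameters, which is what the parameter-free derivation guarantees; a family of realizers varying with $(i,i')$ would not suffice to place the whole meet $\psi$ in $\Sigma$. It is also instructive to note why only this implication is stated: the converse would require $a\times b\vdash_\Sigma a\wedge b$, which fails in general --- for instance in the realizability algebra $(\mathcal{P}(R),\subseteq,\Rightarrow,\mathcal{P}(R)\setminus\{\emptyset\})$ one can have $A,B$ with $A\times B\neq\emptyset$ but $A\cap B=\emptyset$, forcing $\bigwedge_{a,b}(a\times b\rightarrow a\wedge b)\notin\Sigma$.
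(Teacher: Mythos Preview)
Your proof is correct and follows essentially the same approach as the paper, which simply states that the result follows from $\bigwedge_{a,b\in \mathcal{A}}(a\wedge b\rightarrow a\times b)\in \Sigma$. You have just unpacked this one-liner in full detail, exhibiting the explicit witness $\lambda w.\lambda z.zww$ and spelling out the uniformity and modus-ponens steps that the paper leaves implicit.
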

\begin{proof}
This follows from the fact that $\bigwedge_{a,b\in \mathcal{A}}(a\wedge b\rightarrow a\times b)\in \Sigma$.
\end{proof}
\begin{example}
In any complete Heyting algebra the two notions presented in \Cref{def_disjoint_family} clearly coincide, because $a\wedge b=a\times b$, and they coincide with the notion of pairwise disjoint family of elements of a Heyting algebra above.
\end{example}
\begin{example}
In the case of the implicative algebra associated with a combinatory algebra $(R,\cdot)$, then one can easily check that $\wedge$-disjoint families are families $(A_i)_{i\in I}$ such that $A_i\cap A_j=\emptyset$ for every $i,j\in I$ with $i\neq j$, while $\times$-disjoint families are families $(A_{i})_{i\in I}$ such that at most one of the $A_i$'s is non-empty. 
The same holds for relative realizability implicative algebras.
\end{example}
\begin{example}
    
In the nested realizability implicative algebras a family $(A_i,B_i)_{i\in I}$ is $\wedge$-disjoint if and only if $B_i\cap B_j= \emptyset$ for every $i,j\in I$ with $i\neq j$, while it is $\times$-disjoint if and only if at most one of the $B_i$'s is  non-empty.
\end{example}
\begin{example}
In modified realizability implicative algebras, $\times$-disjoint families are families $(A_i,B_i)_{i\in I}$ in which at most one of the $A_i$'s is non-empty, while  $\wedge$-disjoint families are families $(A_i,B_i)_{i\in I}$ in which $A_i\cap A_j=\emptyset$ for all $i\neq j$ in $I$.

\end{example}

We also introduce the following notion of $\times $-functional family which will be useful later.
\begin{definition}
A two-indexed family $(b_{j}^{i})_{i\in I,j\in J}$ of elements of $\mathbb{A}$ is {\bf $\times$-functional} if 
$$\bigwedge_{i\in I}\bigwedge_{j,j'\in J}(b_j^i\times b^i_{j'}\rightarrow \delta_{J}(j,j'))\in \Sigma$$
\end{definition}
\begin{remark}\label{functional_implies_disjoint_for_fix_i}
    Notice that if $(b_{j}^{i})_{i\in I,j\in J}$ is $\times$-functional, then for every $i\in I$ the family $(b_{j}^{i})_{j\in J}$ is $\times$-disjoint.
\end{remark}
\subsection{Supercompactness}\label{SKsec}
The second notion we aim to abstract in the setting of implicative algebras is that of a \emph{supercompact element} \cite{BANASCHEWSKI199145,pp}. Recall that in the case of a complete Heyting algebra, an element $a$ is said to be \emph{supercompact} if 
$$a\leq \bigvee_{i\in I}b_i$$
implies the existence of an  $\overline{i}\in I$ such that $a\leq b_{\overline{i}}$, for every set-indexed family $(b_i)_{i\in I}$ of elements.

Taking inspiration from this notion, we introduce the following generalization:
\begin{definition}[supercompact element]\label{def_supercompact_element_IA}
An element $a\in \mathcal{A}$ is {\bf supercompact} in $\mathbb{A}$ if for every set-indexed family $(b_i)_{i\in I}$ of elements of $\mathcal{A}$ with $$a\rightarrow \bigexists_{i\in I} b_i\in \Sigma$$ there exists $\overline{i}\in I$ such that $a\rightarrow b_{\overline{i}}\in \Sigma$.
\end{definition}
\begin{remark}\label{supmin} Notice that the minimum $\bot$ can never be supercompact in $\mathbb{A}$. Indeed, if we consider an empty family we always have $\mathbf{k}^{\mathbb{A}}\leq \bot\rightarrow (\top\rightarrow \bot)=\bot\rightarrow \bigexists \emptyset$ from which it follows that $\bot\rightarrow \bigexists \emptyset\in \Sigma$.
\end{remark}
\begin{remark}\label{supmax} Notice that the maximum $\top$ is supercompact if and only if  from $\bigexists_{i\in I}b_i\in \Sigma$, one can deduce the existence of an index $\overline{i}\in I$ such that $b_{\overline{i}}\in \Sigma$. We can consider such a property as a sort of \emph{existence property}. 
Complete Heyting algebras satisfying this property are called supercompact (locales) (see \cite{pp}), while the only complete Boolean algebras safistying this property are the trivial ones (those in which every element is either $\bot$ or $\top$), since for every $a$ we have $a\vee \neg a=\top$. The implicative algebra of realizability in the total case satisfies this property thanks to Proposition \ref{existentialgen} and the fact that the union of a family of sets is non-empty if and only if at least one of them is non-empty. 
The same holds for relative realizability, nested realizability and modified realizability. 
\end{remark}
\begin{remark} One can easily notice that if $a\equiv_\Sigma b$ and $a$ is supercompact, then $b$ is supercompact too.
\end{remark}

If we want to generalize the notion of supercompact element to a notion of supercompact family of elements of $\mathbb{A}$ we have three natural ways:

\begin{definition}
A family $(a_i)_{i\in I}$ of elements of $\mathbb{A}$ is
\begin{enumerate}
\item {\bf componentwise supercompact (cSK)} if $a_i$ is supercompact for every $i\in I$;
\item \textbf{supercompact (SK)} if  for every family of families $((b^i_j)_{j\in J_i})_{i\in I}$ of elements of $\mathcal{A}$ whenever
$$\bigwedge_{i\in I}(a_i\rightarrow \bigexists_{j\in J_i}b^i_j)\in \Sigma$$
there exists $f\in (\Pi i\in I)J_i$ such that
$$\bigwedge_{i\in I}(a_i\rightarrow b^i_{f(i)})\in \Sigma$$
\item {\bf uniformly supercompact ($\mathbf{U\mbox{-}SK}$)} if $(a_{f(k)})_{k\in K}$ is $\mathbf{SK}$ for every function $f:K\rightarrow I$.
\end{enumerate}
\end{definition}
\begin{remark}\label{cSKSK}
One can observe that a family $(a_{i})_{i\in \{\star\}}$ is $\mathbf{SK}$ if and only if $a_{\star}$ is a supercompact element.
\end{remark}

\begin{proposition}\label{carusk}
The following are equivalent for a family $(a_i)_{i\in I}$ of elements of $\mathbb{A}$:
\begin{enumerate}
\item $(a_i)_{i\in I}$ is $\mathbf{U}$-$\mathbf{SK}$;
\item for every family of families of families $(((b^k_j)_{j\in J_K})_{k\in K_i})_{i\in I}$ of elements of $\mathcal{A}$ with $(K_i)_{i\in I}$ a family of pairwise disjoint sets, such that
$$\bigwedge_{i\in I}(a_i\rightarrow \bigwedge_{k\in K_i}\bigexists_{j\in J_k}b^k_j)\in \Sigma$$
there exists a function $g\in (\Pi k\in \bigcup_{i\in I}K_i)J_k$ such that
$$\bigwedge_{i\in I}(a_i\rightarrow \bigwedge_{k\in K_i}b^{k}_{g(k)})\in \Sigma $$ 
\end{enumerate}
\end{proposition}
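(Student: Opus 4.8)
The plan is to observe that conditions (1) and (2) become \emph{literally the same statement} once their data are rearranged, so that the proof is essentially bookkeeping. The combinatorial input is that a function $f\colon K\to I$ is ``the same thing'' as a family $(K_i)_{i\in I}$ of pairwise disjoint sets with $\bigcup_{i\in I}K_i=K$: from $f$ one sets $K_i:=f^{-1}(\{i\})$, and from $(K_i)_{i\in I}$ one lets $f(k)$ be the unique index $i$ with $k\in K_i$ --- and it is precisely the pairwise disjointness that makes this $f$ well defined, which is why that hypothesis is imposed in (2). The algebraic input is the implicative-structure axiom $a\to\bigwedge_{b\in B}b=\bigwedge_{b\in B}(a\to b)$, which together with $K=\bigcup_{i\in I}K_i$ yields, for every family $(\xi_k)_{k\in K}$ of elements of $\mathcal{A}$,
\[\bigwedge_{i\in I}\Bigl(a_i\to\bigwedge_{k\in K_i}\xi_k\Bigr)=\bigwedge_{i\in I}\bigwedge_{k\in K_i}(a_i\to\xi_k)=\bigwedge_{k\in K}(a_{f(k)}\to\xi_k),\]
where the last equality uses $a_{f(k)}=a_i$ for $k\in K_i$.

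For $(1)\Rightarrow(2)$ I would take the data of (2): pairwise disjoint $(K_i)_{i\in I}$ and, for each $k\in K:=\bigcup_{i\in I}K_i$, a family $(b^k_j)_{j\in J_k}$, under the assumption $\bigwedge_{i\in I}(a_i\to\bigwedge_{k\in K_i}\bigexists_{j\in J_k}b^k_j)\in\Sigma$. Forming $f\colon K\to I$ as above, the $\mathbf{U}$-$\mathbf{SK}$ property gives that $(a_{f(k)})_{k\in K}$ is $\mathbf{SK}$. The displayed identity with $\xi_k:=\bigexists_{j\in J_k}b^k_j$ rewrites the assumption as $\bigwedge_{k\in K}(a_{f(k)}\to\bigexists_{j\in J_k}b^k_j)\in\Sigma$, which is exactly the premise of the $\mathbf{SK}$ condition for $(a_{f(k)})_{k\in K}$ against the family of families $((b^k_j)_{j\in J_k})_{k\in K}$. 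The witness produced is $g\in(\Pi k\in K)J_k$ with $\bigwedge_{k\in K}(a_{f(k)}\to b^k_{g(k)})\in\Sigma$, and the identity with $\xi_k:=b^k_{g(k)}$ turns this into $\bigwedge_{i\in I}(a_i\to\bigwedge_{k\in K_i}b^k_{g(k)})\in\Sigma$; since $(\Pi k\in K)J_k=(\Pi k\in\bigcup_{i\in I}K_i)J_k$, this $g$ is the function demanded by (2).

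For $(2)\Rightarrow(1)$ I would fix an arbitrary $f\colon K\to I$ and prove $(a_{f(k)})_{k\in K}$ is $\mathbf{SK}$. Given a family of families $((d^k_j)_{j\in L_k})_{k\in K}$ with $\bigwedge_{k\in K}(a_{f(k)}\to\bigexists_{j\in L_k}d^k_j)\in\Sigma$, set $K_i:=f^{-1}(\{i\})$, which are pairwise disjoint with union $K$, and $J_k:=L_k$, $b^k_j:=d^k_j$. The displayed identity shows the premise of (2) holds, so (2) supplies $g\in(\Pi k\in K)L_k$ with $\bigwedge_{i\in I}(a_i\to\bigwedge_{k\in K_i}d^k_{g(k)})\in\Sigma$, and a final use of the identity rewrites this as $\bigwedge_{k\in K}(a_{f(k)}\to d^k_{g(k)})\in\Sigma$; thus $g$ is the required choice function.

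I do not expect a real obstacle: the statement is an ``uncurrying'' of nested choices, and the argument is bookkeeping. The only points needing care are (i) the well-definedness of $f$ from $(K_i)_{i\in I}$, which forces the pairwise-disjointness hypothesis in (2) and is the sole place it is used, and (ii) the degenerate cases --- some $K_i$ empty, or $K=\emptyset$ --- which are harmless, since $\bigwedge\emptyset=\top\in\Sigma$ and the empty family is vacuously $\mathbf{SK}$. It is also worth noting that no gluing of local witnesses across the blocks $K_i$ is needed: the $\mathbf{SK}$ property already returns a single global choice function on $K$, which is precisely what makes the passage between the two formulations go through without friction.
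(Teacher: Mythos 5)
Your proposal is correct and follows essentially the same route as the paper's proof: both directions hinge on the correspondence between a function $f\colon K\to I$ and the pairwise disjoint family $(f^{-1}(i))_{i\in I}$, together with the rewriting $\bigwedge_{i\in I}(a_i\to\bigwedge_{k\in K_i}\xi_k)=\bigwedge_{k\in K}(a_{f(k)}\to\xi_k)$ supplied by the implicative-structure axiom. Your write-up merely makes explicit the bookkeeping identity and the degenerate cases that the paper leaves implicit.
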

\begin{proof}
Let $(a_i)_{i\in I}$ satisfy $2.$ and $f:K\rightarrow I$ be a function. Assume that
$$\bigwedge_{k\in K}(a_{f(k)}\rightarrow \bigexists_{j\in J_k}b_j^k)\in \Sigma$$
This implies that 
$$\bigwedge_{i\in I}(a_{i}\rightarrow \bigwedge_{k\in f^{-1}(i)}\bigexists_{j\in J_k}b_j^k)\in \Sigma.$$
Using the fact that the family $(a_i)_{i\in I}$ satisfies $2$, we get the existence of a function $g\in(\Pi k\in K)J_k$ such that
$$\bigwedge_{k\in K}\left(a_{f(k)}\rightarrow b^k_{g(k)}\right)\in \Sigma.$$
Conversely, assume that $(a_{i})_{i\in I}$ is a family such that $(a_{f(j)})_{j\in J}$ is $\mathbf{SK}$ for every function $f:J\rightarrow I$ and that
$$\bigwedge_{i\in I}\left(a_i\rightarrow \bigwedge_{k\in K_i}\bigexists_{j\in J_k}b^k_j\right)\in \Sigma$$
where $(K_i)_{i\in I}$ is a family of pairwise disjoint sets. Let $f:K:=\bigcup_{i\in I}K_i\rightarrow I$ be the function sending each $k\in K_i$ to $i$. Thus,
$$\bigwedge_{k\in K}\left(a_{f(k)}\rightarrow \bigexists_{j\in J_k}b^k_j\right)\in \Sigma.$$
Since $(a_{f(k)})_{k\in K}$ is $\mathbf{SK}$, then there exists $g\in (\Pi k\in K)J_k$ such that
$$\bigwedge_{k\in K}\left(a_{f(k)}\rightarrow b^k_{g(k)}\right)\in \Sigma$$
that is
$$\bigwedge_{i\in I}\left(a_{i}\rightarrow \bigwedge_{k\in K_i}b^k_{g(k)}\right)\in \Sigma$$
\end{proof}
The following proposition is almost an immediate consequence of the definitions.
\begin{proposition}\label{u->c}
If $(a_i)_{i\in I}$ is $\mathbf{U\mbox{-}SK}$, then it is $\mathbf{cSK}$ and $\mathbf{SK}$.
\end{proposition}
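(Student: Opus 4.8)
The plan is to apply the definition of $\mathbf{U\mbox{-}SK}$ to two carefully chosen reindexing functions $f$, one giving $\mathbf{SK}$ and the other giving $\mathbf{cSK}$.

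First I would establish $\mathbf{SK}$. By hypothesis, $(a_{f(k)})_{k\in K}$ is $\mathbf{SK}$ for \emph{every} function $f\colon K\to I$; taking $K:=I$ and $f:=\id_I$, the reindexed family $(a_{\id_I(i)})_{i\in I}$ is literally the original family $(a_i)_{i\in I}$, so $(a_i)_{i\in I}$ is $\mathbf{SK}$.

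Next I would establish $\mathbf{cSK}$. Fix an arbitrary index $\overline{i}\in I$; we must show that $a_{\overline{i}}$ is supercompact. Consider the singleton $K:=\{\star\}$ and the (constant) function $f\colon\{\star\}\to I$ with $f(\star)=\overline{i}$. By $\mathbf{U\mbox{-}SK}$, the family $(a_{f(\star)})_{\star\in\{\star\}}$, which is precisely the one-element family whose unique entry is $a_{\overline{i}}$, is $\mathbf{SK}$. By \Cref{cSKSK}, a one-element family $(a_{\star})_{\star\in\{\star\}}$ is $\mathbf{SK}$ if and only if $a_{\star}$ is a supercompact element; hence $a_{\overline{i}}$ is supercompact. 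Since $\overline{i}\in I$ was arbitrary, every $a_i$ is supercompact, i.e.\ $(a_i)_{i\in I}$ is $\mathbf{cSK}$.

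There is essentially no obstacle here: the statement is a direct unwinding of the definitions, the only point requiring (trivial) care being the identification of the reindexed families with the original family and with the singleton families appearing in \Cref{cSKSK}. One may also note that the converse implications fail in general, which is why the three notions are genuinely distinct, but establishing that is not needed for this proposition.
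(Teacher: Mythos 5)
Your proof is correct and follows exactly the paper's argument: apply the $\mathbf{U\mbox{-}SK}$ hypothesis to the identity function on $I$ to get $\mathbf{SK}$, and to the constant functions from a singleton, combined with \Cref{cSKSK}, to get $\mathbf{cSK}$. Nothing further is needed.
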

\begin{proof}
Let $(a_i)_{i\in I}$ be $\mathbf{U}\mbox{-}\mathbf{SK}$. 
If we consider the identity function $\mathsf{id}_I$, we immediately obtain that $(a_{i})_{i\in I}$ is $\mathbf{SK}$. If we consider the functions from a singleton $\{\star\}$ to $I$ we obtain that $(a_{i})_{i\in I}$ is $\mathbf{cSK}$ by Remark \ref{cSKSK}.
%
\end{proof}

\subsection{Indecomposability}\label{Indsec}
Now we introduce a weaker variant of the notion of supercompact element, that is that of an \emph{indecompomposable element}. Again, the intuition is that an \emph{indecomposable} element $a$ of a complete Heyting algebra (locale) is an object such that whenever
$$a\leq \bigvee_{i\in I}b_i$$
 then there the existence of an  $\overline{i}\in I$ such that $a\leq b_{\overline{i}}$, for every set-indexed family $(b_i)_{i\in I}$ of pairwise \emph{disjoint} elements.
\begin{definition} An element $a$ of $\mathbb{A}$ is {\bf indecomposable} if for every $\times$-disjoint family $(b_i)_{i\in I}$ of elements of $\mathbb{A}$, whenever $a\rightarrow \bigexists_{i\in I}b_i\in \Sigma$, there exists $\overline{i}\in I$ such that $a\rightarrow b_{\overline{i}}\in \Sigma$.
\end{definition}
From the definition, it easily follows that
\begin{proposition}\label{prop_supercomp_implies_indecomposable}
Every supercompact element of $\mathbb{A}$ is also indecomposable.
\end{proposition}
\begin{remark}\label{nminfSK}
Similarly to what happens with supercompactness, if $a\equiv_\Sigma b$ and $a$ is indecomposable, then $b$ is indecomposable. Moreover, $\bot$ can never be indecomposable. 
\end{remark}
\begin{remark}
Notice that if $(b_i)_{i\in I}$ is a $\times$-disjoint family, $a$ is indecomposable, $a\rightarrow b_{i_1}\in \Sigma$ and $a\rightarrow b_{i_2}\in \Sigma$, then $a\rightarrow b_{i_1}\times b_{i_2}\in \Sigma$ from which it follows that $a\rightarrow \delta_{I}(i_1,i_2)\in \Sigma$. If $i_1\neq i_2$, then this means that $a\rightarrow (\top\rightarrow \bot)\in \Sigma$, from which it follows that $a\equiv_{\Sigma} \bot$. But we know that this cannot happen. So $i_1=i_2$. This means that the element $\overline{i}$ in the definition of indecomposable element is in fact unique. 
\end{remark}

In order to generalize the notion of indecomposability to families we consider the following five notions:
\begin{definition} 
Let $I$ be a set. A family $(a_i)_{i\in I}$ of elements of $\mathbb{A}$ is: 
\begin{enumerate}
\item {\bf componentwise indecomposable} (\textbf{cInd}) if $a_i$ is indecomposable for every $i\in I$;
\item {\bf functionally supercompact (fSK)} if for every $\times$-functional family $(b^i_j)_{i\in I, j\in J}$ of elements of $\mathcal{A}$ such that
$$\bigwedge_{i\in I}(a_i\rightarrow \bigexists_{j\in J}b^i_j)\in\Sigma$$
there exists a unique $f:I\rightarrow J$ such that
$\bigwedge_{i\in I}(a_i\rightarrow b^i_{f(i)})\in \Sigma$; 

\item {\bf uniformly functionally supercompact ($\mathbf{U\mbox{-}fSK}$)} if $(a_{f(k)})_{k\in K}$ is functionally supercompact for every function $f:K\rightarrow I$;


\item {\bf weakly functionally supercompact (wfSK)}  if for every $\times$-functional family  $(b^i_j)_{i\in I,j\in J}$ of elements of $\mathcal{A}$ such that
$$\bigwedge_{i\in I}(a_i\rightarrow \bigexists_{j\in J}b^i_j)\in \Sigma$$
there exists $f:I\rightarrow J$ such that
$\bigwedge_{i\in I}(a_i\rightarrow b^i_{f(i)})\in \Sigma$;

\item {\bf uniformly weakly functionally supercompact ($\mathbf{U\mbox{-}wfSK}$)} if $(a_{f(k)})_{k\in K}$ is weakly functionally supercompact for every function $f:K\rightarrow I$.


\end{enumerate}
\end{definition}
 By definition and using arguments similar to that in the proof of Proposition \ref{u->c} we have that:
\begin{proposition}\label{fsk->wfsk}\label{u->c}
For a family  $(a_{i})_{i\in I}$ of elements of $\mathbb{A}$ we have that:
\begin{enumerate}
\item $\mathbf{fSK}\Rightarrow\mathbf{wfSK}$;
\item $\mathbf{U\mbox{-}fSK}\Rightarrow\mathbf{U\mbox{-}wfSK}$
\item $\mathbf{U\mbox{-}fSK}\Rightarrow\mathbf{fSK}$;
\item $\mathbf{U\mbox{-}wfSK}\Rightarrow\mathbf{wfSK}$;
\item $\mathbf{U\mbox{-}wfSK}\Rightarrow\mathbf{cInd}$.
\end{enumerate}
\end{proposition}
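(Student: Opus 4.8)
The plan is to obtain all five implications directly from the definitions, in the same bookkeeping style as the proof of Proposition~\ref{u->c}. Implication~(1) is the trivial one: the defining clause of $\mathbf{fSK}$ demands the existence of a \emph{unique} $f\colon I\to J$ with $\bigwedge_{i\in I}(a_i\to b^i_{f(i)})\in\Sigma$, whereas $\mathbf{wfSK}$ only demands the existence of such an $f$, so forgetting uniqueness yields $\mathbf{fSK}\Rightarrow\mathbf{wfSK}$. For~(3) and~(4) I would instantiate the uniformity clause at the identity map $\mathsf{id}_I\colon I\to I$: since $(a_{\mathsf{id}_I(i)})_{i\in I}=(a_i)_{i\in I}$, the hypotheses $\mathbf{U\mbox{-}fSK}$ and $\mathbf{U\mbox{-}wfSK}$ specialize immediately to $\mathbf{fSK}$ and $\mathbf{wfSK}$ respectively for the family itself.

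For~(2), let $f\colon K\to I$ be arbitrary; by $\mathbf{U\mbox{-}fSK}$ the family $(a_{f(k)})_{k\in K}$ is $\mathbf{fSK}$, hence by~(1) it is $\mathbf{wfSK}$, and since $f$ was arbitrary this is precisely the statement that $(a_i)_{i\in I}$ is $\mathbf{U\mbox{-}wfSK}$.

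The only step requiring a small unwinding is~(5). First I would record the singleton analogue of Remark~\ref{cSKSK}: a one-element family $(a_\star)_{\star\in\{\star\}}$ is $\mathbf{wfSK}$ if and only if $a_\star$ is indecomposable. Indeed, a $\times$-functional family indexed by $\{\star\}\times J$ is nothing but a $\times$-disjoint family $(b_j)_{j\in J}$, since over the single index $\star$ the $\times$-functionality condition $\bigwedge_{j,j'\in J}(b^\star_j\times b^\star_{j'}\to\delta_J(j,j'))\in\Sigma$ is literally $\times$-disjointness (cf.\ Remark~\ref{functional_implies_disjoint_for_fix_i}), and for such a family the $\mathbf{wfSK}$ clause reads: if $a_\star\to\bigexists_{j\in J}b_j\in\Sigma$ then $a_\star\to b_{\bar\jmath}\in\Sigma$ for some $\bar\jmath\in J$, which is exactly indecomposability of $a_\star$. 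Granting this, to prove $\mathbf{U\mbox{-}wfSK}\Rightarrow\mathbf{cInd}$ I would fix $i\in I$ and apply $\mathbf{U\mbox{-}wfSK}$ to the map $\{\star\}\to I$ sending $\star\mapsto i$, obtaining that the singleton family $(a_i)$ is $\mathbf{wfSK}$ and hence that $a_i$ is indecomposable; as $i$ ranges over $I$ this gives $\mathbf{cInd}$. No genuine obstacle is expected: every step is a one-line invocation of a definition, the only slightly less routine point being the identification in~(5) of singleton-indexed $\times$-functional families with $\times$-disjoint families, which is itself immediate from the definitions.
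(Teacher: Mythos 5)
Your proposal is correct and follows exactly the route the paper intends: the paper omits the proof, stating only that the implications hold ``by definition and using arguments similar to that in the proof of Proposition~\ref{u->c}'', and your argument is precisely that — forgetting uniqueness for (1) and (2), instantiating the uniformity clause at $\mathsf{id}_I$ for (3) and (4), and instantiating at the maps $\{\star\}\to I$ for (5), together with the (correct) observation that a $\times$-functional family over a singleton index is the same thing as a $\times$-disjoint family, so that a singleton family is $\mathbf{wfSK}$ exactly when its unique member is indecomposable.
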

Moreover with a proof analogous to that of Proposition \ref{carusk} one can easily prove that:
\begin{proposition}\label{reind}
Let $(a_{i})_{i\in I}$ be a family of elements of $\mathbb{A}$. Then the following are equivalent:
\begin{enumerate}
\item the family $(a_{i})_{i\in I}$ is $\mathbf{U\mbox{-}wfSK}$ (\,$\mathbf{U\mbox{-}fSK}$ respetively);
\item for every family of families of families $(((b^k_j)_{j\in J})_{k\in K_i})_{i\in I}$ of elements of $\mathcal{A}$ with $(K_i)_{i\in I}$ a family of pairwise disjoint sets and $(b^k_j)_{k\in \bigcup_{i\in I}K_i, j\in J}$ $\times$-functional, such that
$$\bigwedge_{i\in I}(a_i\rightarrow \bigwedge_{k\in K_i}\bigexists_{j\in J}b^k_j)\in \Sigma$$
there exists a (respectively unique) function $g:(\Pi k\in \bigcup_{i\in I}K_i)\rightarrow J$ such that
$$\bigwedge_{i\in I}(a_i\rightarrow \bigwedge_{k\in K_i}b^{k}_{g(k)})\in \Sigma $$
\end{enumerate}
\end{proposition}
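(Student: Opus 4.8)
The plan is to run the same two-directional argument used in the proof of Proposition~\ref{carusk}, now additionally carrying the $\times$-functionality hypotheses and, in the $\mathbf{U\mbox{-}fSK}$ case, the uniqueness of the witnessing function. The only new ingredient is a bookkeeping identity. Given a family $(K_i)_{i\in I}$ of pairwise disjoint sets with union $K:=\bigcup_{i\in I}K_i$, let $f\colon K\to I$ be the function sending $k\in K_i$ to $i$ (well defined by disjointness). Then, for any family $(c^k)_{k\in K}$ of elements of $\mathcal A$, the axiom $a\to\bigwedge_{b\in B}b=\bigwedge_{b\in B}(a\to b)$ together with the fact that each $k\in K$ lies in exactly one $K_i$ gives the literal equality
$$\bigwedge_{i\in I}\Bigl(a_i\to\bigwedge_{k\in K_i}c^k\Bigr)=\bigwedge_{k\in K}\bigl(a_{f(k)}\to c^k\bigr)$$
in $\mathcal A$; this is exactly the manipulation performed silently in Proposition~\ref{carusk}. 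Instantiating $c^k=\bigexists_{j\in J}b^k_j$ and $c^k=b^k_{g(k)}$ then lets me move freely between the ``$I$-indexed'' statements appearing in $(2)$ and the ``$K$-indexed'' statements coming from the definitions of $\mathbf{U\mbox{-}wfSK}$ and $\mathbf{U\mbox{-}fSK}$, with membership in $\Sigma$ preserved automatically since the two sides are the same element.

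For $(1)\Rightarrow(2)$, start from data as in $(2)$ with $\bigwedge_{i\in I}\bigl(a_i\to\bigwedge_{k\in K_i}\bigexists_{j\in J}b^k_j\bigr)\in\Sigma$; the identity rewrites this as $\bigwedge_{k\in K}\bigl(a_{f(k)}\to\bigexists_{j\in J}b^k_j\bigr)\in\Sigma$. Since $(a_i)_{i\in I}$ is $\mathbf{U\mbox{-}wfSK}$ (resp.\ $\mathbf{U\mbox{-}fSK}$), the reindexed family $(a_{f(k)})_{k\in K}$ is $\mathbf{wfSK}$ (resp.\ $\mathbf{fSK}$), and $(b^k_j)_{k\in K,\,j\in J}$ is $\times$-functional by hypothesis; hence there is $g\colon K\to J$, unique in the $\mathbf{U\mbox{-}fSK}$ case, with $\bigwedge_{k\in K}\bigl(a_{f(k)}\to b^k_{g(k)}\bigr)\in\Sigma$. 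Applying the identity once more rewrites this as $\bigwedge_{i\in I}\bigl(a_i\to\bigwedge_{k\in K_i}b^k_{g(k)}\bigr)\in\Sigma$, which is $(2)$.

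For $(2)\Rightarrow(1)$, fix an arbitrary $h\colon L\to I$; I must show $(a_{h(\ell)})_{\ell\in L}$ is $\mathbf{wfSK}$ (resp.\ $\mathbf{fSK}$). Given a $\times$-functional family $(c^\ell_j)_{\ell\in L,\,j\in J}$ with $\bigwedge_{\ell\in L}\bigl(a_{h(\ell)}\to\bigexists_{j\in J}c^\ell_j\bigr)\in\Sigma$, set $K_i:=h^{-1}(i)$ (pairwise disjoint with union $L$) and $b^k_j:=c^k_j$. This two-indexed family is $\times$-functional over $k\in L$ because it is literally $(c^\ell_j)_{\ell\in L,\,j\in J}$, and the identity gives $\bigwedge_{i\in I}\bigl(a_i\to\bigwedge_{k\in K_i}\bigexists_{j\in J}b^k_j\bigr)\in\Sigma$; so $(2)$ produces $g\colon L\to J$, unique in the $\mathbf{U\mbox{-}fSK}$ version, with $\bigwedge_{i\in I}\bigl(a_i\to\bigwedge_{k\in K_i}b^k_{g(k)}\bigr)\in\Sigma$, i.e.\ $\bigwedge_{\ell\in L}\bigl(a_{h(\ell)}\to c^\ell_{g(\ell)}\bigr)\in\Sigma$. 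This is exactly $\mathbf{wfSK}$ (resp.\ $\mathbf{fSK}$) for $(a_{h(\ell)})_{\ell\in L}$, hence $\mathbf{U\mbox{-}wfSK}$ (resp.\ $\mathbf{U\mbox{-}fSK}$) for $(a_i)_{i\in I}$.

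I do not foresee a genuine obstacle, as the proof is structurally identical to that of Proposition~\ref{carusk}. The two points needing a moment's care are that $\times$-functionality transports verbatim along the correspondence (the underlying two-indexed family of elements of $\mathcal A$, hence the condition $\bigwedge(\,\cdot\,\times\,\cdot\,\to\delta_J)\in\Sigma$, is unchanged), and that uniqueness is preserved in the $\mathbf{U\mbox{-}fSK}$ case, which holds because the passage between the $I$-indexed and $K$-indexed formulations is a bijection on candidate witnesses $K\to J$.
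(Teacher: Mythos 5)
Your proof is correct and is essentially the paper's own argument: the paper gives no explicit proof, stating only that the result follows "with a proof analogous to that of Proposition~\ref{carusk}", and your write-up is exactly that analogy carried out, with the additional (correct) checks that $\times$-functionality is literally preserved under the reindexing and that the passage between the $I$-indexed and $K$-indexed formulations is a bijection on candidate witnesses, so uniqueness transports in the $\mathbf{U\mbox{-}fSK}$ case.
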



\begin{proposition}\label{sk->fsk} If $(a_{i})_{i\in I}$ is $\mathbf{SK}$, then it is $\mathbf{fSK}$.
\end{proposition}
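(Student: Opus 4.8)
The plan is to split the claim into an existence and a uniqueness statement. Existence is immediate: given a $\times$-functional family $(b^i_j)_{i\in I,\,j\in J}$ with $\bigwedge_{i\in I}\bigl(a_i\to\bigexists_{j\in J}b^i_j\bigr)\in\Sigma$, I would apply the $\mathbf{SK}$ hypothesis to the family of families $((b^i_j)_{j\in J})_{i\in I}$ with constant index sets $J_i:=J$; its premise is exactly the displayed assumption, so there is $f\in\prod_{i\in I}J$, i.e.\ a function $f\colon I\to J$, with $\bigwedge_{i\in I}\bigl(a_i\to b^i_{f(i)}\bigr)\in\Sigma$.

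For uniqueness, suppose $f,g\colon I\to J$ both satisfy this last condition; set $\alpha:=\bigwedge_{i\in I}(a_i\to b^i_{f(i)})$, $\beta:=\bigwedge_{i\in I}(a_i\to b^i_{g(i)})$ and let $D:=\bigwedge_{i\in I}\bigwedge_{j,j'\in J}(b^i_j\times b^i_{j'}\to\delta_J(j,j'))$ be the $\times$-functionality witness, so $\alpha,\beta,D\in\Sigma$. The crucial step is to prove $\bigwedge_{i\in I}\bigl(a_i\to\delta_J(f(i),g(i))\bigr)\in\Sigma$. This follows from the calculus of $\mathbb{A}$: for each $i$ the pure $\lambda$-term $\lambda x.\,w_3\bigl(\lambda z.\,z(w_1x)(w_2x)\bigr)$ admits the derivation
\[ w_1:\alpha,\ w_2:\beta,\ w_3:D\ \vdash\ \lambda x.\,w_3\bigl(\lambda z.\,z(w_1x)(w_2x)\bigr)\ :\ a_i\to\delta_J(f(i),g(i)), \]
since from $x:a_i$ one gets $w_1x:b^i_{f(i)}$ and $w_2x:b^i_{g(i)}$, hence $\lambda z.\,z(w_1x)(w_2x):b^i_{f(i)}\times b^i_{g(i)}$, and then the instance $w_3:b^i_{f(i)}\times b^i_{g(i)}\to\delta_J(f(i),g(i))$ of $D$ yields the conclusion. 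Since the term is independent of $i$, it also has type $\bigwedge_{i\in I}(a_i\to\delta_J(f(i),g(i)))$; being pure and applied to parameters $\alpha,\beta,D\in\Sigma$, this forces $\bigwedge_{i\in I}(a_i\to\delta_J(f(i),g(i)))\in\Sigma$.

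Now I would recognise this element as another instance of the premise of $\mathbf{SK}$: take the family of families with $J'_i:=\{\star\}$ and $c^i_\star:=\top$ whenever $f(i)=g(i)$, and $J'_i:=\emptyset$ whenever $f(i)\neq g(i)$. By construction $\bigexists_{j\in J'_i}c^i_j=\delta_J(f(i),g(i))$ for every $i$ — indeed $\bigexists_{j\in\{\star\}}\top=\bigwedge_{x\in\mathcal{A}}((\top\to x)\to x)$ while the empty existential is $\top\to\bot$ — so $\bigwedge_{i\in I}(a_i\to\bigexists_{j\in J'_i}c^i_j)=\bigwedge_{i\in I}(a_i\to\delta_J(f(i),g(i)))\in\Sigma$. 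Hence $\mathbf{SK}$ provides some $h\in\prod_{i\in I}J'_i$; in particular this product is non-empty, so no $J'_i$ is empty, i.e.\ $f(i)=g(i)$ for every $i$, and $f=g$. The only genuinely delicate point is this uniqueness argument: the tempting componentwise route — deducing from $f(i)\neq g(i)$ that $a_i\equiv_\Sigma\bot$ and then contradicting $\mathbf{SK}$ — is awkward because $\Sigma$ need not be closed under binary meets, so a single offending index does not obviously give a violation of $\mathbf{SK}$; routing everything through the global predicate $\bigwedge_{i\in I}(a_i\to\delta_J(f(i),g(i)))$ and exploiting the emptiness of the product is what makes it go through cleanly.
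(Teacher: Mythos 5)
Your proof is correct and follows essentially the same route as the paper's: existence comes from applying $\mathbf{SK}$ to the constant-index family $J_i:=J$ (i.e.\ $\mathbf{SK}\Rightarrow\mathbf{wfSK}$), and uniqueness is obtained by pairing the two trackers, invoking $\times$-functionality to get $\bigwedge_{i\in I}(a_i\rightarrow \delta_J(f(i),g(i)))\in\Sigma$, reading $\delta_J(f(i),g(i))$ as $\bigexists_{j=f(i)=g(i)}\top$, and applying $\mathbf{SK}$ once more so that the nonemptiness of the resulting choice-function product forces $f=g$. The only difference is that you make the $\lambda$-term derivation and the final $\mathbf{SK}$ instance explicit where the paper leaves them implicit.
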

\begin{proof}
By definition if a family is $\mathbf{SK}$ then it is $\mathbf{wfSK}$. In order to conclude we need to show that if $(a_i)_{i\in I}$ is $\mathbf{SK}$, $(b^{i}_j)_{i\in I,j\in J}$ is an $\times$-functional family and $f,g:I\rightarrow J$ are such that 
$$\bigwedge_{i\in I}(a_i\rightarrow b^i_{f(i)})\in \Sigma\textrm{ and }\bigwedge_{i\in I}(a_i\rightarrow b^i_{g(i)})\in \Sigma$$
then $f=g$.
But from the assumption above we get
$$\bigwedge_{i\in I}(a_i\rightarrow b^{i}_{f(i)}\times b^{i}_{g(i)})\in \Sigma$$
from which it follows by the $\times$-functionality  that 
$$\bigwedge_{i\in I}(a_i\rightarrow \delta_{J}(f(i),g(i)))\in \Sigma.$$
This means that 
$$\bigwedge_{i\in I}(a_i\rightarrow \bigexists_{j= f(i)=g(i)}\top)\in \Sigma$$
By the hypothesis of supercompactness we get that $f(i)=g(i)$ for every $i\in I$. Thus $f=g$. 

\end{proof}

From Propositions \ref{sk->fsk} it immediately that:

\begin{corollary} For a family  $(a_{i})_{i\in I}$ of elements of $\mathbb{A}$ we have that   $\mathbf{U\mbox{-}SK}\Rightarrow\mathbf{U\mbox{-}fSK}$.
\end{corollary}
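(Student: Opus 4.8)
The plan is to unwind the two uniform notions and reduce directly to \Cref{sk->fsk}. Recall that $(a_i)_{i\in I}$ is $\mathbf{U\mbox{-}SK}$ precisely when, for every set $K$ and every function $f\colon K\to I$, the reindexed family $(a_{f(k)})_{k\in K}$ is $\mathbf{SK}$; and $(a_i)_{i\in I}$ is $\mathbf{U\mbox{-}fSK}$ precisely when every such $(a_{f(k)})_{k\in K}$ is $\mathbf{fSK}$. So the claim is just the assertion that the implication ``$\mathbf{SK}\Rightarrow\mathbf{fSK}$'' propagates through an arbitrary reindexing, which it does trivially since it holds fibrewise.

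Concretely, I would argue as follows. Let $(a_i)_{i\in I}$ be $\mathbf{U\mbox{-}SK}$ and fix an arbitrary function $f\colon K\to I$. By the definition of $\mathbf{U\mbox{-}SK}$, the family $(a_{f(k)})_{k\in K}$ is $\mathbf{SK}$. Applying \Cref{sk->fsk} to this family, we conclude that $(a_{f(k)})_{k\in K}$ is $\mathbf{fSK}$. Since $f$ was arbitrary, every reindexing of $(a_i)_{i\in I}$ is $\mathbf{fSK}$, i.e.\ $(a_i)_{i\in I}$ is $\mathbf{U\mbox{-}fSK}$.

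There is essentially no obstacle here: the content is entirely in \Cref{sk->fsk} (which supplies both the existence half, inherited from $\mathbf{wfSK}$, and the uniqueness half, obtained via $\times$-functionality together with \Cref{equalitygen}), and the corollary only observes that this pointwise implication is compatible with the $(-)_{f(k)}$ reindexing built into the ``uniform'' prefix. One could state it even more tersely by noting that both $\mathbf{U\mbox{-}SK}$ and $\mathbf{U\mbox{-}fSK}$ are, by construction, the closures of $\mathbf{SK}$ and $\mathbf{fSK}$ under reindexing, so the inclusion $\mathbf{SK}\subseteq\mathbf{fSK}$ of \Cref{sk->fsk} immediately gives $\mathbf{U\mbox{-}SK}\subseteq\mathbf{U\mbox{-}fSK}$.
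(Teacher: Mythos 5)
Your proof is correct and is essentially the paper's argument: the paper derives this corollary as an immediate consequence of Proposition \ref{sk->fsk}, exactly by observing that both uniform notions are defined through reindexing along arbitrary functions $f\colon K\to I$, so the pointwise implication $\mathbf{SK}\Rightarrow\mathbf{fSK}$ transfers to the uniform versions. Nothing is missing.
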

Moreover, trivially one has $\mathbf{cSK}\Rightarrow \mathbf{cInd}$.


\begin{remark}\label{cInd->nmin}
Notice that, by \Cref{nminfSK}, we have that if $(a_i)_{i\in I}$ is $\mathbf{cInd}$, then $a_i\not\equiv_{\Sigma}\bot$ for every $i\in I$.
\end{remark}

\begin{proposition}\label{nminwfsk} If $(a_i)_{i\in I}$ is $\mathbf{wfSK}$ and $a_i\not\equiv_{\Sigma}\bot$ for every $i\in I$, then $(a_i)_{i\in I}$ is $\mathbf{fSK}$.
\end{proposition}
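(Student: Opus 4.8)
The plan is to note that the definition of $\mathbf{wfSK}$ already provides the existence of a witnessing function $f$, so the only part of $\mathbf{fSK}$ that remains to be established is its \emph{uniqueness}; this will be obtained by the same $\times$-functionality manipulation used inside the proof of Proposition~\ref{sk->fsk}, with the hypothesis $a_i\not\equiv_{\Sigma}\bot$ taking over the role that supercompactness played there.

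Concretely, let $(b^i_j)_{i\in I,j\in J}$ be a $\times$-functional family with $\bigwedge_{i\in I}(a_i\rightarrow\bigexists_{j\in J}b^i_j)\in\Sigma$, and suppose $f,g\colon I\rightarrow J$ both satisfy $\bigwedge_{i\in I}(a_i\rightarrow b^i_{f(i)})\in\Sigma$ and $\bigwedge_{i\in I}(a_i\rightarrow b^i_{g(i)})\in\Sigma$. First I would pair these using the calculus of $\mathbb{A}$ to obtain $\bigwedge_{i\in I}(a_i\rightarrow b^i_{f(i)}\times b^i_{g(i)})\in\Sigma$; then, composing with the $\times$-functionality condition $\bigwedge_{i\in I}\bigwedge_{j,j'\in J}(b^i_j\times b^i_{j'}\rightarrow\delta_J(j,j'))\in\Sigma$, I would get $\bigwedge_{i\in I}(a_i\rightarrow\delta_J(f(i),g(i)))\in\Sigma$. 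Fixing any $i\in I$ and using that $\Sigma$ is upward closed gives $a_i\rightarrow\delta_J(f(i),g(i))\in\Sigma$; if $f(i)\neq g(i)$ this says $a_i\rightarrow(\top\rightarrow\bot)\in\Sigma$, and applying it against the canonical realizer of $\top$ yields $a_i\rightarrow\bot\in\Sigma$, i.e.\ $a_i\equiv_{\Sigma}\bot$, contradicting the hypothesis. Hence $f(i)=g(i)$ for every $i\in I$, so $f=g$.

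The argument is essentially routine: the only points needing a line of verification are that the pairing step and the implication $a\rightarrow(\top\rightarrow\bot)\in\Sigma\Rightarrow a\equiv_{\Sigma}\bot$ are legitimate, but both are direct instances of the derivation rules recalled above (the rule producing $a\times b$ from derivations of $a$ and of $b$, and function application against a realizer of $\top$). I do not expect any genuine obstacle, since the whole proof is a mild variation of the one given for Proposition~\ref{sk->fsk}.
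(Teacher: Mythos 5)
Your proof is correct and follows essentially the same route as the paper's: both take two witnessing functions $f,g$, pair them to get $\bigwedge_{i\in I}(a_i\rightarrow b^i_{f(i)}\times b^i_{g(i)})\in\Sigma$, apply $\times$-functionality to land on $\delta_J(f(i),g(i))$, and conclude $a_i\equiv_\Sigma\bot$ at any index where $f$ and $g$ disagree, contradicting the hypothesis. The only cosmetic difference is that the paper phrases the argument contrapositively (wfSK but not fSK forces some $a_{\overline{i}}\equiv_\Sigma\bot$) while you argue uniqueness directly.
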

\begin{proof}
If $(a_{i})_{i\in I}$ is $\mathbf{wfSK}$ but not $\mathbf{fSK}$, then there exists $((b^{i}_{j})_{j\in J_i})_{i\in I}$ $\times$-functional and two distinct functions $f,g\in (\Pi i\in I)J_i$ satisfying 
$$\bigwedge_{i\in I}(a_i\rightarrow b^i_{f(i)})\in \Sigma\textrm{ and }\bigwedge_{i\in I}(a_i\rightarrow b^i_{g(i)})\in \Sigma.$$
Then
$$\bigwedge_{i\in I}(a_i\rightarrow b^i_{f(i)}\times b^{i}_{g(i)})\in \Sigma$$
and using the $\times$-functionality we get

$$\bigwedge_{i\in I}(a_i\rightarrow \delta_{J_i}(f(i),g(i)))\in \Sigma$$
If $\overline{i}\in I$ is such that $f(\overline{i})\neq g(\overline{i})$, then
$a_{\overline{i}}\rightarrow (\top\rightarrow \bot)\in \Sigma$. 
From this it follows that $a_{\overline{i}}\equiv_{\Sigma}\bot$.


\end{proof}

\begin{corollary}\label{cor_UwfSK_iff_UfSK}
A family $(a_i)_{i\in I}$ is $\mathbf{U\mbox{-}wfSK}$ if and only if it is  $\mathbf{U\mbox{-}fSK}$.
\end{corollary}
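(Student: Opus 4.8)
The plan is to prove only the nontrivial implication, since $\mathbf{U\mbox{-}fSK}\Rightarrow\mathbf{U\mbox{-}wfSK}$ is already recorded in Proposition~\ref{fsk->wfsk}. So I would assume that $(a_i)_{i\in I}$ is $\mathbf{U\mbox{-}wfSK}$, and aim to show that $(a_{f(k)})_{k\in K}$ is $\mathbf{fSK}$ for every function $f\colon K\to I$, which is exactly what $\mathbf{U\mbox{-}fSK}$ asks for.

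First I would fix such an $f\colon K\to I$. By the very definition of $\mathbf{U\mbox{-}wfSK}$, the reindexed family $(a_{f(k)})_{k\in K}$ is $\mathbf{wfSK}$. The idea is then to upgrade this to $\mathbf{fSK}$ by invoking Proposition~\ref{nminwfsk}, whose only additional hypothesis is that no component of the family is $\equiv_{\Sigma}$-equivalent to $\bot$.

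To discharge that hypothesis I would use uniformity itself: by Proposition~\ref{fsk->wfsk} a $\mathbf{U\mbox{-}wfSK}$ family is in particular $\mathbf{cInd}$, hence by Remark~\ref{cInd->nmin} we get $a_i\not\equiv_{\Sigma}\bot$ for every $i\in I$; since $f(k)\in I$, this gives $a_{f(k)}\not\equiv_{\Sigma}\bot$ for every $k\in K$. Proposition~\ref{nminwfsk} now applies to $(a_{f(k)})_{k\in K}$ and yields that it is $\mathbf{fSK}$. As $f$ was arbitrary, $(a_i)_{i\in I}$ is $\mathbf{U\mbox{-}fSK}$, which closes the argument.

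I do not expect a genuine obstacle: the statement is essentially a bookkeeping combination of Propositions~\ref{fsk->wfsk} and~\ref{nminwfsk} with Remark~\ref{cInd->nmin}. The one conceptual point worth emphasising is why the equivalence can fail at the non-uniform level, i.e.\ why $\mathbf{wfSK}$ alone does not imply $\mathbf{fSK}$: a component $\equiv_{\Sigma}$-equivalent to $\bot$ destroys the uniqueness of the witnessing function, and it is precisely the stability under reindexing built into $\mathbf{U\mbox{-}wfSK}$ --- through the indecomposability of every component --- that excludes this degenerate case.
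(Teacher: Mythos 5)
Your proposal is correct and follows essentially the same route as the paper: the easy direction is Proposition~\ref{fsk->wfsk}, and for the converse you use the implication $\mathbf{U\mbox{-}wfSK}\Rightarrow\mathbf{cInd}$ together with Remark~\ref{cInd->nmin} to rule out components $\equiv_\Sigma$-equivalent to $\bot$, and then apply Proposition~\ref{nminwfsk} to each reindexed family. This matches the paper's argument step for step.
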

\begin{proof} We already know that by definition $\mathbf{U}${\bf-}$\mathbf{fSK}$ implies $\mathbf{U}${\bf-}$\mathbf{wfSK}$ (see \Cref{fsk->wfsk}). 
Assume now a family $(a_i)_{i\in I}$ to be $\mathbf{U\mbox{-}wfSK}$. Then, combining the last point of \Cref{u->c} with \Cref{cInd->nmin}, we get that $a_i\not \equiv_\Sigma \bot$ for every $i\in I$.
By definition every family $(a_{f(j)})_{j\in J}$ with $f:J\rightarrow I$ is $\mathbf{wfSK}$. Using Proposition \ref{nminwfsk} one gets that each one of these families is $\mathbf{fSK}$. Thus $(a_i)_{i\in I}$ is $\mathbf{U\mbox{-}fSK}$.

\end{proof}

We can summarize the relation between the different properties of families in the general case as follows:

$$\xymatrix{
    &\mathbf{U\mbox{-}SK} \ar[ld]\ar[d]\ar[rd]  &\\
\mathbf{cSK} \ar[d]   &\mathbf{U\mbox{-}fSK}\equiv \mathbf{U\mbox{-}wfSK}\ar[ld]\ar[rd]    &\mathbf{SK}\ar[d]\\
\mathbf{cInd}    &    &\mathbf{fSK}\ar[d]\\
    &    &\mathbf{wfSK}\\
}
$$

\subsection{Supercompactness and indecomposability in particular classes of implicative algebras}

In this section we analyze the notions of supercompact and indecomposable elements in some particular cases. We start by considering the case of an implicative algebra \emph{compatible with joins}.
\subsubsection*{Compatibility with joins}

\begin{definition}\label{def_IA_comp_joins}
An implicative algebra $\mathbb{A}=(\mathcal{A},\leq,\to,\Sigma)$ is \textbf{compatible with joins} if for every family $(a_i)_{i\in I}$ of its elements and every $b\in \mathcal{A}$ we have that
$$\bigwedge_{i\in I}(a_i\rightarrow b)=\bigvee_{i\in I}a_i\rightarrow b.$$
\end{definition}
For implicative algebras compatible with joins we have the following useful properties, \cite[Prop. 3.32]{miquel_2020}:
\begin{enumerate}
\item $\bot\rightarrow a=\top$
\item $a\times \bot=\bot\times a=\top\rightarrow \bot$
\end{enumerate}
Moreover, using the calculus in \cite{miquel_2020}
one can easily prove that
\begin{lemma}\label{existentialcwj} If $\mathcal{F}$ is the class of set-indexed families of elements of an implicative algebra $\mathbb{A}$ which is compatible with joins, then
$$\bigwedge_{(b_i)_{i\in I}\in \mathcal{F}}\left(\bigvee_{i\in I}b_i\rightarrow \bigexists_{i\in I}b_i\right)\in \Sigma$$
\end{lemma}

\begin{remark}\label{rem_IA_comp_joins_exists_and_V}
    If we consider this property in combination with \Cref{existentialgen} we get that we can substitute $\bigexists$ with $\bigvee$ in logical calculations when we are dealing with an implicative algebra compatible with joins, as shown in \cite[p.490]{miquel_2020}.
\end{remark}

\begin{lemma}\label{equalitycwj}
If $\mathbb{A}$ is compatible with joins, then 
$$\bigwedge_{J\,set}\bigwedge_{j,j'\in J}\left(d_J(j,j')\rightarrow \delta_{J}(j,j')\right)\in \Sigma$$
\end{lemma}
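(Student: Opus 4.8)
The plan is to evaluate $d_J(j,j')\rightarrow\delta_J(j,j')$ separately on the diagonal ($j=j'$) and off the diagonal ($j\neq j'$), show the meet collapses to a single element, and then verify that element lies in $\Sigma$ using the $\lambda$-calculus of Section~2.

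First I would treat the off-diagonal case $j\neq j'$. Here $d_J(j,j')=\bot$ and $\delta_J(j,j')=\top\rightarrow\bot$, so $d_J(j,j')\rightarrow\delta_J(j,j')=\bot\rightarrow(\top\rightarrow\bot)$. Since $\mathbb{A}$ is compatible with joins, by the cited property $\bot\rightarrow a=\top$ for every $a\in\mathcal{A}$ (this is the instance of the defining equation for the empty family, $\bigwedge_{i\in\emptyset}(a_i\rightarrow a)=\bigvee_{i\in\emptyset}a_i\rightarrow a$), hence the term equals $\top$. This is the only place where compatibility with joins is used.

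Next, the diagonal case $j=j'$: now $d_J(j,j)=\top$ and $\delta_J(j,j)=\bigwedge_{x\in\mathcal{A}}((\top\rightarrow x)\rightarrow x)$, so the term is $\top\rightarrow\bigwedge_{x\in\mathcal{A}}((\top\rightarrow x)\rightarrow x)$, independently of $J$ and $j$. Taking the meet over all sets $J$ and all pairs $j,j'\in J$ — with the empty set contributing the empty meet $\top$ — the whole expression therefore equals $\top\wedge\bigl(\top\rightarrow\bigwedge_{x\in\mathcal{A}}((\top\rightarrow x)\rightarrow x)\bigr)=\top\rightarrow\bigwedge_{x\in\mathcal{A}}((\top\rightarrow x)\rightarrow x)$.

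Finally I would show this last element belongs to $\Sigma$ using the calculus. The pure closed $\lambda$-term $\lambda z.\lambda w.wz$ works: from $w:\top\rightarrow x,\ z:\top$ one derives $wz:x$ by the application rule, hence $z:\top\vdash\lambda w.wz:(\top\rightarrow x)\rightarrow x$ for every $x$, hence $z:\top\vdash\lambda w.wz:\bigwedge_{x\in\mathcal{A}}((\top\rightarrow x)\rightarrow x)$ by the $\bigwedge$-introduction rule, and finally $\vdash\lambda z.\lambda w.wz:\top\rightarrow\bigwedge_{x\in\mathcal{A}}((\top\rightarrow x)\rightarrow x)$. Since $\lambda z.\lambda w.wz$ is a pure $\lambda$-term with no free variables, its interpretation lies in $\Sigma$, and as $\Sigma$ is upward closed the claim follows. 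There is no genuine obstacle here; the only point requiring attention is to see that compatibility with joins is needed purely to make the off-diagonal terms trivialise — the diagonal term $\top\rightarrow\delta_J(j,j)$ is in $\Sigma$ in every implicative algebra.
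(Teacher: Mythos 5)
Your proof is correct and follows essentially the same route as the paper: split the meet into diagonal and off-diagonal terms, use $\bot\rightarrow a=\top$ (from compatibility with joins) to kill the off-diagonal contribution $\bot\rightarrow(\top\rightarrow\bot)$, and reduce to showing $\top\rightarrow\bigwedge_{x\in\mathcal{A}}((\top\rightarrow x)\rightarrow x)\in\Sigma$ via the calculus. The only difference is that you make explicit the witnessing term $\lambda z.\lambda w.wz$, which the paper leaves as "easily shown".
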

\begin{proof}
$$\bigwedge_{J\,set}\bigwedge_{j,j'\in J}\left(d_J(j,j')\rightarrow \delta_{J}(j,j')\right)=(\top\rightarrow \bigwedge_{c\in \mathcal{A}}((\top\rightarrow c)\rightarrow c))\wedge (\bot\rightarrow (\top\rightarrow \bot))=$$
$$(\top\rightarrow \bigwedge_{c\in \mathcal{A}}((\top\rightarrow c)\rightarrow c))$$
and this can be easily shown to be in $\Sigma$ using the calculus.
\end{proof}
If we consider this property in combination with Proposition \ref{equalitygen} we get that we can substitute $\delta_{J}$ with $d_J$ in logical calculations when we are dealing with an implicative algebra compatible with joins.
\begin{proposition}
Let $\mathbb{A}$ be compatible with joins.
If $(a_i)_{i\in I}$ is $\mathbf{SK}$, then it is $\mathbf{cSK}$.
\end{proposition}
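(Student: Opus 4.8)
The plan is to fix an arbitrary $i_0\in I$ and to prove directly that $a_{i_0}$ is supercompact; since $i_0$ is arbitrary this yields $\mathbf{cSK}$. So let $(c_j)_{j\in J}$ be a set-indexed family with $a_{i_0}\to\bigexists_{j\in J}c_j\in\Sigma$, and let us produce $\overline{j}\in J$ with $a_{i_0}\to c_{\overline{j}}\in\Sigma$. The idea is to test the $\mathbf{SK}$-property of $(a_i)_{i\in I}$ against the family of families which uses $(c_j)_{j\in J}$ at the index $i_0$ and a trivial one-element family everywhere else, arranged so that a \emph{single} realizer serves every component of the meet occurring in the $\mathbf{SK}$-hypothesis. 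This is the only place where compatibility with joins is used: $\Sigma$ is in general not closed under meets, so a componentwise argument would not produce an element of $\Sigma$.

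Concretely, I would first pass from $\bigexists$ to $\bigvee$ twice. By \Cref{existentialgen} there is one element of $\Sigma$ below $\bigexists_{k\in K}d_k\to\bigvee_{k\in K}d_k$ for every set-indexed family $(d_k)_{k\in K}$, so from the hypothesis we obtain $w:=\bigl(a_{i_0}\to\bigvee_{j\in J}c_j\bigr)\in\Sigma$; and by \Cref{existentialcwj} --- where compatibility with joins enters --- there is one element $e\in\Sigma$ below $\bigvee_{k\in K}d_k\to\bigexists_{k\in K}d_k$ for every set-indexed family. Take $J_{i_0}:=J$ with $b^{i_0}_j:=c_j$, and $J_i:=\{\star\}$ with $b^i_{\star}:=\top$ for $i\ne i_0$. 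Then the pure $\lambda$-term $\lambda x.\,e(wx)$, read with the two parameters $e,w\in\Sigma$, witnesses $\bigwedge_{i\in I}\bigl(a_i\to\bigexists_{j\in J_i}b^i_j\bigr)\in\Sigma$: for $i=i_0$ one has $wx:\bigvee_{j\in J}c_j$, hence $e(wx):\bigexists_{j\in J}c_j$; for $i\ne i_0$ one has $wx:w\cdot a_i\le\top=\bigvee_{j\in\{\star\}}\top$, hence $e(wx):\bigexists_{j\in\{\star\}}\top=\bigexists_{j\in J_i}b^i_j$. So the hypothesis of $\mathbf{SK}$ is met for this family of families.

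If $J\ne\emptyset$ then $(\Pi i\in I)J_i$ is non-empty, so $\mathbf{SK}$ yields $f\in(\Pi i\in I)J_i$ with $\bigwedge_{i\in I}\bigl(a_i\to b^i_{f(i)}\bigr)\in\Sigma$; setting $\overline{j}:=f(i_0)\in J_{i_0}=J$ and using $\bigwedge_{i\in I}\bigl(a_i\to b^i_{f(i)}\bigr)\le a_{i_0}\to b^{i_0}_{\overline{j}}=a_{i_0}\to c_{\overline{j}}$ together with upward closure of $\Sigma$ gives $a_{i_0}\to c_{\overline{j}}\in\Sigma$, as required. The degenerate case $J=\emptyset$ is subsumed: running the same construction with $J_{i_0}:=\emptyset$ makes $(\Pi i\in I)J_i=\emptyset$, so the witness above contradicts $\mathbf{SK}$, forcing $a_{i_0}\to\bigexists_{j\in\emptyset}c_j\notin\Sigma$, i.e.\ the supercompactness condition is vacuous for $J=\emptyset$. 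Hence $a_{i_0}$ is supercompact for every $i_0$, i.e.\ $(a_i)_{i\in I}$ is $\mathbf{cSK}$. The step I expect to need the most care is precisely the claim that $\lambda x.\,e(wx)$ realizes all components of $\bigwedge_{i\in I}\bigl(a_i\to\bigexists_{j\in J_i}b^i_j\bigr)$ simultaneously: it rests on the ``escape'' consequents at $i\ne i_0$ being joins equal to $\top$, which is exactly what compatibility with joins buys beyond \Cref{existentialcwj}.
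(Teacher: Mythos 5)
Your proof is correct and follows essentially the same route as the paper's: you test $\mathbf{SK}$ against the same family of families ($J$ at the chosen index, a singleton index set with value $\top$ elsewhere) and use compatibility with joins, via \Cref{existentialgen} and \Cref{existentialcwj}, to pass between $\bigexists$ and $\bigvee$ so that the meet in the $\mathbf{SK}$-hypothesis lands in $\Sigma$. The only cosmetic difference is that you exhibit an explicit realizer $\lambda x.\,e(wx)$, whereas the paper simply observes that after replacing $\bigexists$ by $\bigvee$ the components at $i\neq\overline{i}$ equal $\top$, so the whole meet collapses to $a_{\overline{i}}\rightarrow\bigvee_{j\in J}b_j$.
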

\begin{proof}
Recall from \Cref{rem_IA_comp_joins_exists_and_V} that in an implicative algebra compatible with joins we can substitute $\bigexists$ with $\bigvee$. Now assume that 
$$a_{\overline{i}}\rightarrow \bigexists_{j\in J}b_j\in \Sigma$$
for some $\overline{i}\in I$ and some family $(b_j)_{j\in J}$ of elements of $\mathcal{A}$. 
Let us now consider a family of families
$((c^i_j)_{j\in J_i})_{i\in I}$ of elements of $\mathcal{A}$ defined as follows:
\begin{enumerate}
\item $J_{\overline{i}}=J$ and $J_i=\{\star\}$ if $i\neq \overline{i}$;
\item $c^{\overline{i}}_j:=b_j$, while $c^i_{\star}=\top$.
\end{enumerate}
Since $\mathbb{A}$ is compatible with joins we have that 
$$\bigwedge_{i\in I}(a_{i}\rightarrow \bigexists_{j\in J_i}c^i_j)\in \Sigma\iff\bigwedge_{i\in I}(a_{i}\rightarrow \bigvee_{j\in J_i}c^i_j)\in \Sigma.$$
But  
$$\bigwedge_{i\in I}(a_{i}\rightarrow \bigvee_{j\in J_i}c^i_j)=a_{\overline{i}}\rightarrow \bigvee_{j\in J}b_j $$
and this is an element of $\Sigma$ because $\mathbb{A}$ is compatible with joins. 
Since  $(a_{i})_{i\in I}$ is supercompact by hypothesis we can conclude that there exists a function $f\in (\Pi i\in I)J_i$ such that 
$$\bigwedge_{i\in I}(a_{i}\rightarrow c^{i}_{f(i)})\in \Sigma$$
From this it follows that $f(\overline{i})\in J$ and $a_{\overline{i}}\rightarrow b_{f(\overline{i})}\in \Sigma$. Thus every $a_{\overline{i}}$ is supercompact, i.e  $(a_i)_{i\in I}$ is $\mathbf{cSK}$.
\end{proof}
\begin{proposition}\label{nonbot}Let $\mathbb{A}$ be compatible with joins and $(a_i)_{i\in I}$ be a family of its elements. If  $(a_{i})_{i\in I}$ is $\mathbf{fSK}$, then $a_i\not\equiv_\Sigma \bot$ for every $i\in I$.
\end{proposition}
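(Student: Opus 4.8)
The plan is to argue by contradiction: suppose $(a_i)_{i\in I}$ is $\mathbf{fSK}$ and yet $a_{\overline{i}}\equiv_\Sigma\bot$ for some $\overline{i}\in I$. The key observation is that when $\mathbb{A}$ is compatible with joins we have $\bot\rightarrow c=\top$ for every $c$, so $a_{\overline{i}}\rightarrow c\in\Sigma$ for \emph{every} $c\in\mathcal{A}$ (using $a_{\overline{i}}\equiv_\Sigma\bot$ and that the relevant combinator $\lambda$-term connecting $a_{\overline{i}}\rightarrow\bot$ and $\bot\rightarrow c$ lands in $\Sigma$). In particular, $a_{\overline{i}}$ can be ``sent'' to any index of a $\times$-functional family without violating the entailment, which is exactly what destroys the uniqueness clause in the definition of $\mathbf{fSK}$.

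Concretely, I would build a $\times$-functional family $(b^i_j)_{i\in I,j\in J}$ with $J$ having at least two elements, say $J=\{0,1\}$, for which the $\mathbf{fSK}$ hypothesis applies but admits two distinct witnessing functions. Take $b^i_j:=\top$ for all $i\in I$ and all $j\in J$. First I would check this family is $\times$-functional: we need $\bigwedge_{i\in I}\bigwedge_{j,j'\in J}(b^i_j\times b^i_{j'}\rightarrow\delta_J(j,j'))\in\Sigma$. Since $b^i_j\times b^i_{j'}=\top\times\top$, and this is a closed $\lambda$-term expression, and $\delta_J(j,j)$ is (provably in $\Sigma$) above $\top\times\top$ while for $j\neq j'$ we would need $\top\times\top\rightarrow(\top\rightarrow\bot)\in\Sigma$ — which is \emph{not} generally true. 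So the constant family $\top$ is not $\times$-functional when $|J|\geq 2$; I need a genuinely ``separated'' family on the other indices.

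The fix is to use a $\times$-disjoint family on the $i\neq\overline{i}$ coordinates and exploit that compatibility with joins lets us collapse the $\overline{i}$-coordinate. Pick any $\times$-functional family $(c^i_j)_{i\in I,j\in J}$ with $|J|\geq2$ and with $\bigwedge_{i\in I}(a_i\rightarrow\bigexists_{j\in J}c^i_j)\in\Sigma$ for which there is a \emph{unique} witness $f_0\colon I\to J$ (for instance, for a fixed $i_1\neq\overline{i}$ set $c^{i_1}_0$ to be something with $a_{i_1}\rightarrow c^{i_1}_0\in\Sigma$ but $a_{i_1}\rightarrow c^{i_1}_1\equiv_\Sigma\bot$, e.g.\ exploiting $\times$-disjointness as in the Remark after Proposition \ref{prop_supercomp_implies_indecomposable}, and $c^i_j=\top$ otherwise); in fact it suffices to note such a family must exist because $a_{i_1}\not\equiv_\Sigma\bot$ is \emph{not} assumed — hmm, this is circular. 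Let me restructure: the cleanest route is to take $J=\{0,1\}$, $b^i_j=\top$ for $i\neq\overline{i}$, and $b^{\overline{i}}_0=\top$, $b^{\overline{i}}_1=\bot$. This family is $\times$-functional: for $i\neq\overline i$ the condition on $j\ne j'$ reads $\top\times\top\to(\top\to\bot)$, which still fails. So constant-$\top$ on \emph{any} coordinate with $|J|\ge2$ breaks $\times$-functionality, meaning I genuinely cannot have a $\times$-functional family that is ``trivial'' on a coordinate $a_i$ with $a_i\not\equiv_\Sigma\bot$ — and that is precisely the point: the only coordinates where a $\times$-functional family can have two indices both $\Sigma$-entailed are the $\equiv_\Sigma\bot$ ones. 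So I would take $I=\{\overline i\}$ WLOG-style reduction impossible since $\overline i$ is fixed inside $I$; instead keep all of $I$, put $b^{\overline i}_0=b^{\overline i}_1=\bot$ (both copies $\bot$; then the $j\ne j'$ condition at $\overline i$ is $\bot\times\bot\to(\top\to\bot)$, and $\bot\times\bot=\top\to\bot$ by the compatibility-with-joins identity, so this holds), and for $i\ne\overline i$ set $J$-indexed family $b^i_j=$ a single fixed element $a_i$ itself duplicated won't be $\times$-functional either.

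The genuine resolution: enlarge $J$ so that for $i\neq\overline i$ the family is supported on a single index. Let $J:=I$ (or $I\sqcup\{\ast\}$), and for $i\neq\overline i$ set $b^i_j:=\top$ if $j=i$ and $b^i_j:=\bot$ otherwise; this \emph{is} $\times$-functional on those coordinates (the off-diagonal condition is $\top\times\bot\to(\top\to\bot)$, and $\top\times\bot=\top\to\bot$). Then $a_i\rightarrow\bigexists_{j\in J}b^i_j\in\Sigma$ holds (realized via the index $j=i$), forcing any witness $f$ to satisfy $f(i)=i$ for $i\ne\overline i$ by the uniqueness argument in the proof of Proposition \ref{sk->fsk}. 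For the $\overline i$-coordinate, set $b^{\overline i}_j:=\bot$ for all $j\in J$: then $\bigexists_{j\in J}b^{\overline i}_j\equiv_\Sigma\bot$ and $a_{\overline i}\equiv_\Sigma\bot$ gives $a_{\overline i}\rightarrow\bigexists_{j\in J}b^{\overline i}_j\in\Sigma$, and also $a_{\overline i}\rightarrow b^{\overline i}_j\in\Sigma$ for \emph{every} $j\in J$. Check $\times$-functionality at $\overline i$: the condition is $b^{\overline i}_j\times b^{\overline i}_{j'}\to\delta_J(j,j')$, i.e.\ $\bot\times\bot\to(\top\to\bot)$ for $j\ne j'$, which holds since $\bot\times\bot=\top\to\bot$. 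So the whole family $(b^i_j)_{i\in I,j\in J}$ is $\times$-functional and satisfies the $\mathbf{fSK}$ premise, yet (assuming $J$ has at least two elements, which we may arrange, e.g.\ $J=I\sqcup\{\ast\}$) there are two distinct witness functions — one sending $\overline i\mapsto$ (some fixed index) and another sending $\overline i\mapsto$ (a different index), agreeing elsewhere — contradicting $\mathbf{fSK}$. The main obstacle, as the false starts above show, is choosing the auxiliary family so that it is simultaneously $\times$-functional \emph{and} trivial-enough at $\overline i$ to admit multiple witnesses; the diagonal trick $b^i_j=\top$ iff $j=i$ for $i\ne\overline i$, together with the identity $\top\times\bot=\bot\times\bot=\top\rightarrow\bot$ valid under compatibility with joins (Proposition-level fact recalled above Lemma \ref{existentialcwj}), is what makes it work.
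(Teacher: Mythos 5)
Your final construction is correct and follows the same strategy as the paper's proof: argue the contrapositive by producing a $\times$-functional family whose $\times$-functionality rests on the identities $a\times\bot=\bot\times a=\top\rightarrow\bot$ (valid under compatibility with joins) together with $(\top\rightarrow\bot)\rightarrow\bot\in\Sigma$, arrange the premise of $\mathbf{fSK}$ to hold using the substitution of $\bigexists$ by $\bigvee$, and exhibit two distinct witnesses that differ only at $\overline{i}$, where both choices are validated by $a_{\overline{i}}\rightarrow\bot\in\Sigma$. The only real difference is the choice of auxiliary family: the paper takes $J=\{0,1\}$ and the \emph{constant} family $b^i_0=\bot$, $b^i_1=\top$ for all $i\in I$, which is already $\times$-functional --- the pair $\{\bot,\top\}$ is precisely the option your false starts skipped when you ruled out constant-$\top$ --- and the two witnesses are then the constant function $1$ and the function equal to $1$ everywhere except at $\overline{i}$, where it is $0$. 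So your diagonal family over $J=I\sqcup\{\ast\}$ works but is more elaborate than necessary. One small caveat: your claim that any witness must satisfy $f(i)=i$ for $i\neq\overline{i}$ is neither needed nor true in general (some other component $a_i$ could also be $\equiv_\Sigma\bot$); all you actually use is that two distinct witnesses exist, which your construction does provide.
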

\begin{proof}
By \Cref{fsk->wfsk} we already know that $\mathbf{fSK}\Rightarrow \mathbf{wfSK}$.
%
%
Therefore, to prove the result it is enough to prove that if $(a_{i})_{i\in I}$ is $\mathbf{wfSK}$ and there exists $\overline{i}\in I$ such that $a_{\overline{i}}\equiv_{\Sigma}\bot$, then $(a_{i})_{i\in I}$ is not $\mathbf{fSK}$. 

Consider the family of families $((b^i_j)_{j\in \{0,1\}})_{i\in I}$ where $b^i_0=\bot$ and $b^i_1=\top$.
Then
$$ \bigwedge_{i\in I}(a_i\rightarrow \bigvee_{j\in \{0,1\}}b^i_j)=\bigwedge_{i\in I}(a_{i}\rightarrow \top)=\top\in \Sigma$$
and since $\mathbb{A}$ is compatible with joins $\bigwedge_{i\in I}(a_i\rightarrow \lar{\exists} _{j\in \{0,1\}}b^i_j)\in \Sigma$.

Moreover
$$\bigwedge_{i\in I}\bigwedge_{j,j'\in I}(b^{i}_{j}\times b^{i}_{j'}\rightarrow \delta(j,j'))=\bigwedge_{i\in I}\left(\bigwedge_{j\in I}(b^{i}_{j}\times b^{i}_{j}\rightarrow \delta(j,j'))\wedge \bigwedge_{j\neq j'\in I}(b^{i}_{j}\times b^{i}_{j'}\rightarrow \delta(j,j'))\right)=$$
$$(\bot\times \bot\rightarrow \top)\wedge (\top\times \top\rightarrow \top)\wedge (\bot\times \top\rightarrow \bot)\wedge (\top\times \bot\rightarrow \bot)=(\top\rightarrow \bot)\rightarrow \bot\in \Sigma $$

since in any implicative algebra compatible with joins $a\times \bot=\bot\times a=\top\rightarrow \bot$ and Lemma \ref{equalitycwj} holds.

Consider now two functions $f,g:I\rightarrow \{0,1\}$ where $f$ is the constant with value $1$ while $g(i)=1$ for every $i\in I$, but $\overline{i}$, for which we have $g(\overline{i})=0$.

We have
$$ \bigwedge_{i\in I}(a_i\rightarrow b^i_{f(i)})=\bigwedge_{i\in I}(a_{i}\rightarrow \top)=\top\in \Sigma$$
and 
$$ \bigwedge(a_i\rightarrow b^i_{g(i)})=\bigwedge_{i\neq \overline{i}\in I}(a_{i}\rightarrow \top)\wedge (a_{\overline{i}}\rightarrow \bot)=a_{\overline{i}}\rightarrow \bot\in \Sigma$$
This $(a_i)_{i\in I}$ is not $\mathbf{fSK}$.

\end{proof}
Combining \Cref{nonbot} with \Cref{nminwfsk} we obtain the following corollary:
\begin{corollary}
    Let $\mathbb{A}$ be compatible with joins and $(a_i)_{i\in I}$ be a family of its elements. Then  $(a_{i})_{i\in I}$ is $\mathbf{fSK}$ if and only if $(a_{i})_{i\in I}$ is $\mathbf{wfSK}$ and $a_i\not\equiv_\Sigma \bot$ for every $i\in I$.
\end{corollary}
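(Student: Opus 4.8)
The corollary follows by combining two results already established, so the plan is essentially to assemble them. Recall that we want to show: for an implicative algebra $\mathbb{A}$ compatible with joins and a family $(a_i)_{i\in I}$, the family is $\mathbf{fSK}$ if and only if it is $\mathbf{wfSK}$ and $a_i\not\equiv_\Sigma\bot$ for every $i\in I$.

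For the forward direction, suppose $(a_i)_{i\in I}$ is $\mathbf{fSK}$. By \Cref{fsk->wfsk} (first point), $\mathbf{fSK}$ implies $\mathbf{wfSK}$, so the family is $\mathbf{wfSK}$. By \Cref{nonbot}, since $\mathbb{A}$ is compatible with joins and $(a_i)_{i\in I}$ is $\mathbf{fSK}$, we get $a_i\not\equiv_\Sigma\bot$ for every $i\in I$. This gives both conjuncts of the right-hand side.

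For the converse direction, suppose $(a_i)_{i\in I}$ is $\mathbf{wfSK}$ and $a_i\not\equiv_\Sigma\bot$ for every $i\in I$. This is exactly the hypothesis of \Cref{nminwfsk}, which concludes that $(a_i)_{i\in I}$ is $\mathbf{fSK}$. Note that \Cref{nminwfsk} does not even require compatibility with joins, so this half is unconditional; only the forward direction uses the hypothesis through \Cref{nonbot}.

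There is no real obstacle here: the corollary is a packaging of \Cref{nonbot} and \Cref{nminwfsk}, with the trivial implication $\mathbf{fSK}\Rightarrow\mathbf{wfSK}$ from \Cref{fsk->wfsk} folded in. The only thing to be careful about is citing the right earlier results and making sure the hypothesis ``compatible with joins'' is invoked precisely where \Cref{nonbot} needs it. I would simply write the proof as two short paragraphs, one per direction, each a single citation chain.
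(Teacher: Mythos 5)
Your proof is correct and is exactly the argument the paper intends: the corollary is stated as "combining Proposition \ref{nonbot} with Proposition \ref{nminwfsk}," with the trivial implication $\mathbf{fSK}\Rightarrow\mathbf{wfSK}$ from Proposition \ref{fsk->wfsk} supplying the remaining conjunct. Your added observation that the converse direction does not actually need compatibility with joins is accurate and consistent with the hypotheses of Proposition \ref{nminwfsk}.
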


\begin{proposition}
Let $\mathbb{A}$ be compatible with joins. If  $(a_{i})_{i\in I}$ is $\mathbf{fSK}$ then it is $\mathbf{cInd}$.
\end{proposition}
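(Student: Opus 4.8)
The plan is to fix an index $i_0\in I$ and show directly that $a_{i_0}$ is indecomposable; since $i_0$ is arbitrary this yields $\mathbf{cInd}$. So let $(c_k)_{k\in K}$ be a $\times$-disjoint family with $a_{i_0}\rightarrow\bigexists_{k\in K}c_k\in\Sigma$, and look for $\overline{k}\in K$ with $a_{i_0}\rightarrow c_{\overline{k}}\in\Sigma$. First I would clear away the small cases. If $K=\emptyset$, then $\bigexists_{k\in\emptyset}c_k\equiv_\Sigma\bot$ (substituting $\bigexists$ by $\bigvee$, legitimate by \Cref{rem_IA_comp_joins_exists_and_V}), so the hypothesis would force $a_{i_0}\equiv_\Sigma\bot$, contradicting \Cref{nonbot}. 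If $|K|=1$ the claim is immediate, since $\bigexists_{k\in K}c_k\equiv_\Sigma c_k$. So assume $|K|\geq 2$ and fix distinct $k_0,k_1\in K$.

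The heart of the argument is to pad $(c_k)_k$ into a $\times$-functional family over $I\times K$ and feed it to the $\mathbf{fSK}$ property of $(a_i)_{i\in I}$. Put $b^{i_0}_k:=c_k$ for every $k\in K$, and for $i\neq i_0$ put $b^i_{k_0}:=\top$ and $b^i_k:=\bot$ for $k\neq k_0$. Two things have to be checked. For the existence hypothesis, substituting $\bigexists$ by $\bigvee$ again (\Cref{rem_IA_comp_joins_exists_and_V}): for $i\neq i_0$ one has $\bigvee_{k\in K}b^i_k=\top$ (it contains $b^i_{k_0}=\top$), hence $a_i\rightarrow\bigvee_{k\in K}b^i_k=\top$; therefore $\bigwedge_{i\in I}\bigl(a_i\rightarrow\bigexists_{k\in K}b^i_k\bigr)\equiv_\Sigma a_{i_0}\rightarrow\bigvee_{k\in K}c_k\equiv_\Sigma a_{i_0}\rightarrow\bigexists_{k\in K}c_k\in\Sigma$. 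For $\times$-functionality, I would first substitute $\delta_K$ by $d_K$ throughout (legitimate by \Cref{equalitygen} and \Cref{equalitycwj}), so that the diagonal terms $b^i_k\times b^i_k\rightarrow\top$ collapse to $\top$ and the requirement becomes $\bigwedge_{i\in I}\bigwedge_{k\neq k'}(b^i_k\times b^i_{k'}\rightarrow\bot)\in\Sigma$. Its $i=i_0$ part is $D:=\bigwedge_{k\neq k'}(c_k\times c_{k'}\rightarrow\bot)$, which lies in $\Sigma$ by applying the same substitution to the $\times$-disjointness of $(c_k)_k$. For $i\neq i_0$ and $k\neq k'$, one of $b^i_k,b^i_{k'}$ is $\bot$, so $b^i_k\times b^i_{k'}=\top\rightarrow\bot$ by the identity $a\times\bot=\bot\times a=\top\rightarrow\bot$ valid in algebras compatible with joins, and the $i$-th part equals $(\top\rightarrow\bot)\rightarrow\bot$.

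Here I expect the main obstacle: since $\Sigma$ is not closed under binary meets, one cannot conclude merely from $D\in\Sigma$ and $(\top\rightarrow\bot)\rightarrow\bot\in\Sigma$. The fix is to observe that the extra factor contributes nothing new: using $a\times\bot=\top\rightarrow\bot$ once more together with monotonicity of $\times$ in its second argument, $\top\rightarrow\bot=c_{k_0}\times\bot\leq c_{k_0}\times c_{k_1}$, whence $D\leq(c_{k_0}\times c_{k_1}\rightarrow\bot)\leq(\top\rightarrow\bot)\rightarrow\bot$, so the whole meet $\bigwedge_{i\in I}\bigwedge_{k\neq k'}(b^i_k\times b^i_{k'}\rightarrow\bot)$ equals $D\in\Sigma$ and the family is indeed $\times$-functional. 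Applying $\mathbf{fSK}$ now produces a function $f\colon I\rightarrow K$ with $\bigwedge_{i\in I}(a_i\rightarrow b^i_{f(i)})\in\Sigma$; since this meet is $\leq a_{i_0}\rightarrow b^{i_0}_{f(i_0)}$ and $\Sigma$ is upward closed, $a_{i_0}\rightarrow c_{f(i_0)}\in\Sigma$, so $\overline{k}:=f(i_0)$ works. As $i_0$ was arbitrary, $(a_i)_{i\in I}$ is $\mathbf{cInd}$.
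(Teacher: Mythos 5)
Your proof is correct and follows essentially the same route as the paper's: the same padding of the disjoint family into a $\times$-functional family over $I\times K$ (with $\top$ at one fixed index and $\bot$ elsewhere for $i\neq i_0$), the same use of compatibility with joins to trade $\bigexists$ for $\bigvee$ and $\delta$ for $d$, and the same absorption argument showing the extra $(\top\rightarrow\bot)\rightarrow\bot$ factors are dominated by the disjointness meet. The only difference is that you treat the degenerate cases $K=\emptyset$ and $|K|=1$ explicitly, which the paper glosses over but which your argument genuinely needs (the absorption step requires two distinct indices).
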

\begin{proof}
Assume $(a_{i})_{i\in I}$ is $\mathbf{fSK}$, $a_{\overline{i}}\rightarrow \bigexists_{j\in J}b_j\in \Sigma$ (which, since we are assuming compatibility with joins, is equivalent to $a_{\overline{i}}\rightarrow \bigvee_{j\in J}b_j\in \Sigma$) and $\bigwedge_{j,j'\in J}(b_j\times b_{j'}\rightarrow \delta_J(j,j'))\in \Sigma$.

As a consequence of \Cref{nonbot}, $J\neq \emptyset$. Thus, if for every $i\in I$ and $j\in J$ we define
$$c^i_j=\begin{cases}
b_j\textrm{ if }i=\overline{i}\\
\bot \textrm{ if }i\neq \overline{i},j\neq \overline{j}\\
\top\textrm{ if }i\neq\overline{i},j=\overline{j}\\
\end{cases}$$
where $\overline{j}$ is a fixed element of $J$, we get that 
$$\bigwedge_{i\in I}(a_i\rightarrow \bigvee_{j\in J}c^i_j)=a_{\overline{i}}\rightarrow \bigvee_{j\in J}b_j\in \Sigma$$
from which it follows by compatibility with joins that 
$$\bigwedge_{i\in I}(a_i\rightarrow \bigexists_{j\in J}c^i_j)\in \Sigma$$
Moreover using Lemma \ref{equalitycwj} we get that 
$$\bigwedge_{i\in I}\bigwedge_{j,j'\in J}(c^i_j\times c^i_{j'}\rightarrow \delta_J(j,j'))\equiv_{\Sigma}\bigwedge_{j,j'\in J}(b_j\times b_{j'}\rightarrow d_J(j,j'))\wedge \bigwedge_{i\neq \overline{i}} ((\top\rightarrow \bot)\rightarrow \bot)=$$
$$\bigwedge_{j\neq j'\in J}(b_j\times b_{j'}\rightarrow \bot)\wedge \bigwedge_{i\neq \overline{i}} ((\top\rightarrow \bot)\rightarrow \bot) =$$
$$\bigwedge_{j\neq j'\in J}(b_j\times b_{j'}\rightarrow \bot)\equiv_{\Sigma}\bigwedge_{j,j'\in J}(b_j\times b_{j'}\rightarrow \delta_J(j,j'))\in \Sigma$$
since $\delta_{J}(j,j)=\top$, $x\rightarrow \top=\top$ and $x\times y\geq \top\rightarrow \bot$ for every $x,y$ in $\mathbb{A}$.

Thus, since $(a_{i})_{i\in I}$ is $\mathbf{fSK}$, we get the existence of a function $f:I\rightarrow J$ such that 
$$\bigwedge_{i\in I}(a_{i}\rightarrow c^i_{f(i)})\in \Sigma.$$
In particular $a_{\overline{i}}\rightarrow b_{f(\overline{i})}\in \Sigma$ and we can conclude.
\end{proof}
Thus if we add the assumption that $\mathbb{A}$ is compatible with joins the situation can be summarized as follows:

$$\xymatrix{
\mathbf{U\mbox{-}SK}\ar[rd]\ar[d] &\\
\mathbf{U\mbox{-}fSK}\equiv \mathbf{U\mbox{-}wfSK}\ar[rd] &\mathbf{SK}\ar[d]\ar[rd]\\
       &\mathbf{fSK}\ar[d]\ar[rd]  &\mathbf{cSK}\ar[d]\\
                &  \mathbf{wfSK}        &\mathbf{cInd}\\
}$$

\subsubsection*{$\Sigma$ closed under $\bigwedge$}
We start by recalling that, in general, the separator of an implicative algebra is not required to be closed under arbitrary infima, even if there are several situations in which this is the case, e.g.\ for an implicative algebra associated with a complete Heyting algebra. One can also notice that a separator $\Sigma$ is closed under arbitrary infima if and only if it is a principal filter. In \cite[Prop.4.13]{miquel_2020} implicative algebras of which the separator is a principal ultrafilter are characterized as those giving rise to a tripos which is isomorphic to a forcing tripos.
\begin{proposition}\label{fefinf}
If $\Sigma$ is closed under arbitrary infima $\bigwedge$, then all $\mathbf{cSK}$ families are $\mathbf{U\mbox{-}SK}$. 
\end{proposition}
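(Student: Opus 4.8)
The plan is to reduce the claim to the statement ``every $\mathbf{cSK}$ family is $\mathbf{SK}$'', and then to run a short three-step argument using the closure properties of $\Sigma$.

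For the reduction, note that if $(a_i)_{i\in I}$ is $\mathbf{cSK}$ then every reindexing $(a_{f(k)})_{k\in K}$ along a function $f\colon K\to I$ is again $\mathbf{cSK}$, since each entry $a_{f(k)}$ is one of the supercompact elements $a_i$. Hence, once we know that every $\mathbf{cSK}$ family is $\mathbf{SK}$, applying this to all such reindexings yields exactly that $(a_i)_{i\in I}$ is $\mathbf{U\mbox{-}SK}$. This part of the argument does not use the extra hypothesis on $\Sigma$.

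For the core statement, suppose $(a_i)_{i\in I}$ is $\mathbf{cSK}$ and let $((b^i_j)_{j\in J_i})_{i\in I}$ be a family of families with $\bigwedge_{i\in I}(a_i\to\bigexists_{j\in J_i}b^i_j)\in\Sigma$. First I would use that $\Sigma$ is upward closed, together with $\bigwedge_{i\in I}(a_i\to\bigexists_{j\in J_i}b^i_j)\leq a_i\to\bigexists_{j\in J_i}b^i_j$, to get $a_i\to\bigexists_{j\in J_i}b^i_j\in\Sigma$ for each individual $i\in I$. Then the supercompactness of $a_i$ gives, for every $i$, some $\overline{j}\in J_i$ with $a_i\to b^i_{\overline{j}}\in\Sigma$ (in particular every $J_i$ is nonempty), and I would assemble these choices, by the axiom of choice, into a function $f\in(\Pi i\in I)J_i$ with $a_i\to b^i_{f(i)}\in\Sigma$ for all $i$. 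Finally, invoking the hypothesis that $\Sigma$ is closed under arbitrary infima $\bigwedge$, I would conclude $\bigwedge_{i\in I}(a_i\to b^i_{f(i)})\in\Sigma$, which is precisely the $\mathbf{SK}$ condition.

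There is no genuine obstacle here; the only things to keep straight are which closure property of $\Sigma$ is used where — upward closure to pass from the global hypothesis to the componentwise assertions, and closure under $\bigwedge$ to pass from the componentwise witnesses back to a single global witness — together with the (mild, unavoidable) appeal to choice when forming $f$.
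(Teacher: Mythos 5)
Your proof is correct and follows essentially the same route as the paper: reduce to showing that each reindexed family is $\mathbf{SK}$, obtain componentwise witnesses from the supercompactness of each $a_i$ together with the axiom of choice, and reassemble them using closure of $\Sigma$ under $\bigwedge$. Your explicit mention of upward closure of $\Sigma$ to pass from the global hypothesis to the componentwise ones is a step the paper leaves implicit, but the argument is the same.
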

\begin{proof}
Let $(a_i)_{i\in I}$ be a $\mathbf{cSK}$ family.
We have to prove that 
for every $f:J\rightarrow I$, the family $(a_{f(j)})_{j\in J}$ is $\mathbf{SK}$.

Let us assume hence that 
$$\bigwedge_{j\in J}\left(a_{f(j)}\rightarrow \bigexists_{k\in K_j}b^j_k\right)\in \Sigma$$
Using the supercompactness of every component $a_i$ and the axiom of choice, we get the existence of a function $g:(\Pi j\in J)K_j$ such that $a_{f(j)}\rightarrow b^j_{g(j)}\in \Sigma$ for every $j\in J$. Using the hypothesis of closure of $\Sigma$ under $\bigwedge$ we get
$$\bigwedge_{j\in J}\left(a_{f(j)}\rightarrow b^j_{g(j)}\right)\in \Sigma$$
which concludes the proof.


\end{proof}
Similarly one proves that
\begin{proposition}If $\Sigma$ is closed under arbitrary infima $\bigwedge$, then all $\mathbf{cInd}$ families are $\mathbf{U\mbox{-}fSK}$.
\end{proposition}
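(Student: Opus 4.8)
The plan is to imitate the proof of \Cref{fefinf}, with supercompactness of each component replaced by indecomposability and $\mathbf{SK}$ replaced by $\mathbf{fSK}$. By \Cref{cor_UwfSK_iff_UfSK} it suffices to show that a $\mathbf{cInd}$ family $(a_i)_{i\in I}$ is $\mathbf{U\mbox{-}wfSK}$, so only the \emph{existence} of a witnessing function must be produced. Fix an arbitrary $f\colon K\to I$; I must check that $(a_{f(k)})_{k\in K}$ is $\mathbf{wfSK}$. So let $(b^k_j)_{k\in K,\,j\in J}$ be a $\times$-functional family with $\bigwedge_{k\in K}(a_{f(k)}\to\bigexists_{j\in J}b^k_j)\in\Sigma$.

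First I would observe that, since $\Sigma$ is upward closed and $\bigwedge_{k\in K}(a_{f(k)}\to\bigexists_{j\in J}b^k_j)\leq a_{f(k)}\to\bigexists_{j\in J}b^k_j$ for each $k$, every component $a_{f(k)}\to\bigexists_{j\in J}b^k_j$ lies in $\Sigma$. By \Cref{functional_implies_disjoint_for_fix_i} the family $(b^k_j)_{j\in J}$ is $\times$-disjoint for each fixed $k$, and $a_{f(k)}$ is indecomposable because $f(k)\in I$ and $(a_i)_{i\in I}$ is $\mathbf{cInd}$; hence there is some $j_k\in J$ with $a_{f(k)}\to b^k_{j_k}\in\Sigma$. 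The axiom of choice then yields a function $g\colon K\to J$ with $a_{f(k)}\to b^k_{g(k)}\in\Sigma$ for every $k\in K$, and closure of $\Sigma$ under arbitrary infima gives $\bigwedge_{k\in K}(a_{f(k)}\to b^k_{g(k)})\in\Sigma$, which is precisely what $\mathbf{wfSK}$ demands. Hence $(a_{f(k)})_{k\in K}$ is $\mathbf{wfSK}$ for every $f$, so $(a_i)_{i\in I}$ is $\mathbf{U\mbox{-}wfSK}$, and therefore $\mathbf{U\mbox{-}fSK}$.

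Alternatively one can bypass \Cref{cor_UwfSK_iff_UfSK} and verify $\mathbf{fSK}$ directly, adding a check that $g$ is unique: if $g'$ also works then, for each $k$, both $a_{f(k)}\to b^k_{g(k)}$ and $a_{f(k)}\to b^k_{g'(k)}$ lie in $\Sigma$, so (by the pairing rule of the calculus) $a_{f(k)}\to b^k_{g(k)}\times b^k_{g'(k)}\in\Sigma$, and $\times$-functionality forces $a_{f(k)}\to\delta_J(g(k),g'(k))\in\Sigma$; were $g(k)\neq g'(k)$ this would give $a_{f(k)}\equiv_\Sigma\bot$, contradicting \Cref{cInd->nmin}. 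There is no real obstacle in the argument; the only points needing care are the two uses of the extra hypothesis — upward closure of $\Sigma$ to descend from the global infimum to its components, and closure under $\bigwedge$ to reassemble the pointwise witnesses into a global one — together with the routine appeal to the axiom of choice to build $g$.
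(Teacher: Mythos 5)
Your proof is correct and is essentially the argument the paper intends: the paper gives no explicit proof but says ``similarly one proves,'' referring to the componentwise-plus-axiom-of-choice-plus-closure-under-$\bigwedge$ argument of \Cref{fefinf}, which is exactly what you carry out, substituting indecomposability (via \Cref{functional_implies_disjoint_for_fix_i} to get $\times$-disjointness of each slice) for supercompactness. Your explicit handling of uniqueness --- either by routing through $\mathbf{U\mbox{-}wfSK}$ and \Cref{cor_UwfSK_iff_UfSK}, or by the direct $\times$-functionality argument against \Cref{cInd->nmin} --- fills in a detail the paper leaves implicit, but does not change the approach.
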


Summarizing, in the case the separator of an implicative algebra is closed under arbitrary infima we have the following situation:

    $$\xymatrix{
&\mathbf{U\mbox{-}SK}\equiv  \mathbf{cSK}\ar[d]\ar[rd]\\
&\mathbf{U\mbox{-}fSK}\equiv \mathbf{U\mbox{-}wfSK}\equiv  \mathbf{cInd}\ar[rd]  &\mathbf{SK}\ar[d]\\
& &\mathbf{fSK}\ar[d]  \\
    & & \mathbf{wfSK}\\
}$$

\subsubsection*{$\bigexists$-distributivity}
\begin{definition} An implicative algebra is {\bf  $\bigexists$-distributive} if 
$$\bigwedge_{((b^k_j)_{j\in J_k})_{k\in K}\in \mathcal{F}}\left(\bigwedge_{k\in K}\bigexists_{j\in J_k}b^k_{j}\rightarrow \bigexists_{f\in (\Pi k\in K)J_k}\bigwedge_{k\in K}b^{k}_{f(k)}\right)\in \Sigma$$
where $\mathcal{F}$ is the class of all families of families $((b^k_j)_{j\in J_K})_{k\in K}$ of elements of $\mathcal{A}$.
\end{definition}
\begin{remark}Although $\mathcal{F}$ in the previous definition is a class, the infimum there is well-defined since it coincides with the infimum of a subset of $\mathcal{A}$. 
\end{remark}
\begin{proposition}\label{distcomp} If $\mathbb{A}$ is  $\bigexists$-distributive, then every $\mathbf{SK}$ family is $\mathbf{U\mbox{-}SK}$. 
Moreover, every $\mathbf{fSK}$ family is $\mathbf{U\mbox{-}fSK}$.
\end{proposition}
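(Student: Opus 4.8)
The strategy is to leverage the characterizations of $\mathbf{U\mbox{-}SK}$ and $\mathbf{U\mbox{-}fSK}$ from \Cref{carusk} and \Cref{reind}, which reduce the uniform versions to a ``double-indexed'' statement over a family of pairwise disjoint index sets $(K_i)_{i\in I}$. So first I would take an $\mathbf{SK}$ family $(a_i)_{i\in I}$ and data $(((b^k_j)_{j\in J_k})_{k\in K_i})_{i\in I}$ with the $K_i$ pairwise disjoint, satisfying
\[
\bigwedge_{i\in I}\Bigl(a_i\rightarrow \bigwedge_{k\in K_i}\bigexists_{j\in J_k}b^k_j\Bigr)\in \Sigma,
\]
and aim to produce $g\in(\Pi k\in\bigcup_{i\in I}K_i)J_k$ with $\bigwedge_{i\in I}(a_i\rightarrow \bigwedge_{k\in K_i}b^k_{g(k)})\in \Sigma$.

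The key move is to apply $\bigexists$-distributivity to each $i\in I$: for fixed $i$, the element
\[
\bigwedge_{k\in K_i}\bigexists_{j\in J_k}b^k_j \;\rightarrow\; \bigexists_{h\in(\Pi k\in K_i)J_k}\bigwedge_{k\in K_i}b^k_{h(k)}
\]
lies in $\Sigma$, so composing (using that $\Sigma$ is closed under the logical calculus, i.e.\ the realizer from $\bigexists$-distributivity is a pure $\lambda$-term independent of $i$) we obtain
\[
\bigwedge_{i\in I}\Bigl(a_i\rightarrow \bigexists_{h\in(\Pi k\in K_i)J_k}\bigwedge_{k\in K_i}b^k_{h(k)}\Bigr)\in \Sigma.
\]
Now I would set, for each $i$, $\tilde J_i := (\Pi k\in K_i)J_k$ and $\tilde b^i_h := \bigwedge_{k\in K_i}b^k_{h(k)}$ for $h\in\tilde J_i$; the displayed membership says precisely $\bigwedge_{i\in I}(a_i\rightarrow \bigexists_{h\in\tilde J_i}\tilde b^i_h)\in\Sigma$, which is an instance of the $\mathbf{SK}$ hypothesis for $(a_i)_{i\in I}$. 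This yields a choice function $f\in(\Pi i\in I)\tilde J_i$ with $\bigwedge_{i\in I}(a_i\rightarrow \tilde b^i_{f(i)})\in\Sigma$. Finally I would glue the $f(i)\in(\Pi k\in K_i)J_k$ into a single $g\in(\Pi k\in\bigcup_i K_i)J_k$ — this is where pairwise disjointness of the $K_i$ is used — and observe that $\tilde b^i_{f(i)} = \bigwedge_{k\in K_i}b^k_{g(k)}$, giving the desired conclusion; by \Cref{carusk} this proves $(a_i)_{i\in I}$ is $\mathbf{U\mbox{-}SK}$.

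For the $\mathbf{fSK}\Rightarrow\mathbf{U\mbox{-}fSK}$ part, by \Cref{cor_UwfSK_iff_UfSK} it suffices to prove $\mathbf{fSK}\Rightarrow\mathbf{U\mbox{-}wfSK}$, and I would run the same argument against the characterization in \Cref{reind}, now with a single shared index set $J$ and the $\times$-functionality hypothesis on $(b^k_j)_{k\in\bigcup_i K_i,\,j\in J}$. The one extra point to check is that the derived family $(\tilde b^i_h)_{i\in I,\,h\in\tilde J_i}$, with $\tilde J_i = J^{K_i}$, is $\times$-functional, so that the $\mathbf{fSK}$ hypothesis applies: if $h,h'\in J^{K_i}$ and $\tilde b^i_h\times\tilde b^i_{h'}\rightarrow\delta(h,h')$ must be produced, one uses that $\bigwedge_{k}b^k_{h(k)}\times\bigwedge_k b^k_{h'(k)}$ entails $b^k_{h(k)}\times b^k_{h'(k)}$ for each $k$ (via $\times$-projections expressible as pure $\lambda$-terms), hence entails $\delta_J(h(k),h'(k))$ for each $k$ by $\times$-functionality of the original family, and $h=h'$ iff $h(k)=h'(k)$ for all $k\in K_i$ — so the conjunction of these forces $\delta(h,h')$. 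I expect this verification of $\times$-functionality of the ``product'' family, i.e.\ the bookkeeping translating a conjunction of disjointness witnesses into a single one uniformly in $i$, to be the main technical obstacle; the rest is an essentially formal chase through the calculus combined with \Cref{carusk} and \Cref{reind}.
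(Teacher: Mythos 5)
Your argument for the first half is exactly the paper's proof: reduce to the characterization of $\mathbf{U\mbox{-}SK}$ in \Cref{carusk}, apply $\bigexists$-distributivity row by row to trade the inner $\bigwedge_{k\in K_i}\bigexists_{j\in J_k}$ for a single $\bigexists$ over $(\Pi k\in K_i)J_k$, invoke $\mathbf{SK}$ of $(a_i)_{i\in I}$, and glue the resulting choice functions using the pairwise disjointness of the $K_i$. For the second half the paper only says the proof is ``analogous'', so your additional observations --- reducing to $\mathbf{U\mbox{-}wfSK}$ via \Cref{cor_UwfSK_iff_UfSK} and, crucially, verifying that the derived product family $(\bigwedge_{k\in K_i}b^k_{h(k)})_h$ inherits $\times$-functionality so that the $\mathbf{fSK}$ hypothesis is actually applicable --- correctly identify and discharge the detail the paper elides (one should also note that the derived index sets $J^{K_i}$ vary with $i$, whereas the stated definition of $\mathbf{fSK}$ fixes a single $J$; the paper itself is loose on this point, e.g.\ in the proof of \Cref{nminwfsk}, and the mismatch is harmless bookkeeping).
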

\begin{proof}
We use Proposition \ref{carusk}.
Thus let $(a_i)_{i\in I}$ be a supercompact family and consider a family of families of families $(((b^k_j)_{j\in J_k})_{k\in K_i})_{i\in I}$ of elements of $\mathcal{A}$, with $(K_i)_{i\in I}$ a family of pairwise disjoint sets, such that
$$\bigwedge_{i\in I}(a_i\rightarrow \bigwedge_{k\in K_i}\bigexists_{j\in J_k}b^k_j)\in \Sigma$$
By uniform $\bigexists$-distributivity we get that 
$$\bigwedge_{i\in I}(a_i\rightarrow \bigexists_{g\in (\Pi k\in K_i)J_k}\bigwedge_{k\in K_i}b^k_{g(k)})\in \Sigma$$
Using supercompactness of $(a_i)_{i\in I}$ we hence get that there exists $f\in (\Pi i\in I)(\Pi k\in K_i)J_k$ such that
$$\bigwedge_{i\in I}(a_i\rightarrow \bigwedge_{k\in K_i}b^k_{f(i)(k)})\in \Sigma$$
Since the $K_i$'s are pairwise disjoint, the function $\widetilde{f}$ sending every $k\in K_i$ to $f(i)(k)$ is well defined and $\widetilde{f}(k)\in J_k$; hence we get 
$$\bigwedge_{i\in I}(a_i\rightarrow \bigwedge_{k\in K_i}b^k_{\widetilde{f}(k)})\in \Sigma$$
Thus we get uniform supercompactness of $(a_{i})_{i\in I}$.

The proof of the second part of the statement is analogous.
\end{proof}
Thus if we add the assumption that $\mathbb{A}$ is $\lar{\exists}$-distributive, the situation can be summarized as follows:
    $$\xymatrix{
&\mathbf{U\mbox{-}SK}\equiv  \mathbf{SK}\ar[d]\ar[rd]\\
&\mathbf{U\mbox{-}fSK}\equiv \mathbf{U\mbox{-}wfSK}\equiv  \mathbf{fSK}\ar[rd]\ar[d]  &\mathbf{cSK}\ar[d]\\
&\mathbf{wfSK} &\mathbf{cInd}  \\
}$$


\subsection{Supercompactness and indecomposability in complete Heyting algebras}\label{sect_supercompact_in_heyting_and_boolean_alg}
In the case of a complete Heyting algebra, the notion of supercompact element introduced in \Cref{def_supercompact_element_IA} coincides with the ordinary notion, i.e an element $a$ is supercompact if $a\leq \bigvee_{i\in I}b_i$ implies the existence of an  $\overline{i}\in I$ such that $a\leq b_{\overline{i}}$, for every set-indexed family $(b_i)_{i\in I}$ of elements of $\mathcal{A}$, that is if $a$ is a  supercompact element in the localic sense (see \cite{pp}). In particular, in the boolean case supercompact elements are exactly atoms. Indecomposable elements in Heyting algebras express just a notion of connectedness which is called in fact \emph{indecomposability} and it can be shown to be equivalent to usual connectedness\footnote{An element $a$ of a Heyting algebra is connected if and only if $a\leq b\vee c$ implies $a\leq b$ or $a\leq c$.} plus the fact of being different from $\bot$ (see e.g.\ \cite{OC}). For complete Boolean algebras, supercompactness coincides with indecomposability. Indeed, for every family of elements $(b_i)_{i\in I}$ of a Boolean algebra one can construct a new family $(\widetilde{b}_i)_{i\in I}$ satisfying the following properties:
\begin{enumerate}
\item $\widetilde{b}_{i}\leq b_i$ for every $i\in I$;
\item $\widetilde{b}_i\wedge \widetilde{b}_j=\bot$ for every $i\neq j$ in $I$
\item $\bigvee_{i\in I} \widetilde{b}_i=\bigvee_{i\in I}b_i$.
\end{enumerate}
Notice that in general indecomposable elements of a Heyting algebra are not supercompact. For example, we can consider the complete Heyting algebra of open subsets of reals: it is immediate to check that there are no supercompact open subsets of reals, while each open interval is indecomposable.

Every Heyting algebra can be easily seen to be compatible with joins. Moreover $\Sigma$, being $\{\top\}$, is closed under $\bigwedge$. Thus we get that in there for families we have $\mathbf{U\mbox{-}SK}\equiv \mathbf{SK}\equiv \mathbf{cSK}$. Families satisfying these properties are exactly componentwise supercompact (in localic sense) families. 
Moreover, $\mathbf{U\mbox{-}fSK}\equiv \mathbf{U\mbox{-}wfSK}\equiv \mathbf{fSK}\equiv \mathbf{cInd}$. Families satisfying these properties are exactly 
componentwise indecomposable families.

Finally, one can easily see that a family $(a_i)_{i\in I}$ is $\mathbf{wfSK}$ if and only if one of the following happens:
\begin{enumerate}
\item $I=\emptyset$ or
\item $I\neq \emptyset$, $a_i$ is connected for every $i\in I$, but at least one of the $a_i$'s is indecomposable.
\end{enumerate}

$$\xymatrix{
&\mathbf{U\mbox{-}SK}\equiv \mathbf{SK}\equiv \mathbf{cSK}\ar[d]\\
&\mathbf{U\mbox{-}fSK}\equiv \mathbf{U\mbox{-}wfSK}\equiv \mathbf{fSK}\equiv \mathbf{cInd}\ar[d]\\
& \mathbf{wfSK}  \\
}$$





Since for complete Boolean algebras indecomposable elements are exactly supercompact elements, for Boolean algebras we have that
$\mathbf{U\mbox{-}SK}\equiv \mathbf{SK}\equiv \mathbf{cSK}\equiv \mathbf{U\mbox{-}fSK}\equiv \mathbf{U\mbox{-}wfSK}\equiv\mathbf{fSK}\equiv \mathbf{cInd}$
and these are exactly componentwise atomic families; $\mathbf{wfSK}$ families are families whose components are atoms or minima, but not all minima if the family is non-empty.

Notice that a complete Heyting algebra is $\bigexists$-distributive if and only if it is completely distributive. For complete Boolean algebras this amounts to the requirement of being complete atomic (that is of being isomorphic to a powerset algebra).

\subsection{Supercompactness and indecomposability in different kinds of realizability}\label{sect_supercompact_in_realizability}
\subsubsection*{Realizability}
In the case of an implicative algebra $(\mathcal{P}(R),\subseteq, \Rightarrow, \mathcal{P}(R)\setminus \{\emptyset\})$ coming from a combinatory algebra $(R,\cdot)$, supercompact elements are non-empty subsets of $R$, since they are all equivalent to $\top=R$, and since $\bot=\emptyset$ (we are using Remarks \ref{supmin} and \ref{supmax}). Every non-empty subset of $R$ is also indecomposable (since it is supercompact) and every indecomposable subset is non-empty. Thus, supercompactness and indecomposability coincide for elements.

We can also notice that the  implicative algebra arising from a combinatory algebra $(R,\cdot)$ is compatible with joins and $\bigexists$-distributive.
The former property can be trivially verified, while to prove the latter, it is enough to observe that since the implicative algebra coming from a combinatory algebra $(R,\cdot)$ is compatible with joins, it is sufficient to prove that 
there is a realizer $r$ independent from the specific family such that 
$$r\in \bigcap_{i\in I}\bigcup_{j\in J_i}B^i_j\Rightarrow \bigcup_{f\in (\Pi i\in I)J_i}\bigcap_{i\in I}B^i_{f(i)}$$
Since the antecedent and the consequent of the implication above are equal sets, $r$ can be taken to be $\mathbf{i}$.

Therefore, by \Cref{distcomp}, we have that $\mathbf{U\mbox{-}SK}$ families coincide with $\mathbf{SK}$ families, and $\mathbf{U\mbox{-}fSK}$ families coincide with $\mathbf{fSK}$ families.
We now show that for every $\mathbf{SK}$ family $(A_i)_{i\in I}$ there exists a function $f:I\rightarrow R$ such that 
$$\bigcap_{i\in I}(A_i\Rightarrow \{f(i)\})\neq \emptyset\textrm{ and }\bigcap_{i\in I}( \{f(i)\}\Rightarrow A_i)\neq \emptyset$$
In order to do this we consider the family of families $((\{j\})_{j\in A_i})_{i\in I}$.
One easily can show that 
$$\lambda x.\lambda y.yx\in \bigcap_{i\in I}(A_i\Rightarrow \bigexists_{j\in A_i}\{j\})$$
Using supercompactness we get the existence of a function $f\in(\Pi i\in I)A_i\subseteq R^I$ and a realizer $s$ such that 
$$s\in \bigcap_{i\in I}(A_i\Rightarrow \{f(i)\})$$
Since $\mathbf{i}\in \bigcap_{i\in I}( \{f(i)\}\Rightarrow A_i)$ we can conclude.

Conversely, we show that if $(A_i)_{i\in I}$ is a family satisfying the property above, then it is $\mathbf{SK}$.
So let us assume 
$$r\in \bigcap_{i\in I}(A_{i}\Rightarrow \bigexists_{j\in J_i}B^i_j)$$
Using compatibility with joins and the hypothesis on $(A_i)_{i\in I}$ we get that there exists a realizer $r'$ such that 
$$r'\in \bigcap_{i\in I}(\{f(i)\}\Rightarrow \bigcup_{j\in J_i}B^i_j)$$
Now, for every $i\in I$ there exists $j\in J_i$ such that $r'\cdot f(i)\in B^i_j$. Using the axiom of choice we hence produce a function $g\in (\Pi i\in I)J_i$ such that 
$$r'\in \bigcap_{i\in I}(\{f(i)\}\Rightarrow B^{i}_{g(i)})$$
Using the hypothesis on the family again we get the existence of a realizer $s$ such that 
$$s\in \bigcap_{i\in I}(A_i\Rightarrow B^{i}_{g(i)})$$

Assume now $(R,\cdot)$ to be a non-trivial combinatory algebra and consider three distinct realizers $r_0,r_1,r_2\in R$ (this is always possible in a non-trivial combinatory algebra) and the family $(A_i)_{i\in \{0,1,2\}}$
where $A_0=\{r_0,r_1\}$, $A_1=\{r_1,r_2\}$ and $r_2=\{r_0,r_2\}$. Let us assume there is a function $f:\{0,1,2\}\rightarrow R$ such that 
$$\bigcap_{i=0,1,2}(A_i\Rightarrow \{f(i)\})\neq \emptyset$$
Then, since $A_i\cap A_j\neq \emptyset$ for every $i,j\in \{0,1,2\}$ we get $f(0)=f(1)=f(2)=\overline{a}$. If we now assume also that there exists $r\in \mathsf{R}$ such that 
$$r\in \bigcap_{i=0,1,2}(\{\overline{a}\}\Rightarrow A_i)$$
then we get that $r\cdot\overline{a}\in A_0\cap A_1\cap A_2$ which is in contradiction with our assumption. We have hence produced an example of a $\mathbf{cSK}$ family which is not $\mathbf{SK}$. Thus for the implicative algebra arising from a non-trivial combinatory algebra $\mathbf{cSK}$ does not imply $\mathbf{SK}$.
Since supercompact elements coincide with functionally supercompact elements (they are just non-empty sets of realizers), we have that 
$\mathbf{cSK}\equiv \mathbf{cInd}$.
Moreover, it is easy to show that in the case of realizability $\mathbf{cSK}$ implies $\mathbf{wfSK}$, from which it follows that $\mathbf{cSK}$ implies $\mathbf{fSK}$. Conversely, $\mathbf{fSK}$ implies $\mathbf{cSK}$ by Proposition \ref{nonbot}. Thus, also $\mathbf{fSK}\equiv \mathbf{cSK}$.

Finally, one can easily prove that a family $(A_i)_{i\in I}$ is $\mathbf{wfSK}$ if and only if $I=\emptyset$ or there exists $\overline{i}\in I$ such that $A_{\overline{i}}\neq \emptyset$.

$$\xymatrix{
\mathbf{U\mbox{-}SK}\equiv \mathbf{SK}\ar[d]\\
 \mathbf{cSK}\equiv\mathbf{U\mbox{-}fSK}\equiv \mathbf{U\mbox{-}wfSK}\equiv \mathbf{fSK}\equiv \mathbf{cInd}\ar[d]\\
\mathbf{wfSK}
}$$
\subsubsection*{Relative realizability}
It is easy to check that the implicative algebras of relative realizability are compatible with joins and $\lar{\exists}$-distributive.

Supercompact elements are exactly those  $A\subseteq R$ which are equivalent to a singleton. Indeed, since the combinator $\mathbf{i}$ can be assumed to be in $R_\#$, the implicative algebra is compatible with joins and $\mathbf{i}\in A\Rightarrow \bigcup_{a\in A}\{a\}$, we get that if $A$ is supercompact, then there exists $a\in A$ and $r\in R_\#$, such that $r\in A\Rightarrow\{a\}$. Since $\mathbf{i}\in \{a\}\Rightarrow A$, we can conclude that $A\equiv_{\Sigma^r_{\mathcal{R},\mathcal{R}_\#}}\{a\}$. Conversely, each such an element is supercompact since singletons are easily shown to be so. In particular, all subsets $A$ such that $A\cap R_\#\neq \emptyset$ are supercompact. Another example of supercompact is a set of the form $P_a:=\{\mathbf{p}ab|\,b\in R\}$ where $a\in R$ is fixed and where $\mathbf{p}$ is the usual pairing combinator which we can assume to be in $R_\#$. Indeed, this subset is equivalent to the singleton $\{a\}$ since $\mathbf{p}_1\in P_a\Rightarrow\{a\}$ and the first-projection combinator can be assumed to be in $R_\#$, while $\lambda x.\mathbf{p}xx\in \{a\}\Rightarrow P_a$ and this combinator too can be assumed to be in $R_\#$.


From the characterization of $\times$-disjoint families in relative realizability implicative algebras, it follows that indecomposable elements in this case are just non-empty subsets of $R$. Thus in general $\mathbf{cInd}$ is not equivalent to $\mathbf{cSK}$ for families. 

Moreover, $\mathbf{SK}$ is not equivalent to $\mathbf{cSK}$ (realizability is a special case of relative realizability). 

Since the implicative algebra is $\bigexists$-distributive, 
then $\mathbf{U\mbox{-}SK}\equiv \mathbf{SK}$ and $\mathbf{U\mbox{-}fSK}\equiv\mathbf{U\mbox{-}wfSK}\equiv \mathbf{fSK}$ and one can easily show, as for realizability, that $\mathbf{SK}$ families are those families equivalent to singleton families, i.e. families of the form $(\{f(i)\})_{i\in I}$ for some function $f:I\rightarrow R$.

One can also show easily that $\mathbf{cInd}$ is equivalent to $\mathbf{fSK}$.
Finally, as in the realizability case, one can easily prove that a family $(A_i)_{i\in I}$ is $\mathbf{wfSK}$ if and only if $I=\emptyset$ or there exists $\overline{i}\in I$ such that $A_{\overline{i}}\neq \emptyset$.
$$\xymatrix{
\mathbf{U\mbox{-}SK}\equiv\mathbf{SK}\ar[d]\\
\mathbf{cSK}\ar[d]\\
\mathbf{U\mbox{-}fSK}\equiv\mathbf{U\mbox{-}wfSK}\equiv\mathbf{fSK}\equiv\mathbf{cInd}\ar[d]\\
\mathbf{wfSK}\\
}$$

\subsubsection*{Nested realizability}
In nested realizability implicative algebras supercompact elements are pairs $(A,B)$ which are equivalent to a pair of the form $(X,\{b\})$ with $X\subseteq \{b\}$. Indeed, one can notice that 
$$\mathbf{i}\in (A\Rightarrow_\# \bigcup_{b\in B}(\{b\}\cap A))\; \cap\; (B\Rightarrow \bigcup_{b\in B}\{b\})$$
Thus, since the implicative algebra of nested realizability is compatible with joins, if $(A,B)$ is supercompact, then there exists $b\in B$ such that $(\{b\}\cap A,\{b\})\equiv_{\Sigma_{\mathcal{R},\mathcal{R}_\#}}(A,B)$. The converse can be easily checked. 

Functionally supercompact elements are those $(A,B)$ with $B\neq \emptyset$.

Nested realizability implicative algebras are also $\bigexists$-distributive. Thus, also in this case we get the equivalence between $\mathbf{SK}$ and $\mathbf{U\mbox{-}SK}$ and in the non-trivial case we get that $\mathbf{cSK}$ does not imply $\mathbf{SK}$ using an argument similar to that used for the realizability case. 

In particular a family $(A_i,B_i)_{i\in I}$ is $\mathbf{SK}$ if and only if there exists a family $(C_i,D_i)_{i\in I}$ with $D_i$ singleton for every $i\in I$ such that 
$$\bigcup_{i\in I}\left(A_i\Rightarrow_\# C_i\cap B_i\Rightarrow D_i\right)\neq \emptyset\textrm{ and }\bigcup_{i\in I}\left(C_i\Rightarrow_\# A_i\cap D_i\Rightarrow B_i\right)\neq \emptyset$$
In order to prove this one just has to work with the family of families $((\{j\}\cap A_i\},\{j\})_{j\in B_i})_{i\in I}$.

We can characterize $\mathbf{fSK}$ families as those families $(A_i,B_i)_{i\in I}$ where $B_i\neq \emptyset$ for every $i\in I$, that is $\mathbf{fSK}\equiv \mathbf{cInd}$. This is also equivalent to $\mathbf{U\mbox{-}fSK}$ by $\lar{\exists}$-distributivity.

\subsubsection*{Modified realizability}

Supercompact elements in modified realizability implicative algebras are pairs $(A,B)$ with $A\neq\emptyset$; indeed one can exploit the fact that in general $\bigexists_{i\in I}b_{i}\rightarrow \bigvee_{i\in I}b_i\in \Sigma$ (see Proposition \ref{existentialgen}), to prove that every $(A,B)$ with $A\neq\emptyset$ is supercompact, while one can easily show that $(\emptyset,B)\Rightarrow_n \bigexists \emptyset\in \Sigma$ from which it follows that $(\emptyset,B)$ can never be supercompact. From this characterization of supercompact elements it follows also that supercompact elements coincide with indecomposable elements.

By considering the empty family, one can easily check that a modified realizability implicative algebra is not compatible with joins, provided $\mathcal{R}$ is not trivial. This fact also makes difficult to prove whether these implicative algebras are $\lar{\exists}$-distributive or not. We leave as an open problem for further investigations to characterize the different kinds of families in this case.

\subsection{Supercoherent implicative algebras}

\begin{definition} An implicative algebra $\mathbb{A}$ satisfies {\bf choice rule} if for every family $((b^k_j)_{j\in J_K})_{k\in K}$ of families of elements of $\mathcal{A}$, if 
$\bigwedge_{k\in K}\bigexists_{j\in J_k}b^k_j\in \Sigma$, 
then there exists $f\in (\Pi k\in K)J_k$ such that
$\bigwedge_{k\in K}b^k_{f(k)}\in \Sigma$.
\end{definition}

It is an easy exercise to prove that the following holds.
\begin{proposition}\label{prop_RC_implies_every_top_is_unif_supercompact}
$\mathbb{A}$ satisfies the choice rule if and only if $\top_I$ is $\mathbf{U\mbox{-}SK}$ for every $I$.
\end{proposition}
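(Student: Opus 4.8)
The statement to prove is Proposition~\ref{prop_RC_implies_every_top_is_unif_supercompact}: $\mathbb{A}$ satisfies the choice rule if and only if $\top_I$ is $\mathbf{U\mbox{-}SK}$ for every set $I$. Here $\top_I$ denotes the family $(\top)_{i\in I}$ constantly equal to the top element. The plan is to prove the two implications separately, with the forward direction being essentially a direct verification and the backward direction following by specialising to $I=\{\star\}$.

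For the direction ``choice rule $\Rightarrow$ $\top_I$ is $\mathbf{U\mbox{-}SK}$ for every $I$'': first I would recall that by \Cref{reind}-style reasoning (or directly by the definition of $\mathbf{U\mbox{-}SK}$ as stability of $\mathbf{SK}$ under reindexing along arbitrary $f\colon K\to I$), it suffices to show that the constant family $\top_K$ is $\mathbf{SK}$ for every set $K$, since any reindexing of $\top_I$ along $f\colon K\to I$ is again the constant family $\top_K$. So suppose $\bigwedge_{k\in K}\big(\top\rightarrow\bigexists_{j\in J_k}b^k_j\big)\in\Sigma$. Using the calculus of the implicative algebra (in particular $\top\rightarrow a\equiv_\Sigma a$, which follows from $\mathbf{k}^{\mathbb{A}}$ and the deduction rules for $\top$), this is equivalent to $\bigwedge_{k\in K}\bigexists_{j\in J_k}b^k_j\in\Sigma$. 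Applying the choice rule yields $f\in(\Pi k\in K)J_k$ with $\bigwedge_{k\in K}b^k_{f(k)}\in\Sigma$, and then converting back, $\bigwedge_{k\in K}(\top\rightarrow b^k_{f(k)})\in\Sigma$, which is exactly the $\mathbf{SK}$ condition for $\top_K$. The only technical point is the passage between $\bigwedge_{k\in K}(\top\rightarrow c_k)\in\Sigma$ and $\bigwedge_{k\in K}c_k\in\Sigma$; both directions use that one can uniformly produce a $\lambda$-term realizing the implication, so the relevant element lands in $\Sigma$ by the closure properties recalled in the subsection on the calculus.

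For the converse, assume $\top_I$ is $\mathbf{U\mbox{-}SK}$ for every $I$; in particular $\top_{\{\star\}}$ is $\mathbf{U\mbox{-}SK}$, hence $\mathbf{SK}$ by \Cref{u->c}, which by \Cref{cSKSK}/\Cref{supmax} means that $\top$ is a supercompact element, i.e.\ whenever $\bigexists_{j\in J}b_j\in\Sigma$ there is $\overline{\jmath}\in J$ with $b_{\overline{\jmath}}\in\Sigma$. To recover the full choice rule, given $((b^k_j)_{j\in J_k})_{k\in K}$ with $\bigwedge_{k\in K}\bigexists_{j\in J_k}b^k_j\in\Sigma$, I would use that $\top_K$ is $\mathbf{SK}$: rewriting the hypothesis as $\bigwedge_{k\in K}(\top\rightarrow\bigexists_{j\in J_k}b^k_j)\in\Sigma$ and applying $\mathbf{SK}$ for $\top_K$ gives $f\in(\Pi k\in K)J_k$ with $\bigwedge_{k\in K}(\top\rightarrow b^k_{f(k)})\in\Sigma$, hence $\bigwedge_{k\in K}b^k_{f(k)}\in\Sigma$. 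This is exactly the choice rule.

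I expect the main (and only) obstacle to be the bookkeeping around the equivalences $\top\rightarrow a\equiv_\Sigma a$ and, more importantly, the uniform version $\bigwedge_{k\in K}(\top\rightarrow c_k)\equiv_\Sigma\bigwedge_{k\in K}c_k$ — one must check that a single $\lambda$-term witnesses the implication uniformly in $k$, so that the infimum itself (not just each component) lies in $\Sigma$; this is routine given the calculus but is the one place where care is needed. Everything else reduces to unwinding the definitions of $\mathbf{SK}$ and $\mathbf{U\mbox{-}SK}$ and invoking \Cref{u->c} and \Cref{cSKSK}, which is why the authors call it ``an easy exercise''.
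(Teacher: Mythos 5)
Your argument is correct and is exactly the ``easy exercise'' the paper leaves to the reader: the key identity $\bigwedge_{k\in K}(\top\rightarrow c_k)=\top\rightarrow\bigwedge_{k\in K}c_k$ (from the second axiom of implicative structures) together with $\top\rightarrow c\in\Sigma\iff c\in\Sigma$ (via $\mathbf{k}^{\mathbb{A}}$, upward closure and modus ponens) makes the $\mathbf{SK}$ condition for $\top_K$ literally the choice rule, and reindexing a constant family gives a constant family, so $\mathbf{U\mbox{-}SK}$ for all $\top_I$ reduces to $\mathbf{SK}$ for all $\top_K$. No gaps.
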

\begin{example}
The implicative algebras of realizability, nested realizability and relative realizability satisfy the choice rule. A complete Heyting algebra satisfies choice rule if and only if it is supercompact (see \cite{pp}).
\end{example}
Motivated by the characterization presented in \cite{MaiettiTrotta21} and by the realizability and localic examples, we introduce the following definition.

\begin{definition}\label{def_unif_supercoherent}
    An implicative algebra $\mathbb{A}$ is said to be \textbf{uniformly supercoherent} if:
    \begin{itemize}
        \item it satisfies the choice rule;
        \item if $(a_i)_{i\in I}$ and $(b_i)_{i\in I}$ are $\mathbf{U\mbox{-}SK}$ families, then $(a_i\times b_i)_{i\in I}$ is a $\mathbf{U\mbox{-}SK}$ family;
        \item for every family $(a_j)_{j\in J}$ there exists a set $I$, a function $\freccia{I}{f}{J}$ and a $\mathbf{U\mbox{-}SK}$ family $(b_i)_{i\in I}$ such that 
        $$\bigwedge_{j\in J}(a_j\rightarrow \bigexists_{f(i)=j} b_i) \in \Sigma\qquad\textrm{ and }\qquad\bigwedge_{j\in J}((\bigexists_{f(i)=j} b_i)\rightarrow a_j)\in \Sigma$$
    \end{itemize}
\end{definition}
\begin{remark}
We anticipate here that, by definition, an implicative algebra is uniformly supercoherent if and only if its implicative tripos is an instance of the full existential completion, see \cite[Thm. 4.16 and Thm. 7.32]{MaiettiTrotta21}. We will see more details about this in the next section and in particular in \Cref{rem_SK_p_are_FEF_in_realizability}.
\end{remark}
\begin{example} \label{ex_superco_realizability_and Heyting_alg}
An implicative algebra coming from a complete Heyting algebra is uniformly supercoherent if and only if the corresponding locale is supercoherent. The implicative algebras of realizability w.r.t.\ a combinatory algebra are uniformly supercoherent. We refer to \cite{MaiettiTrotta21} for all the details.
\end{example}

\begin{definition}\label{def_unif_functional_supercoherent}
    An implicative algebra $\mathbb{A}$ is said to be \textbf{uniformly functional-supercoherent} if:
    \begin{itemize}
        \item it satisfies the choice rule;
        \item if $(a_i)_{i\in I}$ and $(b_i)_{i\in I}$ are $\mathbf{U\mbox{-}SK}$ families, then $(a_i\times b_i)_{i\in I}$ is a $\mathbf{U\mbox{-}SK}$ family;
        \item for every $\mathbf{U\mbox{-}fSK}$ family $(a_j)_{j\in J}$ there exists a set $I$, a function $\freccia{I}{f}{J}$ and a $\mathbf{U\mbox{-}SK}$ family $(b_i)_{i\in I}$ such that 
        $$\bigwedge_{j\in J}(a_j\rightarrow \bigexists_{f(i)=j} b_i) \in \Sigma\qquad\textrm{ and }\qquad\bigwedge_{j\in J}((\bigexists_{f(i)=j} b_i)\rightarrow a_j)\in \Sigma$$
    \end{itemize}
\end{definition}

\begin{remark}\label{rem_SK_iff_U_SK_in_unif_suprc_alg}
    Notice that in a uniformly supercoherent implicative algebra, we have that $\mathbf{SK}\equiv\mathbf{U\mbox{-}SK}$. Again, this can be considered as a particular case of the \cite[Lem. 4.11]{MaiettiTrotta21}.
\end{remark}

\subsection{Modest and core families}
We introduce here some notions which we will use later.
\begin{definition} A family $(a_i)_{i\in I}$ of elements of $\mathbb{A}$ is
\begin{enumerate}
\item $\wedge$-\emph{modest} if it is $\mathbf{U\mbox{-}fSK}$ and $\wedge$-disjoint;
\item $\times$-\emph{modest} if it is $\mathbf{U\mbox{-}fSK}$ and $\times$-disjoint;
\item a $\wedge$-\emph{core family} if it is $\mathbf{U\mbox{-}SK}$ and $\wedge$-disjoint.
\item a $\times$-\emph{core family} if it is $\mathbf{U\mbox{-}SK}$ and $\times$-disjoint.
\end{enumerate}
\end{definition}
Using results and definitions in the previous section one has that:
$$\xymatrix{
\times\textrm{-core}\ar@2[r]\ar@2[d]   &\wedge\textrm{-core}\ar@2[d]\\
 \times\textrm{-modest}\ar@2[r]   &\wedge\textrm{-modest}\\                       
}$$
\begin{example}
In a complete Heyting algebra, since $\times=\wedge$, $\wedge$-modest families coincide with $\times$-modest families: they are families whose elements are pairwise disjoint and indecomposable. Also $\wedge$-core families coincide with $\times$-core families: they are families of pairwise disjoint supercompact elements.
\end{example}
\begin{example}
In the (total) realizability case $\wedge$-modest families are families of pairwise disjoint non-empty sets of realizers, that is modest sets or PERs (see \cite{rosMS}). 
A family $(A_i)_{i\in I}$ is a $\wedge$-core family if and only if it is equivalent to a family of the form $(\{f(i)\})_{i\in I}$ with $f:I\rightarrow A$ injective.
Lastly, a family $(A_i)_{i\in I}$ is $\times$-modest if and only if it is a $\times$-core family if and only if $I=\emptyset$ or $I$ is a singleton $\{\star\}$ and $A_\star$ is non-empty.
\end{example}

\section{Triposes from implicative algebras}
In \cite{miquel_2020} Miquel introduced the notion of \emph{tripos} associated with an implicative algebra, and he proved in \cite{miquel_2020_2} that every $\set$-based tripos, i.e.\ every tripos as originally introduced in \cite{TT}, is equivalent to a tripos arising from an implicative algebra. In this section, we recall the definition of $\set$\emph{-based tripos} (from \cite{TT}), \emph{implicative tripos} and the main result of Miquel. 

\textbf{Notation:} we denote by $\mathsf{Hey}$ the category of Heyting algebras and their morphisms, and we denote by $\mathsf{Pos}$ the category of posets and their morphisms.

\begin{definition}[tripos]
    A ($\set$-based) \textbf{tripos} is a functor $\hyperdoctrine{\set}{\sP}$ such that
    \begin{itemize}
        \item for every function $\freccia{X}{f}{Y}$ the re-indexing functor $\freccia{\sP (Y)}{\sP_f}{\sP(X)}$ has a left adjoint $\freccia{\sP(X)}{\exists_f}{\sP(Y)} $ and a right adjoint $\freccia{\sP(X)}{\forall_f}{\sP(Y)} $ in the category $\mathsf{Pos}$, satisfying the Beck-Chevalley condition (BCC), i.e. for every pullback
\[\begin{tikzcd}
	W & Z \\
	X & Y
	\arrow["{f'}", from=1-1, to=1-2]
	\arrow["g", from=1-2, to=2-2]
	\arrow["{g'}"', from=1-1, to=2-1]
	\arrow["f"', from=2-1, to=2-2]
	\arrow["\lrcorner"{anchor=center, pos=0.125}, draw=none, from=1-1, to=2-2]
\end{tikzcd}\]
we have that $\sP_g\exists_f=\exists_{f'}\sP_{g'}$ and $\sP_g\forall_f=\forall_{f'}\sP_{g'}$.
\item there exists a \emph{generic predicate}, namely there exists a set $\Sigma$ and an element $\sigma$ of $\sP(\Sigma)$ such that for every element $\alpha$ of $\sP(X)$ there exists a function $\freccia{X}{f}{\Sigma}$ such that $\alpha=\sP_f(\sigma)$;
    \end{itemize}
\end{definition}
\begin{remark}[Frobenius reciprocity]
    Employing the preservation of the Heyting implication $\to$ by $\sP_f$, it is straightforward to check that every tripos $\sP$ satisfies the so-called \emph{Frobenius reciprocity} (FR), namely:
    \[ \exists_f(\sP_f(\alpha)\wedge \beta)= \alpha \wedge \exists_f (\beta)\mbox{ and }  \forall_f(\sP_f(\alpha)\rightarrow \beta)= \alpha \rightarrow \forall_f (\beta)\]
    for every  function $\freccia{X}{f}{Y}$, $\alpha$ in $\sP(Y)$ and $\beta$ in $\sP (X)$. See \cite[Rem. 1.3]{TT}.
\end{remark}
Given a $\set$-based tripos $\hyperdoctrine{\set}{\sP}$, we will denote by $\delta_X:=\exists_{\Delta_X}(\top)$ the so-called \emph{equality predicate} on $X$ of the tripos.

Now let us consider an implicative algebra $\mathbb{A}=(\mathcal{A},\leq, \to,\Sigma)$. For each set $I$ we can define a new implicative algebra $(\mathcal{A}^I,\leq^I, \to^I,\Sigma[I])$ where $\mathcal{A}^I$ denotes the set of functions from $I$ to $\mathcal{A}$ (which we call \emph{predicates} over $I$), $\leq^I$ is the point-wise order ($f\leq^I g$ if and only if $f(i)\leq g(i)$ for every $i\in I$), for every $f,g\in \mathcal{A}^I$ and $i\in I$, the function $f\to^I g$ is defined by $(f\to^I g)(i):=f(i)\to g(i)$, and 
$\Sigma[I]\subseteq \mathcal{A}^I$ is the so-called \emph{uniform power separator}  defined as:
\[ \Sigma[I]:=\{f\in A^I |\exists s\in \Sigma, \forall i\in I, s\leq f(i)\}=\{f\in A^I|\,\bigwedge_{i\in I} f(i)\in \Sigma\}.\]

As we have already seen, given an implicative algebra $(\mathcal{A},\leq,\to,\Sigma)$, we have an induced binary relation of \emph{entailment} on $\mathcal{A}$, written $a\vdash_\Sigma b$ and defined by
\[a \vdash_\Sigma b \iff (a\to b)\in \Sigma.\]
It is direct to check that this binary relation gives a preorder $(\mathcal{A},\vdash_\Sigma)$ on $\mathcal{A}$. 

In~\cite[Sec. 4]{miquel_2020} it is shown that each implicative algebra $(\mathcal{A},\leq,\to,\Sigma)$ induces a tripos $\hyperdoctrine{\set}{\mathsf{P}}$ defined as follows:
\begin{definition}[implicative tripos]\label{def:implicative tripos}
   Let $(\mathcal{A},\leq,\to, \Sigma)$ be an implicative algebra. For each set $I$ the Heyting algebra $\mathsf{P}(I)$ is given by the posetal reflection of the preorder $(\mathcal{A}^I, \vdash_{\Sigma [I]})$. For each function $\freccia{I}{f}{X}$, the functor $\freccia{\mathsf{P}(X)}{\mathsf{P}_f}{\mathsf{P}(I)}$ acts by precomposition, that is $P_f([g]):=[g\circ f]$ for every $f:I\rightarrow \mathcal{A}$. 
\end{definition}
The functor defined in Definition~\ref{def:implicative tripos} can be proved to be a $\set$-based tripos, see~\cite[Sec. 4]{miquel_2020}, and it is called \emph{implicative tripos}.

In~\cite[Thm. 1.1]{miquel_2020_2} Miquel proved that the notion of implicative tripos is general enough to encompass all $\set$-based triposes. In particular we have the following result:
\begin{theorem}\label{thm:every set base tripos is an implicative tripos}
   Every $\set$-based tripos is isomorphic to an implicative tripos.
\end{theorem}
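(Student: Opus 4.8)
The plan is to reconstruct an implicative algebra $\mathbb{A}=(\mathcal{A},\leq,\to,\Sigma)$ out of the given tripos and then identify its implicative tripos $\mathsf{P}_{\mathbb{A}}$ with $\sP$, following Miquel \cite{miquel_2020_2}. First fix a generic predicate of $\sP$: a set $\mathbb{P}$ and $\mathrm{tr}\in\sP(\mathbb{P})$ such that every $\alpha\in\sP(X)$ equals $\sP_f(\mathrm{tr})$ for some $f\colon X\to\mathbb{P}$ — not uniquely, and this slack is exactly what prevents $\sP$ from being localic. An implicative algebra must sit on a complete lattice, whereas $\mathbb{P}$ is a bare set, so the first move is to replace $\mathbb{P}$ by a set $\mathcal{A}$ built from it: concretely one takes for $\mathcal{A}$ a power set (or a quotient thereof) of $\mathbb{P}$, ordered by inclusion, so that it becomes a complete lattice, together with an extension $\overline{\mathrm{tr}}\in\sP(\mathcal{A})$ of $\mathrm{tr}$ obtained by applying the quantifiers of $\sP$ along the membership relation $E=\{(p,a):p\in a\}$ and its two projections. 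Since $\overline{\mathrm{tr}}$ restricts to $\mathrm{tr}$ along the singleton embedding $\mathbb{P}\hookrightarrow\mathcal{A}$, it is again generic. This ``intensional'' completion is modelled on the realizability case, where the implicative algebra is exactly $(\mathcal{P}(R),\subseteq,\Rightarrow,\mathcal{P}(R)\setminus\{\emptyset\})$ and $\mathcal{A}=\mathcal{P}(R)$ already carries both the complete lattice and the generic predicate $A\mapsto A$.

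Next, using genericity of $\overline{\mathrm{tr}}$ one produces the remaining structure from $\sP$: a binary operation $\to$ on $\mathcal{A}$ representing the predicate $\sP_{\pi_1}(\overline{\mathrm{tr}})\to\sP_{\pi_2}(\overline{\mathrm{tr}})$ over $\mathcal{A}\times\mathcal{A}$, and the separator
\[ \Sigma:=\{\,a\in\mathcal{A}\ \mid\ \sP_{\bar a}(\overline{\mathrm{tr}})=\top\ \text{in}\ \sP(1)\,\}. \]
The two axioms of an implicative structure — monotonicity of $\to$ and its commutation with infima in the second variable — hold because the reindexing functors of $\sP$ preserve the Heyting implication and because $\overline{\mathrm{tr}}$ was built compatibly with the infima of $\mathcal{A}$. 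For the separator: upward closure is immediate; $\mathbf{k}^{\mathbb{A}},\mathbf{s}^{\mathbb{A}}\in\Sigma$ because under $\overline{\mathrm{tr}}$ they become the interpretations in the Heyting algebra $\sP(1)$ of the tautologies $p\to q\to p$ and $(p\to q\to r)\to(p\to q)\to p\to r$ universally quantified over propositions, which evaluate to $\top$ (here one uses that $\overline{\mathrm{tr}}$ carries infima of $\mathcal{A}$ to universal quantifications in $\sP$ and that $\{\overline{\mathrm{tr}}(a):a\in\mathcal{A}\}$ exhausts $\sP(1)$ by genericity); and closure under modus ponens is the inequality $P\wedge(P\to Q)\leq Q$ in $\sP(1)$. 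Thus $\mathbb{A}$ is an implicative algebra.

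Finally, define $\Phi_I\colon\sP(I)\to\mathsf{P}_{\mathbb{A}}(I)$ by $\Phi_I(\alpha):=[f]$ for any $f\colon I\to\mathcal{A}$ with $\sP_f(\overline{\mathrm{tr}})=\alpha$; such $f$ exists by genericity of $\overline{\mathrm{tr}}$. The heart of the proof is the identity
\[ \bigwedge\nolimits^{\mathcal{A}}_{i\in I}\bigl(g(i)\to h(i)\bigr)\in\Sigma \iff \sP_g(\overline{\mathrm{tr}})\leq\sP_h(\overline{\mathrm{tr}})\ \text{in}\ \sP(I), \]
i.e.\ that membership of the pointwise implication $i\mapsto g(i)\to h(i)$ in the \emph{uniform power separator} $\Sigma[I]$ recovers exactly the (in general non-pointwise) entailment of $\sP$ over $I$. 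This holds because $\to$ on $\mathcal{A}$ comes from a generic implication of $\sP$, hence behaves pointwise, while the infimum $\bigwedge^{\mathcal{A}}$ comes from the universal quantifiers of $\sP$ and so is subject to Beck--Chevalley, hence behaves ``uniformly'' — matching the two roles these play in $\vdash_{\Sigma[I]}$ and in the implicative tripos (\Cref{def:implicative tripos}). Granting the identity, $\Phi_I$ is a well-defined order isomorphism (any two representatives of $\alpha$ are $\Sigma[I]$-equivalent; surjectivity onto the fibre is genericity of $\overline{\mathrm{tr}}$ over $I$), it is natural in $I$ since reindexing on both sides is precomposition and representing functions pull back along maps of index sets, and a natural fibrewise order isomorphism between triposes automatically preserves the Heyting operations and commutes with $\exists_u,\forall_u$; hence $\Phi$ is an isomorphism of triposes.

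\textbf{The main obstacle.} Outside the construction, the verifications are routine manipulations in the tripos/implicative calculus and in $\sP(1)$. The delicate part is building $\mathcal{A}$, $\overline{\mathrm{tr}}$ and $\to$ with exactly the right compatibilities, so that the ``uniform'' reading of $\mathcal{A}$ over an index set reproduces the non-pointwise entailment of $\sP(I)$. The obvious shortcut — taking $\mathcal{A}$ to be the poset reflection of $\mathbb{P}$ under pointwise entailment and $\Sigma=\{\top\}$ — fails, since it collapses $\sP$ to a forcing tripos; so $\mathcal{A}$ must stay intensional, remembering strictly more than the truth value of a proposition, just as $\mathcal{P}(R)$ remembers more than the bare fact ``$A\neq\emptyset$'' in realizability. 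This is the substance of \cite[Thm.~1.1]{miquel_2020_2}, to which we refer for the details.
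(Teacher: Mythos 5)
The paper offers no proof of this theorem at all: it is imported directly from Miquel's \cite[Thm.~1.1]{miquel_2020_2}, which is exactly where your proposal also ends up, so the two take the same route. Your sketch of Miquel's construction (fix a generic predicate, fatten its index set into a complete lattice of "intensional" propositions, define $\to$ and the separator of elements interpreted as $\top$ in $\sP(1)$, and prove that membership of the pointwise implication in $\Sigma[I]$ coincides with entailment in $\sP(I)$) is a faithful outline of the cited argument — modulo the convention that Miquel orders the powerset by reverse inclusion so that lattice infima are unions — and since the hard verifications are explicitly deferred to the reference, just as the paper defers the entire statement, there is nothing to fault relative to the paper.
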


\begin{example}[realizability tripos]
    The realizability tripos introduced in \cite{TT} corresponds to the implicative tripos arising from the implicative algebra given by a partial combinatory algebra \Cref{subse_examples_IA}.
\end{example}
\begin{example}[localic tripos]
    The localic tripos introduced in \cite{TT} corresponds to the implicative tripos arising from the implicative algebra given by a complete Heyting algebra \Cref{subse_examples_IA}.
\end{example}

\subsection{Supercompact predicates of implicative triposes}
In this section we present the various notions of supercompact family of an implicative algebra introduced in \Cref{sec_topological_notions_IA} using the logical language underlying the notion of tripos. Since the properties considered in subsections \ref{SKsec} and \ref{Indsec} are stable under equivalence $\equiv_{\Sigma[I]}$ we will abuse of notation in the following results by considering predicates as functions rather than equivalence classes of functions.

We start by fixing the following notation:
\begin{definition}
     Let $\hyperdoctrine{\set}{\sP}$ be an implicative tripos. A predicate $\phi$ of $\sP(I\times J)$ is said to be 
    a \textbf{functional predicate } if
     \[P_{\angbr{\pr_1}{\pr_2}}(\phi)\wedge P_{\angbr{\pr_1}{\pr_3}}(\phi)\leq P_{\angbr{\pr_2}{\pr_3}}(\delta_J)\]
     where the domain of the projections is $I\times J\times J$;
     
\end{definition}

\begin{definition}\label{def_supercomp_in_triposes}
    Let $\hyperdoctrine{\set}{\sP}$ be an implicative tripos. A predicate $\varphi$ of $\sP(I)$ is:
    \begin{itemize}
        \item a \textbf{supercompact predicate} $\mathbf{(SK_p)}$ if whenever $\varphi \leq \exists_f (\psi)$ with $\freccia{J}{f}{I}$ and $\psi$ element of $\sP(J)$, there exists a function $\freccia{I}{g}{J}$ such that $\varphi \leq \sP_g (\psi)$ and $f\circ g=\id_I$;
        \item a \textbf{functionally supercompact predicate} $\mathbf{(fSK_p)}$ if for every functional predicate $\phi$ of $\sP (I\times  J)$, if $\varphi \leq \exists_{\pr_I}(\phi)$, then there exists a unique function $\freccia{I}{f}{J}$ such that $\varphi\leq \sP_{\angbr{\id_I}{f}}(\phi)$;
          \item a \textbf{weakly functionally supercompact predicate} $\mathbf{(wfSK_p)}$ if for every functional predicate $\phi$ of $\sP (I\times J)$, if $\varphi \leq \exists_{\pr_I}(\phi)$, then there exists a function $\freccia{I}{f}{J}$ such that $\varphi\leq \sP_{\angbr{\id_I}{f}}(\phi)$;
    \end{itemize}
\end{definition}
In the language of triposes, the ``uniformity'' property can be presented as a \emph{stability under re-indexing condition}:
\begin{definition}
    Let $\hyperdoctrine{\set}{\sP}$ be an implicative tripos. A predicate $\varphi$ of $\sP(I)$ is:
    \begin{itemize}
    
        \item  a \textbf{uniformly supercompact predicate}  $\mathbf{(U\mbox{-}SK_p)}$ if $\sP_f(\varphi) $ is a supercompact predicate for every function $\freccia{J}{f}{I}$;
        \item a \textbf{uniformly functionally supercompact predicate}  $\mathbf{(U\mbox{-}fSK_p)}$ if $\sP_f(\varphi) $ is a functionally supercompact predicate for every function $\freccia{J}{f}{I}$;
        \item a \textbf{uniformly weakly functionally supercompact predicate}  $\mathbf{(U\mbox{-}wfSK_p)}$ if $\sP_f(\varphi) $ is a weakly functionally supercompact predicate for every function $\freccia{J}{f}{I}$;
    \end{itemize}
\end{definition}
\begin{proposition}\label{pro_super_comp_in_tripos_vs_IA}
    Let $\hyperdoctrine{\set}{\sP}$ be an implicative tripos, and let $\varphi$ be a predicate of $\sP(I)$. We have that:
    \begin{enumerate}
        \item  $\varphi$ is $\mathbf{(SK_p)}$  if and only if $(\varphi (i))_{i\in I}$ is $\mathbf{(SK)}$;
        \item  $\varphi$ is  $\mathbf{(fSK_p)}$  if and only if $(\varphi (i))_{i\in I}$ is $\mathbf{(fSK)}$;
        \item  $\varphi$ is  $\mathbf{(wfSK_p)}$  if and only if $(\varphi (i))_{i\in I}$ is $\mathbf{(wfSK)}$.
        
    \end{enumerate}
\end{proposition}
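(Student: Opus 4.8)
The plan is to translate each condition of \Cref{def_supercomp_in_triposes} into a statement about the implicative algebra $\mathbb{A}$, using the explicit description of the implicative tripos from \cite{miquel_2020}, and to check that the translation is, on the nose, the corresponding condition on the family $(\varphi(i))_{i\in I}$. Recall that a predicate of $\sP(K)$ is the class of a function $K\to\mathcal{A}$; that $[\alpha]\leq[\beta]$ in $\sP(K)$ amounts to $\bigwedge_{k\in K}(\alpha(k)\to\beta(k))\in\Sigma$; that $\sP_g$ acts by precomposition, $\sP_g([\psi])=[\psi\circ g]$; and that, for a function $\freccia{J}{f}{I}$, the left adjoint $\exists_f$ sends $[\psi]$ to the class of $i\mapsto\bigexists_{j\in f^{-1}(i)}\psi(j)$. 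In particular, for a predicate $\phi$ of $\sP(I\times J)$, writing $b^i_j:=\phi(i,j)$, one gets $\exists_{\pr_I}(\phi)=[\,i\mapsto\bigexists_{j\in J}b^i_j\,]$ and $\sP_{\angbr{\id_I}{f}}(\phi)=[\,i\mapsto b^i_{f(i)}\,]$ for every $\freccia{I}{f}{J}$, while the reindexings appearing in the definition of a functional predicate act by the obvious substitutions on representatives.

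First I would prove item $1$. Given a family of families $((b^i_j)_{j\in J_i})_{i\in I}$ of elements of $\mathcal{A}$, set $J:=\{(i,j)\mid i\in I,\ j\in J_i\}$, let $\freccia{J}{f}{I}$ be the first projection, and put $\psi(i,j):=b^i_j$. Then $f^{-1}(i)=\{i\}\times J_i$, so the hypothesis $\bigwedge_{i\in I}(\varphi(i)\to\bigexists_{j\in J_i}b^i_j)\in\Sigma$ occurring in the $\mathbf{(SK)}$ condition is precisely $\varphi\leq\exists_f(\psi)$; moreover a section $g$ of $f$ is the same datum as a choice function $h\in(\Pi i\in I)J_i$, through $g(i)=(i,h(i))$, and under this correspondence $\varphi\leq\sP_g(\psi)$ unfolds to $\bigwedge_{i\in I}(\varphi(i)\to b^i_{h(i)})\in\Sigma$. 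Conversely, an arbitrary $\freccia{J}{f}{I}$ with $\psi\in\sP(J)$ arises in this way by taking $J_i:=f^{-1}(i)$ and $b^i_j:=\psi(j)$, the condition $f\circ g=\id_I$ corresponding exactly to $h(i)\in J_i$. Hence $\varphi$ is $\mathbf{(SK_p)}$ if and only if $(\varphi(i))_{i\in I}$ is $\mathbf{(SK)}$; in particular the two conditions fail together when some $\varphi(i)\equiv_\Sigma\bot$, since then no section of the associated $f$ exists and, symmetrically, $(\Pi i\in I)J_i=\emptyset$.

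For items $2$ and $3$ the index set $J$ is fixed, and a predicate $\phi$ of $\sP(I\times J)$ is exactly a two-indexed family $(b^i_j)_{i\in I,\,j\in J}$ with $b^i_j=\phi(i,j)$. The extra ingredient needed here is the translation of the notion of \emph{functional predicate}: since the Heyting conjunction of $\sP$ is computed pointwise and $a\wedge b\equiv_\Sigma a\times b$ holds uniformly in $a,b\in\mathcal{A}$, and since the tripos equality predicate $\delta_J=\exists_{\Delta_J}(\top)$ is represented precisely by the element $\delta_J(j,j')$ of $\mathbb{A}$ introduced before \Cref{equalitygen} (an immediate computation of $\exists_{\Delta_J}$), unfolding the inequality $P_{\angbr{\pr_1}{\pr_2}}(\phi)\wedge P_{\angbr{\pr_1}{\pr_3}}(\phi)\leq P_{\angbr{\pr_2}{\pr_3}}(\delta_J)$ over $I\times J\times J$ yields exactly $\bigwedge_{i\in I}\bigwedge_{j,j'\in J}(b^i_j\times b^i_{j'}\to\delta_J(j,j'))\in\Sigma$, i.e.\ that $(b^i_j)_{i\in I,\,j\in J}$ is $\times$-functional. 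Granting this, $\varphi\leq\exists_{\pr_I}(\phi)$ translates to $\bigwedge_{i\in I}(\varphi(i)\to\bigexists_{j\in J}b^i_j)\in\Sigma$ and $\varphi\leq\sP_{\angbr{\id_I}{f}}(\phi)$ to $\bigwedge_{i\in I}(\varphi(i)\to b^i_{f(i)})\in\Sigma$ for $\freccia{I}{f}{J}$, so the definition of $\mathbf{(fSK_p)}$ (respectively $\mathbf{(wfSK_p)}$) becomes verbatim that of $\mathbf{(fSK)}$ (respectively $\mathbf{(wfSK)}$) for $(\varphi(i))_{i\in I}$; note that the uniqueness clause in $\mathbf{(fSK_p)}$ is uniqueness of a set function $\freccia{I}{f}{J}$, matching the one in $\mathbf{(fSK)}$.

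I expect the main obstacle to be the bookkeeping behind the functional-predicate translation: identifying, inside $\mathbb{A}$, the representatives of the Heyting conjunction of $\sP$ and of the equality predicate $\delta_J=\exists_{\Delta_J}(\top)$ (the latter being exactly why the element of $\mathbb{A}$ is denoted $\delta_J$), and checking that the reindexings $P_{\angbr{\pr_1}{\pr_2}}$, $P_{\angbr{\pr_1}{\pr_3}}$, $P_{\angbr{\pr_2}{\pr_3}}$ act by the evident substitutions on representatives. Everything else is a direct unwinding of \Cref{def_supercomp_in_triposes}, the only mildly nontrivial point being the passage between a family $(J_i)_{i\in I}$ of index sets equipped with a section and the single coproduct set $\coprod_{i\in I}J_i$ equipped with a map to $I$ and a section — a manipulation already carried out in the proof of \Cref{carusk}.
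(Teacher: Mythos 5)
Your proof is correct and takes essentially the same route as the paper's: item 1 is handled by exactly the same construction (the coproduct $\coprod_{i\in I}J_i$ with the first projection as $g$ and the predicate $(i,j)\mapsto b^i_j$), and items 2--3, which the paper dismisses as ``similar arguments,'' are the direct unwinding you describe, including the correct identification of functional predicates with $\times$-functional families via $a\wedge b\equiv_\Sigma a\times b$ and $\delta_J=\exists_{\Delta_J}(\top)$. No gaps.
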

\begin{proof}
The proofs are straightforward. We provide just the proof of the first point, since the other two follow by similar arguments.

 Suppose that $\varphi$ is $\mathbf{(SK_p)}$, and let us consider a family $((b_j^i)_{j\in J_i})_{i\in I}$, with
    \begin{equation}\label{eq_1_prop_ex_free_iff_I_unif_supercompact}
        \bigwedge_{i\in I}(\varphi(i)\to \bigexists_{j\in J_i} b^i_j)\in \Sigma.
    \end{equation}
    Now let us define by $g: \coprod_{i \in I} J_i\to I$ the function sending an element $(i,j)$ to $i$, and by $\phi\in \mathsf{P}(\coprod_{i \in I} J_i)$ the predicate sending $(i,j)$ to $ b^i_j$. By definition of the left adjoints $\exists_f$ in an implicative tripos, we have that \eqref{eq_1_prop_ex_free_iff_I_unif_supercompact} is equivalent to
      \begin{equation}\label{eq_2_prop_ex_free_iff_I_unif_supercompact}
       \varphi \vdash_{\Sigma [I]} \exists_g \phi.
    \end{equation}
    Hence, by definition of $\mathbf{(SK_p)}$, there exists a function $f:I\to \coprod_{i\in I}J_i$ such that $g\circ f=\id_I$, and 
    \begin{equation}\label{eq_3_prop_ex_free_iff_I_unif_supercompact}
       \varphi \vdash_{\Sigma [I]} \mathsf{P}_f (\phi).
    \end{equation}
    By definition, we have that \eqref{eq_3_prop_ex_free_iff_I_unif_supercompact} means 
     \begin{equation}\label{eq_4_prop_ex_free_iff_I_unif_supercompact}
   \bigwedge_{i\in I} ( \varphi (i) \to \phi(f(i)))\in \Sigma .
    \end{equation}
   By definition of $\phi$, and since the second component $f_2(i)$ of $f(i)$ is in $J_i$, we can conclude from \eqref{eq_4_prop_ex_free_iff_I_unif_supercompact} that
   \[ \bigwedge_{i\in I} ( \varphi (i) \to b^i_{f_2(i)})\in \Sigma \]
   i.e. that the family $(\varphi(i))_{i \in I}$ is $\mathbf{(SK)}$.
   
   Employing a similar argument one can check that the converse holds too.
\end{proof}
Using the previous result and the fact that $\mathsf{P}$ acts on arrows by reindexing we get the following proposition.
\begin{proposition}\label{FEF1}
    Let $\hyperdoctrine{\set}{\sP}$ be an implicative tripos, and let $\varphi$ be a predicate of $\sP(I)$. We have that:
    \begin{enumerate}
        \item  $\varphi$ is $\mathbf{(U\mbox{-}SK_p)}$  if and only if $(\varphi (i))_{i\in I}$ is $\mathbf{(U\mbox{-}SK)}$;
        \item  $\varphi$ is  $\mathbf{(U\mbox{-}fSK_p)}$  if and only if $(\varphi (i))_{i\in I}$ is $\mathbf{(U\mbox{-}fSK)}$;
        \item  $\varphi$ is  $\mathbf{(U\mbox{-}wfSK_p)}$  if and only if $(\varphi (i))_{i\in I}$ is $\mathbf{(U\mbox{-}wfSK)}$.
        
    \end{enumerate}
\end{proposition}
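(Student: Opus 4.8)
The plan is to derive both directions of each equivalence directly from Proposition~\ref{pro_super_comp_in_tripos_vs_IA} by unwinding what ``uniformity'' means on each side. First I would recall the two definitions being compared: on the tripos side, $\varphi\in\sP(I)$ is $\mathbf{(U\mbox{-}SK_p)}$ exactly when $\sP_f(\varphi)$ is $\mathbf{(SK_p)}$ for every function $\freccia{J}{f}{I}$, and analogously for the $\mathbf{(fSK_p)}$ and $\mathbf{(wfSK_p)}$ variants; on the algebra side, $(\varphi(i))_{i\in I}$ is $\mathbf{(U\mbox{-}SK)}$ exactly when the reindexed family $(\varphi(f(k)))_{k\in K}$ is $\mathbf{(SK)}$ for every function $f\colon K\to I$, and analogously for $\mathbf{fSK}$ and $\mathbf{wfSK}$.

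The bridge between the two sides is the observation that, since the implicative tripos acts on morphisms by precomposition (Definition~\ref{def:implicative tripos}), the predicate $\sP_f(\varphi)\in\sP(J)$ is represented by the function $\varphi\circ f\colon J\to\mathcal{A}$, so that the set-indexed family attached to it is precisely $(\varphi(f(j)))_{j\in J}$. Applying Proposition~\ref{pro_super_comp_in_tripos_vs_IA}(1) to the predicate $\sP_f(\varphi)$ of $\sP(J)$ then gives that $\sP_f(\varphi)$ is $\mathbf{(SK_p)}$ if and only if $(\varphi(f(j)))_{j\in J}$ is $\mathbf{(SK)}$. Quantifying over all $\freccia{J}{f}{I}$ turns ``$\sP_f(\varphi)$ is $\mathbf{(SK_p)}$ for every $f$'' into ``$(\varphi(f(j)))_{j\in J}$ is $\mathbf{(SK)}$ for every $f$'', which is exactly the definition of $(\varphi(i))_{i\in I}$ being $\mathbf{(U\mbox{-}SK)}$. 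This settles point~(1), and points~(2) and~(3) follow verbatim, invoking parts~(2) and~(3) of Proposition~\ref{pro_super_comp_in_tripos_vs_IA} in place of part~(1).

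I do not expect a genuine obstacle here: the only point requiring care is the compatibility of reindexing along $f$ with passage to the associated family, i.e.\ that $\sP_f(\varphi)(j)=\varphi(f(j))$, which is immediate from the description of $\sP_f$ as precomposition. The one bookkeeping remark worth stating explicitly is that the quantifier ``over all functions into $I$'' matches on the two sides, so that no strengthening or weakening of the uniformity condition is introduced in the translation; this is automatic because $\varphi\mapsto(\varphi(i))_{i\in I}$ and its reindexed versions carry literally the same data.
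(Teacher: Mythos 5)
Your proposal is correct and follows exactly the route the paper intends: the paper gives no separate argument for this proposition beyond the remark that it follows from Proposition~\ref{pro_super_comp_in_tripos_vs_IA} together with the fact that $\sP$ acts by precomposition, which is precisely the reduction you carry out. The only point worth keeping in mind is the one the paper already flags at the start of the subsection, namely that predicates are really equivalence classes and one is abusing notation by treating them as functions, which is harmless since the properties involved are stable under $\equiv_{\Sigma[I]}$.
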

Combining the previous proposition with \Cref{cor_UwfSK_iff_UfSK} we obtain the following corollary:
\begin{corollary}\label{cor_UwfSK_p_iff_UfSK}
    A predicate of an implicative tripos is  $\mathbf{(U\mbox{-}wfSK_p)}$ if and only if it is  $\mathbf{(U\mbox{-}fSK_p)}$
\end{corollary}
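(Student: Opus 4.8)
The plan is to transport the claimed equivalence along the dictionary between predicates of an implicative tripos and families of elements of the underlying implicative algebra that was set up in \Cref{FEF1}. Concretely, I would fix a predicate $\varphi$ of $\sP(I)$ for the implicative tripos associated with $\mathbb{A}=(\mathcal{A},\leq,\to,\Sigma)$. By the second point of \Cref{FEF1}, $\varphi$ is $\mathbf{(U\mbox{-}fSK_p)}$ if and only if the family $(\varphi(i))_{i\in I}$ of elements of $\mathcal{A}$ is $\mathbf{U\mbox{-}fSK}$, and by the third point, $\varphi$ is $\mathbf{(U\mbox{-}wfSK_p)}$ if and only if $(\varphi(i))_{i\in I}$ is $\mathbf{U\mbox{-}wfSK}$. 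Then I would invoke \Cref{cor_UwfSK_iff_UfSK}, which states that a family of elements of an implicative algebra is $\mathbf{U\mbox{-}wfSK}$ exactly when it is $\mathbf{U\mbox{-}fSK}$; chaining the three equivalences yields the statement.

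It is worth noting that one implication, namely that $\mathbf{(U\mbox{-}fSK_p)}$ entails $\mathbf{(U\mbox{-}wfSK_p)}$, is already immediate from the definitions: every functionally supercompact predicate is weakly functionally supercompact (one simply forgets the uniqueness of the witnessing function), and this property is inherited by all reindexings, so the uniform versions compare in the same way. Hence all the content lies in the converse, which is precisely what \Cref{cor_UwfSK_iff_UfSK} — ultimately resting on \Cref{nminwfsk} together with the fact that a $\mathbf{U\mbox{-}wfSK}$ family cannot have any component $\equiv_\Sigma \bot$ — provides.

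I do not expect any real obstacle here; the proof is a routine transport along an already-established correspondence. The only point requiring a word of care is that \Cref{FEF1} identifies $\varphi$ with the honest function $i\mapsto\varphi(i)$ rather than with its class modulo $\equiv_{\Sigma[I]}$, but this is harmless because all of $\mathbf{(SK_p)}$, $\mathbf{(fSK_p)}$, $\mathbf{(wfSK_p)}$ and their uniform variants are invariant under $\equiv_{\Sigma[I]}$, as remarked at the start of the subsection.
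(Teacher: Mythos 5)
Your argument is exactly the paper's: the corollary is obtained by combining \Cref{FEF1} with \Cref{cor_UwfSK_iff_UfSK}, and your remarks about the easy direction and the invariance under $\equiv_{\Sigma[I]}$ are consistent with the conventions the paper fixes at the start of that subsection. The proposal is correct and matches the intended proof.
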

\begin{remark}\label{rem_SK_p_are_FEF_in_realizability}
    Notice that the $\mathbf{(SK_p)}$ and $\mathbf{(U\mbox{-}SK_p)}$ predicates of a tripos are precisely the elements called \emph{full existential splitting} and \emph{full existential free} respectively in \cite{MaiettiTrotta21}. The $\mathbf{(U\mbox{-}SK_p)}$ predicates of a tripos coincide also with those predicates called $\exists$-prime predicates introduced in \cite{Frey2014AFS}. 
    
\end{remark}
\begin{example}
In the case of triposes for complete Heyting algebras with $\Sigma=\{\top\}$, we obtain that  $\mathbf{(SK_p)}$ and $\mathbf{(U\mbox{-}SK_p)}$ predicates coincide and are exactly predicates $\varphi$ such that $(\varphi (i))_{i\in I}$ is $\mathbf{(cSK)}$ since $\Sigma$ is clearly closed under arbitrary infima; hence we re-obtain \cite[Lem. 7.31]{MaiettiTrotta21}.
\end{example}

\begin{example} Combining \Cref{pro_super_comp_in_tripos_vs_IA} and  \Cref{FEF1} with examples in \ref{sect_supercompact_in_realizability}, 
we have that in the triposes arising from implicative algebras coming from realizability, relative realizability and nested realizability,  $\mathbf{(SK_p)}$ predicates coincide with $\mathbf{(U\mbox{-}SK_p)}$ predicates. In the case of realizability they are exactly those predicates (equivalent to) singleton predicates (that is predicates $\alpha$ for which $\alpha(x)$ is always a singleton). 
This characterization was already provided in \cite{MaiettiTrotta21}.





\end{example}





\section{Partitioned assemblies and assemblies for implicative triposes}
In this section we introduce the notion of \emph{partitioned assemblies} and \emph{assemblies} for implicative triposes. 

Before starting our analysis, we recap here some useful notions and results regarding the \emph{Grothendieck category} of an implicative tripos
\subsection{Some properties of the Grothendieck category of an implicative tripos}
\begin{definition}[Grothendieck category]
    Let $\hyperdoctrine{\set}{\sP}$ be an implicative tripos. The \textbf{Grothendieck category} $\Gamma[\mathsf{P}]$ of $\mathsf{P}$ is the category whose objects are pairs $(A,\alpha)$ with $\alpha\in \mathsf{P}(A)$ and whose arrows from $(A,\alpha)$ to $(B,\beta)$ are arrows $f:A\rightarrow B$ in $\mC$ such that $\alpha\leq \mathsf{P}_{f}(\beta)$.
\end{definition}
It is direct to check that for every implicative tripos $\hyperdoctrine{\set}{\sP}$, we have an adjunction 
\[\begin{tikzcd}
	\Gamma[\sP] && \set
	\arrow[""{name=0, anchor=center, inner sep=0}, "U", curve={height=-12pt}, from=1-1, to=1-3]
	\arrow[""{name=1, anchor=center, inner sep=0},"\Delta", curve={height=-12pt}, hook', from=1-3, to=1-1]
	\arrow["\dashv"{anchor=center, rotate=-90}, draw=none, from=0, to=1]
\end{tikzcd}\]
where $U(X,\varphi):=X$, $U(f):=f$, $\Delta (X):=(X,\top_X)$ and $\Delta(f):=f$.
\begin{proposition}\label{regepigamma}
 Let $\hyperdoctrine{\set}{\sP}$ be an implicative tripos. Then, the regular epis in $\Gamma[\mathsf{P}]$ are arrows $f:(A,\alpha)\rightarrow (B,\beta)$ such that $f$ is a regular epi in $\set$ and $\beta=\exists_f(\alpha)$.
\end{proposition}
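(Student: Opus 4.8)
The plan is to characterize the regular epimorphisms in $\Gamma[\sP]$ by first understanding arbitrary epis and the (regular epi, mono) factorization, then identifying regular epis as those maps that are already equal to the epi part of their own factorization. The key observation is that the forgetful functor $U\colon \Gamma[\sP]\to\set$ together with its left adjoint $\Delta$ lets us transport factorizations: given $f\colon(A,\alpha)\to(B,\beta)$, factor the underlying function $f\colon A\to B$ in $\set$ as $A\xrightarrow{e}I\xrightarrow{m}B$ with $e$ a (regular) epi and $m$ a mono. I would then equip $I$ with the predicate $\exists_e(\alpha)\in\sP(I)$; since $\exists_e\dashv \sP_e$ and $\alpha\le\sP_e(\exists_e(\alpha))$, the map $e$ lifts to an arrow $(A,\alpha)\to(I,\exists_e(\alpha))$, and since $\exists_f=\exists_m\exists_e$ and $m$ is injective (so $\exists_m$ is essentially an "extension by $\bot$" and the counit $\exists_m\sP_m(\gamma)\le\gamma$ is available, together with $\exists_e(\alpha)\le\sP_m(\exists_m\exists_e(\alpha))=\sP_m(\beta)$) the map $m$ lifts to an arrow $(I,\exists_e(\alpha))\to(B,\beta)$. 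This exhibits a factorization of $f$ in $\Gamma[\sP]$.

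Next I would argue that in this factorization the first factor $(A,\alpha)\to(I,\exists_e(\alpha))$ is a regular epi and the second factor $(I,\exists_e(\alpha))\to(B,\beta)$ is a monomorphism. For the mono part: $U$ is faithful and reflects monos among maps whose underlying function is mono, and $m$ is injective in $\set$; one checks directly that an arrow into $(B,\beta)$ whose underlying map is injective is monic in $\Gamma[\sP]$. For the regular-epi part: I would show that any arrow of the form $g\colon(A,\alpha)\to(C,\exists_g(\alpha))$ with $g$ a surjection in $\set$ is a coequalizer of its kernel pair. Concretely, the kernel pair of $g$ in $\Gamma[\sP]$ has underlying set the kernel pair $A\times_C A$ in $\set$ equipped with the appropriate restricted predicate, and one verifies the universal property of the coequalizer using that $g$ is a coequalizer in $\set$ together with the adjunction $\exists_g\dashv\sP_g$ to check compatibility with predicates. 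This gives that maps of the stated form are regular epi.

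It then remains to run the characterization both ways. If $f\colon(A,\alpha)\to(B,\beta)$ is such that the underlying $f$ is a regular epi in $\set$ and $\beta=\exists_f(\alpha)$, then $f$ is (up to iso) exactly the first factor of the factorization above, hence a regular epi in $\Gamma[\sP]$ by the previous paragraph. Conversely, if $f$ is a regular epi in $\Gamma[\sP]$, then since regular epis are in particular strong/extremal epis and the factorization $f=m\circ e$ (lifted as above) has $m$ monic, $m$ must be an isomorphism; this forces the underlying $f$ to be (isomorphic to) a regular epi $e$ in $\set$, hence a surjection, and forces $\beta\cong\exists_e(\alpha)=\exists_f(\alpha)$ in $\sP(B)$, i.e.\ $\beta=\exists_f(\alpha)$ after identifying along the isomorphism.

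The main obstacle I anticipate is the bookkeeping around the monomorphism $m$ and the predicate it carries: one needs that lifting $m$ to $(I,\exists_e(\alpha))\to(B,\beta)$ is legitimate, i.e.\ that $\exists_e(\alpha)\le\sP_m(\beta)$, which uses $\beta=\exists_f(\alpha)=\exists_m\exists_e(\alpha)$ and the unit $\exists_e(\alpha)\le\sP_m\exists_m(\exists_e(\alpha))$; and dually that when $f$ is already a regular epi, one genuinely recovers $\beta=\exists_f(\alpha)$ rather than merely $\exists_f(\alpha)\le\beta$ — this is where invertibility of the mono part $m$ is essential, and I would make sure the extremal-epi argument is airtight (regular epi $\Rightarrow$ extremal epi in any category, and an extremal epi that factors through a mono forces that mono to be iso). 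The verification that the kernel-pair coequalizer computation in $\Gamma[\sP]$ reduces cleanly to the one in $\set$ is routine but needs the Beck--Chevalley condition to match up the predicate on the kernel pair, so I would cite (BCC) from the tripos axioms at that point.
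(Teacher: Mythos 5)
Your proposal is correct, but it reaches the ``only if'' direction by a different route than the paper. The paper exploits the adjunction $U\dashv\Delta$: since the forgetful functor $U\colon\Gamma[\sP]\to\set$ is a left adjoint it preserves coequalizers, so the underlying map of a regular epi is a regular epi in $\set$; the identity $\beta=\exists_f(\alpha)$ is then obtained by a short direct comparison, noting that $f\colon(A,\alpha)\to(B,\exists_f(\alpha))$ also coequalizes the same pair, so $\id_B\colon(B,\beta)\to(B,\exists_f(\alpha))$ must be a morphism, giving $\beta\leq\exists_f(\alpha)$ on top of the obvious $\exists_f(\alpha)\leq\beta$. You instead lift the $\set$-factorization $f=m\circ e$ to $(A,\alpha)\to(I,\exists_e(\alpha))\to(B,\beta)$ and invoke ``regular $\Rightarrow$ extremal, so the mono part is invertible''; this works and is arguably more robust (it simultaneously sets up the (regular epi, mono) factorization used later in \Cref{gammafl}), at the cost of having to verify that the lifted $m$ is monic and that an iso in $\Gamma[\sP]$ forces $\beta=\exists_m\exists_e(\alpha)$. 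The ``if'' directions essentially coincide (both exhibit the map as a coequalizer; the paper reuses the pair of which $f$ is already a coequalizer in $\set$, you use the kernel pair, and indeed BCC is not actually needed there, only the adjunction $\exists_g\dashv\sP_g$). One small repair: when lifting $m$ for an \emph{arbitrary} arrow $f\colon(A,\alpha)\to(B,\beta)$ you may not assume $\exists_m\exists_e(\alpha)=\beta$, only $\exists_f(\alpha)\leq\beta$; the needed inequality $\exists_e(\alpha)\leq\sP_m(\beta)$ follows directly from $\alpha\leq\sP_f(\beta)=\sP_e\sP_m(\beta)$ and the adjunction, so nothing breaks, but your phrasing momentarily assumes the conclusion.
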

\begin{proof}
Since the forgetful functor $U:\Gamma[\mathsf{P}]\rightarrow \set$ is left adjoint to the functor $\Delta:\set\rightarrow \Gamma[\mathsf{P}]$, $U$ preserves colimits. In particular, if $f:(A,\alpha)\rightarrow (B,\beta)$ is a regular epi, that is the coequalizer of two arrows $g,h:(C,\gamma)\rightarrow (A,\alpha)$, then $f:A\rightarrow B$ is a coequalizer of $g,h:C\rightarrow A$ in $\set$. Let us show now that $\beta=\exists_f(\alpha)$. Since $\alpha\leq \mathsf{P}_f(\beta)$, then $\exists_{f}(\alpha)\leq \beta$. Moreover, we know that $\alpha\leq \mathsf{P}_{f}(\exists_{f}(\alpha))$. Thus $f:(A,\alpha)\rightarrow (B,\exists_f(\alpha))$ is a well-defined arrow which coequalizes $g,h:(C,\gamma)\rightarrow (A,\alpha)$. This implies that $\id_A:(A,\beta)\rightarrow (A,\exists_f(\alpha))$ must be a well-defined arrow in $\Gamma[\sP]$. Thus, $\beta\leq \exists_f(\alpha)$. Hence we get $\beta=\exists_f(\alpha)$. 

Conversely, let $f:A\rightarrow B$ be a regular epi in $\set$. Then it is the coequalizer in $\set$ of two arrows $g,h:C\rightarrow A$. Since $\alpha\leq \sP_f\exists_f(\alpha)$, the arrow $f:(A,\alpha)\rightarrow(B,\exists_f(\alpha))$ is well-defined. It is immediate to verify that this arrow is a coequalizer for the arrows $g,h:(X,\sP_g(\alpha)\wedge \sP_h(\alpha))\rightarrow (A,\alpha)$.
\end{proof}
\begin{proposition}\label{gammafl} Let $\hyperdoctrine{\set}{\sP}$ be an implicative tripos. Then $\Gamma[\mathsf{P}]$ is regular.
\end{proposition}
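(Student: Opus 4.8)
The plan is to verify the three conditions defining a regular category: that $\Gamma[\sP]$ has finite limits, that every morphism factors as a regular epimorphism followed by a monomorphism, and that regular epimorphisms are stable under pullback. For finite limits, I would take the terminal object to be $(\mathbf{1},\top)$, the product of $(A,\alpha)$ and $(B,\beta)$ to be $(A\times B,\ \sP_{\pr_A}(\alpha)\wedge\sP_{\pr_B}(\beta))$, and the equalizer of a parallel pair $\freccia{(A,\alpha)}{f,g}{(B,\beta)}$ to be $(E,\sP_e(\alpha))$ where $\freccia{E}{e}{A}$ is the equalizer of $f,g$ in $\set$; more conceptually, the forgetful functor $\freccia{\Gamma[\sP]}{U}{\set}$ is a fibration whose fibres $\sP(A)$ are meet-semilattices with top and whose reindexing functors preserve finite meets and $\top$, so since $\set$ is finitely complete, so is the total category $\Gamma[\sP]$. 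The routine checks that these are indeed the limits in $\Gamma[\sP]$ use only the adjunctions and the monotonicity of reindexing.

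For the factorization, given a morphism $\freccia{(A,\alpha)}{f}{(B,\beta)}$ I would take the image factorization $f=m\circ e$ in $\set$, with $\freccia{A}{e}{\im f}$ the canonical surjection and $\freccia{\im f}{m}{B}$ the inclusion, and equip $\im f$ with the predicate $\exists_e(\alpha)$. The unit $\alpha\leq\sP_e(\exists_e(\alpha))$ of $\exists_e\dashv\sP_e$ shows that $e$ is a morphism $(A,\alpha)\to(\im f,\exists_e(\alpha))$ of $\Gamma[\sP]$; transposing $\alpha\leq\sP_f(\beta)=\sP_e(\sP_m(\beta))$ along the same adjunction gives $\exists_e(\alpha)\leq\sP_m(\beta)$, so $m$ is a morphism $(\im f,\exists_e(\alpha))\to(B,\beta)$ too. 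By \Cref{regepigamma}, $e$ is a regular epimorphism, since its underlying function is surjective and its codomain carries exactly $\exists_e(\alpha)$; and $m$ is a monomorphism because its underlying function is injective --- any arrow of $\Gamma[\sP]$ with injective underlying function is mono. Thus $f=m\circ e$ exhibits $f$ as a regular epimorphism followed by a monomorphism.

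For pullback-stability, let $\freccia{(A,\alpha)}{f}{(B,\beta)}$ be a regular epimorphism, so that $f$ is surjective and $\beta=\exists_f(\alpha)$ by \Cref{regepigamma}, and let $\freccia{(C,\gamma)}{g}{(B,\beta)}$ be arbitrary. The pullback in $\Gamma[\sP]$ is $(P,\ \sP_p(\alpha)\wedge\sP_q(\gamma))$, where $P=A\times_B C$ with projections $\freccia{P}{p}{A}$ and $\freccia{P}{q}{C}$; the function $q$ is surjective in $\set$, being a pullback of the surjection $f$. Using Frobenius reciprocity and then the Beck--Chevalley condition for the pullback square,
\[\exists_q\bigl(\sP_p(\alpha)\wedge\sP_q(\gamma)\bigr)=\exists_q\bigl(\sP_p(\alpha)\bigr)\wedge\gamma=\sP_g\bigl(\exists_f(\alpha)\bigr)\wedge\gamma=\sP_g(\beta)\wedge\gamma=\gamma,\]
where the last equality holds because $\gamma\leq\sP_g(\beta)$, as $g$ is a morphism of $\Gamma[\sP]$. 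Hence $q$ is a regular epimorphism by \Cref{regepigamma}, which is exactly pullback-stability, so $\Gamma[\sP]$ is regular. The one step deserving care is this last computation: it rests essentially on $\sP$ being a tripos, hence satisfying both the Beck--Chevalley condition and Frobenius reciprocity; everything else is bookkeeping with the adjunctions $\exists_{(-)}\dashv\sP_{(-)}$ and the explicit descriptions of limits and of regular epimorphisms in $\Gamma[\sP]$ already available.
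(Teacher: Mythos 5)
Your proof is correct and follows essentially the same route as the paper: the same explicit constructions of the terminal object, products, and equalizers, and the same image factorization in $\set$ lifted to $\Gamma[\sP]$ via $\exists_e(\alpha)$ together with \Cref{regepigamma}. The only difference is that you spell out the pullback-stability computation (via Frobenius reciprocity and Beck--Chevalley) that the paper merely asserts, and that computation is correct.
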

\begin{proof}
We start by showing that $\Gamma[\mathsf{P}]$ has all finite limits. First, it is direct to check that  
$(1,\top_1)$ is a terminal object in $\Gamma[\mathsf{P}]$. A product of $(A,\alpha)$ and $(B,\beta)$ in $\Gamma[\mathsf{P}]$ is given by $(A\times B,\mathsf{P}_{\pi_1}(\alpha)\wedge\mathsf{P}_{\pi_2}(\beta) )$ together with the projections $\pi_1$ and $\pi_2$, while an equalizer of two parallel arrows $f,g:(A,\alpha)\rightarrow (B,\beta)$ in $\Gamma[\sP]$ is given by $(E,\mathsf{P}_e(\alpha))$ where $e:E\rightarrow A$ is an equalizer of $f,g$ in $\set$. 

Now we show that $\Gamma[\sP]$ is regular. Let $f:(A,\alpha)\rightarrow (B,\beta)$ be an arrow in $\Gamma[\sP]$. We can factorize $f:A\rightarrow B$ in $\set$ as a regular epi $r:A\rightarrow R$ followed by a mono $m:R\rightarrow B$. The arrow $r:(A,\alpha)\rightarrow (R,\exists_{r}(\alpha))$ is a regular epi in $\Gamma[\sP]$ by Proposition \ref{regepigamma} and $m:(R,\exists_{r}(\alpha))\rightarrow (B,\beta)$ is a mono. Such factorizations are unique up-to-isomorphism and pullback-stable in $\Gamma[\mathsf{P}]$. Thus $\Gamma[\sP]$ is a regular category.

\end{proof}
\begin{example}
    Let us consider an implicative tripos $\hyperdoctrine{\set}{\sP}$ for a complete Heyting algebra $\mathbb{H}$. Then the Grothendieck category $\Gamma [\sP]$ is the coproduct completion $\mathbb{H}_+$ of the category $\mathbb{H}$. This fact was observed in \cite{maiettitrotta2021arxiv}
\end{example}

\subsection{Partitioned assemblies}
The main purpose of this section is to generalize the notion of \emph{category of partitioned assemblies} associated with a PCA to implicative algebras and implicative triposes.

Let us recall that given a (partial) combinatory algebra $(R,\cdot)$ the category of \textbf{partitioned assemblies} is defined as follows:
\begin{itemize}
    \item an object is a pair $(X,\varphi)$ where $X$ is a set and $\freccia{X}{\varphi}{R}$ is a function from $X$ to the PCA;
    \item a morphism $\freccia{(X,\varphi)}{f}{(Y,\psi)}$ is a function $\freccia{X}{f}{Y}$ such that there exists an element $a\in R$ with $a\cdot \varphi(x)=\psi(  f(x))$ for every $x\in X$, i.e. there exists an element $a$ of the PCA such that the diagram
\[\begin{tikzcd}
	X & Y \\
	R & R
	\arrow["f", from=1-1, to=1-2]
	\arrow["\varphi"', from=1-1, to=2-1]
	\arrow["\psi", from=1-2, to=2-2]
	\arrow["a\cdot (-)"', dashed, from=2-1, to=2-2]
\end{tikzcd}\]
commutes.
\end{itemize}
It is proved in \cite{MaiettiTrotta21,maiettitrotta2021arxiv} that the category of partitioned assemblies can be completely defined in terms of realizability triposes and full existential free elements, namely it is the full subcategory of the Grothendieck category $\Gamma [\sP]$ whose second components is given by a full existential free element. 

Therefore we have that \Cref{rem_SK_p_are_FEF_in_realizability} suggests that the following definition provides a natural generalization of the ordinary notion of category of partitioned assemblies to an arbitrary implicative tripos:
\begin{definition}[partitioned assemblies]\label{def_partitioned_assemblies}
    Let $\hyperdoctrine{\set}{\sP}$ be an implicative tripos. We define the category of \textbf{partitioned assemblies}  $\mathbf{PAsm}_{\sP}$ of $\sP$ as the full sub-category of $\Gamma [\sP]$ given by the objects of $\Gamma [\sP]$ whose second component is a $(\mathbf{U\mbox{-}SK_p})$ predicate of $\sP$.
\end{definition}
Notice that in general the category of partitioned assemblies of an implicative tripos has no finite limits. This is due to the fact that, in general, $(\mathbf{U\mbox{-}SK_p})$ predicates are not closed under finite meets.
In fact, combining the stability under reindexing of $(\mathbf{U\mbox{-}SK_p})$ predicates with the definition of finite limits in $\Gamma [\sP]$ (see \Cref{gammafl}), it is straightforward to check that:
\begin{lemma}\label{lem_PAsm_lex_iff}
The category $\mathbf{PAsm}_{\sP}$ is a lex subcategory of $\Gamma[\sP]$ if and only if $(\mathbf{U\mbox{-}SK_p})$ predicates are closed under finite meets, that is if and only if $(a_i\times b_i)_{i\in I}$ is $\mathbf{U\mbox{-}SK}$ for every pair of $\mathbf{U\mbox{-}SK}$ families $(a_i)_{i\in I}$ and $(b_i)_{i\in I}$.
\end{lemma}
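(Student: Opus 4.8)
The plan is to unwind both sides of the claimed equivalence through the explicit description of finite limits in $\Gamma[\sP]$ given in \Cref{gammafl}, using the fact that $(\mathbf{U\mbox{-}SK_p})$ predicates are stable under reindexing by definition. First I would recall that $\mathbf{PAsm}_\sP$ is lex as a subcategory of $\Gamma[\sP]$ precisely when it contains the terminal object $(1,\top_1)$ of $\Gamma[\sP]$ and is closed in $\Gamma[\sP]$ under binary products and equalizers. The terminal object is unproblematic: $\top_1$ is a $(\mathbf{U\mbox{-}SK_p})$ predicate because for any $\freccia{J}{f}{1}$ the reindexed predicate $\sP_f(\top_1)=\top_J$, and $\top_J$ is $\mathbf{(SK_p)}$ by the choice rule / \Cref{prop_RC_implies_every_top_is_unif_supercompact}; alternatively one can simply note $1$ is final in $\mathbf{PAsm}_\sP$ as soon as it lies in it. So the content of the lemma concerns closure under the other finite limits.

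Next I would treat binary products. Given $(A,\alpha)$ and $(B,\beta)$ with $\alpha,\beta$ being $(\mathbf{U\mbox{-}SK_p})$ predicates, their product in $\Gamma[\sP]$ is $(A\times B,\sP_{\pi_1}(\alpha)\wedge \sP_{\pi_2}(\beta))$. Using \Cref{pro_super_comp_in_tripos_vs_IA} and \Cref{FEF1} to pass between predicates and families, the predicate $\sP_{\pi_1}(\alpha)\wedge\sP_{\pi_2}(\beta)$ is $(\mathbf{U\mbox{-}SK_p})$ if and only if the family $(\alpha(a)\times\beta(b))_{(a,b)\in A\times B}$ is $\mathbf{U\mbox{-}SK}$ (recalling $\wedge$ and $\times$ agree up to $\equiv_{\Sigma}$ for the purposes of the tripos, or rather that the fibre meet is represented by $\times$). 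Conversely, if we are given arbitrary $\mathbf{U\mbox{-}SK}$ families $(a_i)_{i\in I}$ and $(b_i)_{i\in I}$, they arise as $(\mathbf{U\mbox{-}SK_p})$ predicates $\alpha,\beta\in\sP(I)$, and closure of $\mathbf{PAsm}_\sP$ under products forces $(A\times B,\sP_{\pi_1}(\alpha)\wedge\sP_{\pi_2}(\beta))\in\mathbf{PAsm}_\sP$; reindexing this predicate along the diagonal $\freccia{I}{\Delta_I}{I\times I}$ and invoking stability under reindexing yields that $(a_i\times b_i)_{i\in I}$ is $\mathbf{U\mbox{-}SK}$. This handles both directions for products.

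It then remains to observe that equalizers cause no extra trouble, because an equalizer of $f,g\colon(A,\alpha)\to(B,\beta)$ in $\Gamma[\sP]$ is $(E,\sP_e(\alpha))$ where $\freccia{E}{e}{A}$ is the set-theoretic equalizer, so $\sP_e(\alpha)$ is $(\mathbf{U\mbox{-}SK_p})$ whenever $\alpha$ is, again by pure stability under reindexing. Hence once $\mathbf{PAsm}_\sP$ is closed under binary products (and contains $(1,\top_1)$), it is automatically closed under all finite limits computed in $\Gamma[\sP]$, i.e. it is lex. Putting the two directions together gives the stated biconditional, and the final reformulation in terms of families is exactly the predicate/family dictionary of \Cref{FEF1}. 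The only mild subtlety — what I would flag as the main point to get right rather than a genuine obstacle — is the careful bookkeeping between the fibrewise Heyting meet $\wedge$ in $\sP(I)$ and its representative $\times$ in the implicative algebra, and making sure the passage ``product in $\Gamma[\sP]$ of two $\mathbf{PAsm}$-objects lies in $\mathbf{PAsm}$'' is genuinely equivalent to the family-level statement for \emph{all} pairs of $\mathbf{U\mbox{-}SK}$ families, which is where the reindexing-along-the-diagonal step does the work.
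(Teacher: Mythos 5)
Your overall strategy is the right one and matches what the paper intends (unwind the explicit finite limits of $\Gamma[\sP]$ from \Cref{gammafl}, use stability of $(\mathbf{U\mbox{-}SK_p})$ predicates under reindexing for equalizers, and reindex the product predicate along the diagonal to get the converse for binary meets). The binary-product and equalizer parts of your argument are correct.

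The gap is in your treatment of the terminal object. You assert that $\top_1$ is a $(\mathbf{U\mbox{-}SK_p})$ predicate ``by the choice rule / \Cref{prop_RC_implies_every_top_is_unif_supercompact}'', but the choice rule is \emph{not} a standing hypothesis here: \Cref{prop_RC_implies_every_top_is_unif_supercompact} states an equivalence, not that every implicative algebra satisfies it. For the localic tripos of $\mathcal{O}(\mathbb{R})$, for instance, no element is supercompact, so $\top_1$ is certainly not $(\mathbf{U\mbox{-}SK_p})$; in that example the only $\mathbf{U\mbox{-}SK}$ family is the empty one, binary meets of $\mathbf{U\mbox{-}SK}$ families are trivially $\mathbf{U\mbox{-}SK}$, and yet $\mathbf{PAsm}_{\sP}$ (a one-object category) is not a lex subcategory of $\Gamma[\sP]$ because its terminal object is not sent to $(1,\top_1)$. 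So the terminal object is not ``unproblematic'': it is part of the content of the lemma, and forces one to read ``closed under finite meets'' as including the empty meet. Concretely, in the direction (closed under finite meets $\Rightarrow$ lex) you should obtain $\top_1\in(\mathbf{U\mbox{-}SK_p})$ from the nullary-meet case of the hypothesis; in the direction (lex $\Rightarrow$ closed) you should argue that the terminal object of $\mathbf{PAsm}_{\sP}$, being preserved by the full inclusion, is isomorphic in $\Gamma[\sP]$ to $(1,\top_1)$, and since $(\mathbf{U\mbox{-}SK_p})$ is invariant under isomorphism (it is stable under reindexing and under $\equiv_{\Sigma[I]}$), $\top_1$ is $(\mathbf{U\mbox{-}SK_p})$ and hence so is every $\top_X=\sP_{!_X}(\top_1)$, which is exactly the choice rule. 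With that correction the rest of your argument goes through.
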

\begin{example}
    In the case of realizability triposes the category defined in \Cref{def_partitioned_assemblies} coincides with the ordinary category of partitioned assemblies.
\end{example}
\begin{example}
    Let $\hyperdoctrine{\set}{\sP}$ be an implicative tripos associated with a complete Heyting algebra. Then, we have that an object of $\mathbf{PAsm}_{\sP}$ is a pair $(X,\varphi)$ such that every element $\varphi (x)$ is supercompact in the sense of  \cite{BANASCHEWSKI199145}.
\end{example}
\begin{remark}
    A nice intrinsic characterization of categories which are equivalent to a category of partitioned assemblies for a PCA is presented in \cite[Thm. 3.8]{FreyRT}, where the author proves that  a category is equivalent to partitioned assemblies over a PCA if and only if it is w.l.c.c. and well-pointed local, and has a discrete generic object. This intrinsic description, combined with  the notion of category of partitioned assemblies for an implicative algebra, offers a useful tool to identify implicative algebras whose category of partitioned assemblies happens to be equivalent to the category of partitioned assemblies for a PCA. Again, these considerations can be extended to the case of categories of assemblies for implicative algebras, which we will define in the next section, taking advantage of the intrinsic description of the regular completion of a lex category \cite{SFEC}. 
\end{remark}
\subsection{Assemblies}
The main purpose of this section is to generalize the notion of \emph{category of assemblies} associated with a PCA to implicative algebras and implicative triposes.

Let us recall (see for example \cite{van_Oosten_realizability}) that given a partial combinatory algebra $(R,\cdot)$ the category of \textbf{assemblies} is defined as follows:
\begin{itemize}
    \item an object is a pair $(X,\varphi)$ where $X$ is a set and $\freccia{X}{\varphi}{\powerset^* (R)}$ is a function from $X$ to the non-empty powerset of the  PCA;
    \item a morphism $\freccia{(X,\varphi)}{f}{(Y,\psi)}$ is a function $\freccia{X}{f}{Y}$ such that there exists an element $a\in R$ with $a\cdot \varphi(x)\subseteq \psi(f(x))$ for every $x\in X$.
    \end{itemize}
Notice that, by the result presented in \Cref{sect_supercompact_in_realizability}, we have that the category of assemblies can be described as the full subcategory of $\Gamma [\sP]$ associated with the realizability tripos, whose objects are given by $(X,\varphi)$  where $\varphi$ enjoys the property of being $(\mathbf{U\mbox{-}fSK_p})$, or equivalently (by \Cref{cor_UwfSK_p_iff_UfSK}), of being $(\mathbf{U\mbox{-}wfSK_p})$.

This correspondence between assemblies and   $(\mathbf{U\mbox{-}fSK_p})$ predicates of realizability triposes suggests the following abstraction of the notion of assemblies:
\begin{definition}[Assemblies]\label{def_assemblies}
    Let $\hyperdoctrine{\set}{\sP}$ be an implicative tripos. We define the category of \textbf{assemblies}  $\mathbf{Asm}_{\sP}$ of $\sP$ as the full sub-category of $\Gamma [\sP]$ given by the objects of $\Gamma [\sP]$ whose second component is a $(\mathbf{U\mbox{-}fSK_p})$ predicate of $\sP$.
\end{definition}

Hence we have the following inclusions of categories:
\[\begin{tikzcd}	
\mathbf{PAsm}_{\mathsf{P}} & \mathbf{Asm}_{\mathsf{P}}& \Gamma[\sP]
\arrow[hook,from=1-1, to=1-2]
	\arrow[hook, from=1-2, to=1-3]
\end{tikzcd}\]

As in the case of partitioned assemblies, the category of assemblies of an implicative tripos is not lex or regular, since $(\mathbf{U\mbox{-}fSK_p})$ predicates are not closed under finite meets in general.

\begin{proposition}
    The category $\mathbf{Asm}_{\sP}$ is a regular subcategory of $\Gamma[\sP]$ if and only if:
    \begin{itemize}
        \item $(\mathbf{U\mbox{-}fSK_p})$ predicates are closed under finite meets;
         \item  $(\mathbf{U\mbox{-}fSK_p})$ predicates are stable under existential quantifiers along regular epis, i.e. for every $(\mathbf{U\mbox{-}fSK_p})$ predicate $\varphi$ and $r$ regular epi of $\set$ we have $\exists_r(\varphi )$ is $(\mathbf{U\mbox{-}fSK_p})$.
    \end{itemize}

\end{proposition}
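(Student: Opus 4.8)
The plan is to show that the two listed conditions are exactly what is needed for $\mathbf{Asm}_{\sP}$ to be closed under the operations that build finite limits and regular epi/mono factorizations inside the ambient regular category $\Gamma[\sP]$ (which is regular by \Cref{gammafl}). Recall that a full subcategory of a regular category which contains the terminal object and is closed under binary products, equalizers, and the regular-image factorization is itself regular, with the inclusion preserving this structure. So the strategy is: first check the ``if'' direction by verifying each closure property from the hypotheses; then check the ``only if'' direction by exhibiting, whenever one of the two conditions fails, an explicit limit or image that leaves $\mathbf{Asm}_{\sP}$.

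For the ``if'' direction, I would argue as follows. The terminal object $(1,\top_1)$ lies in $\mathbf{Asm}_{\sP}$ since $\top$ is $(\mathbf{U\mbox{-}fSK_p})$ (it is even $(\mathbf{U\mbox{-}SK_p})$, because every implicative tripos satisfying the relevant closure — in fact one checks directly that $\top_I$ is $\mathbf{fSK}$ and this is stable under reindexing; alternatively it follows from the calculus). For binary products, by \Cref{gammafl} the product of $(A,\alpha)$ and $(B,\beta)$ has underlying predicate $\sP_{\pi_1}(\alpha)\wedge \sP_{\pi_2}(\beta)$; since $(\mathbf{U\mbox{-}fSK_p})$ predicates are stable under reindexing by definition, $\sP_{\pi_1}(\alpha)$ and $\sP_{\pi_2}(\beta)$ are again $(\mathbf{U\mbox{-}fSK_p})$, and the first hypothesis (closure under finite meets) gives that their meet is $(\mathbf{U\mbox{-}fSK_p})$. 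For equalizers, the equalizer of $f,g\colon (A,\alpha)\to (B,\beta)$ is $(E,\sP_e(\alpha))$ with $e$ the set-level equalizer; stability under reindexing along $e$ does the job, using no extra hypothesis. Finally, for the image factorization of an arrow $f\colon(A,\alpha)\to(B,\beta)$: by \Cref{regepigamma} the regular image is $(R,\exists_r(\alpha))$ where $r\colon A\to R$ is the set-level regular epi onto the image; the second hypothesis says precisely that $\exists_r(\alpha)$ is $(\mathbf{U\mbox{-}fSK_p})$, and the mono part $(R,\exists_r(\alpha))\rightarrowtail(B,\beta)$ stays in $\mathbf{Asm}_{\sP}$ trivially. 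One then notes these factorizations are pullback-stable in $\Gamma[\sP]$ and that pullbacks in $\mathbf{Asm}_{\sP}$, being built from products and equalizers, are computed as in $\Gamma[\sP]$, so the inclusion is a regular functor; hence $\mathbf{Asm}_{\sP}$ is regular.

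For the ``only if'' direction, suppose $\mathbf{Asm}_{\sP}$ is a regular subcategory of $\Gamma[\sP]$, meaning in particular that its finite limits and regular-epi/mono factorizations agree with those of $\Gamma[\sP]$. If $(\mathbf{U\mbox{-}fSK_p})$ predicates failed to be closed under finite meets, then by \Cref{lem_PAsm_lex_iff}-style reasoning there would be $(\mathbf{U\mbox{-}fSK_p})$ predicates $\alpha,\beta$ on the same set $A$ whose meet is not $(\mathbf{U\mbox{-}fSK_p})$; but the product of $(A,\alpha)$ and $(A,\beta)$ pulled back along the diagonal — equivalently, the equalizer construction — forces $(A,\alpha\wedge\beta)$ to be an object of $\mathbf{Asm}_{\sP}$, a contradiction. (More precisely: the product $(A\times A,\sP_{\pi_1}\alpha\wedge \sP_{\pi_2}\beta)$ lies in $\mathbf{Asm}_{\sP}$, and reindexing along $\Delta_A$ — which is the equalizer of the two projections composed appropriately, or simply an arrow in $\Gamma[\sP]$ whose domain must then also be an assembly — yields $(A,\alpha\wedge\beta)\in\mathbf{Asm}_{\sP}$.) Similarly, if $\exists_r(\varphi)$ failed to be $(\mathbf{U\mbox{-}fSK_p})$ for some $(\mathbf{U\mbox{-}fSK_p})$ predicate $\varphi$ on $A$ and some regular epi $r$ of $\set$, then the regular image in $\Gamma[\sP]$ of the arrow $r\colon(A,\varphi)\to(\cod r,\top)$ — or of $r\colon (A,\varphi)\to (\cod r, \exists_r\varphi)$ — is $(\cod r,\exists_r(\varphi))$, which would have to be an assembly since $\mathbf{Asm}_{\sP}$ is closed under regular images, again a contradiction.

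The main obstacle I expect is bookkeeping in the ``only if'' direction: one has to be careful that the witnessing limit or image genuinely has the problematic predicate as its underlying second component, and that ``regular subcategory'' is being used with the right strength (closure under the ambient finite limits and image factorization, equivalently that the inclusion is full, exact-structure-preserving). A secondary subtlety is that closure under finite meets should be stated for predicates over a common base set, and one must reduce the general ``stable under binary products in the fibered sense'' to this via reindexing — but this is exactly the translation recorded in \Cref{lem_PAsm_lex_iff} and \Cref{FEF1}, so it is routine. No deep new idea is needed beyond combining \Cref{gammafl}, \Cref{regepigamma}, and the stability-under-reindexing built into the definition of $(\mathbf{U\mbox{-}fSK_p})$.
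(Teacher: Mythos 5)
Your overall strategy coincides with the paper's: reduce lex-ness to closure of $(\mathbf{U\mbox{-}fSK_p})$ predicates under finite meets exactly as in \Cref{lem_PAsm_lex_iff}, and reduce the factorization-system part to stability of $(\mathbf{U\mbox{-}fSK_p})$ predicates under $\exists_r$ along regular epis, using the explicit descriptions in \Cref{gammafl} and \Cref{regepigamma}. The paper's proof is just a terser version of what you wrote, so the architecture is fine.

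There is, however, one concretely false step: your parenthetical claim that the terminal object is handled because ``$\top$ is $(\mathbf{U\mbox{-}fSK_p})$ (it is even $(\mathbf{U\mbox{-}SK_p})$) \dots one checks directly that $\top_I$ is $\mathbf{fSK}$.'' This is not true unconditionally. By \Cref{prop_RC_implies_every_top_is_unif_supercompact}, $\top_I$ being $\mathbf{U\mbox{-}SK}$ for all $I$ is \emph{equivalent} to the choice rule, which many implicative algebras fail; and even the weaker claim that $\top$ is $\mathbf{fSK}$ fails, e.g., for a disconnected locale such as the four-element Boolean algebra, where $\top=a\vee\neg a$ is a join of a $\times$-disjoint pair neither member of which is $\top$. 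The correct reading — and the one the paper implicitly uses, both here and in \Cref{lem_PAsm_lex_iff} — is that ``closed under finite meets'' includes the nullary meet, so that $\top_X$ being $(\mathbf{U\mbox{-}fSK_p})$ is part of the hypothesis rather than a free fact. With that reading your ``if'' direction goes through, and in the ``only if'' direction you must first extract $\top_X\in(\mathbf{U\mbox{-}fSK_p})$ from the assumption that the inclusion preserves the terminal object (and reindex along $!_X$), before you can legitimately use $(\operatorname{cod} r,\top)$ as the codomain in your image argument; as written, that codomain is not yet known to lie in $\mathbf{Asm}_{\sP}$. These are local repairs, not a change of method.
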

\begin{proof}
    As in the case of partitioned assemblies, we have that $\mathbf{Asm}_{\sP}$ is a lex sub-category of  $\Gamma[\sP]$ if and only if  $(\mathbf{U\mbox{-}fSK_p})$ predicates are closed under finite meets. To conclude the proof, it is enough to observe that the factorization system of $\Gamma[\sP] $ induces a factorization system on $\mathbf{Asm}_{\sP}$  if and only if $(\mathbf{U\mbox{-}fSK_p})$ predicates are stable under existential quantifiers along regular epis. But this follows by the explicit description of the factorization system of  $\Gamma[\sP] $, i.e. we have that an arrow $\freccia{(A,\alpha)}{f}{(B,\beta)}$ can be written as
\[\begin{tikzcd}
	{(A,\alpha)} && {(B,\beta)} \\
	& {(R,\exists_r(\alpha))}
	\arrow["r"', two heads, from=1-1, to=2-2]
	\arrow["m"', tail, from=2-2, to=1-3]
	\arrow["f", from=1-1, to=1-3]
\end{tikzcd}\]
with $r$ regular epi and $m$ mono.
\end{proof}
\begin{example}
When $\hyperdoctrine{\set}{\sP}$ is a realizability tripos, the category $\mathbf{Asm}_{\sP}$  coincides with the ordinary category of assemblies for a CA, as described in \cite{van_Oosten_realizability}. 
\end{example}

\begin{example}
When $\hyperdoctrine{\set}{\sP}$ is an implicative tripos for nested realizability the category $\mathbf{Asm}_{\sP}$  coincides with the category of assemblies for nested realizability, as described in \cite[Sec. 1.3]{maschiostreicher15}.
\end{example}
\begin{example}
    When $\hyperdoctrine{\set}{\sP}$ is an implicative tripos for a complete Heyting algebra we have that the category $\mathbf{Asm}_{\sP}$ is given by objects $(I,\varphi)$ where, for every $i\in I$, $\varphi (i)$ is indecomposable, 
    see  \Cref{sect_supercompact_in_realizability}.
\end{example}
\begin{example}
    When $\hyperdoctrine{\set}{\sP}$ is an implicative tripos for a complete Boolean algebra we have that $\mathbf{Asm}_{\sP}\cong \mathbf{PAsm}_{\sP}$, since in every complete Boolean algebra we have that $(\mathbf{U\mbox{-}fSK_p})\equiv (\mathbf{U\mbox{-}SK_p})$, see  \Cref{sect_supercompact_in_heyting_and_boolean_alg}.
\end{example}

\subsection{Regular completion of implicative triposes}
It is a well-known result (see \cite{pitts1981theory,UEC,TECH,van_Oosten_realizability}) that every topos $\mC[\mathsf{P}]$ obtained as the result of the tripos-to-topos construction from a given tripos $\mathsf{P}$ can be presented as the $\mathsf{ex}/\mathsf{reg}$-completion (according to \cite{REC})
$$\mC[\mathsf{P}]\simeq \left(\mathbf{Reg}_{\mathsf{P}}\right)_{\mathsf{ex}/\mathsf{reg}}$$
of a certain regular category, that we denote by $\mathbf{Reg}_{\mathsf{P}}$, constructed from the tripos $\mathsf{P}$. By the universal property of the $(-)_{\mathsf{ex}/\mathsf{reg}}$ completion the canonical embedding $$\freccia{\mathbf{Reg}_{\mathsf{P}}}{\mathbf{y}}{\mC[\mathsf{P}]}$$ is a full and faithful regular functor.

The universal properties of the category $\mathbf{Reg}_{\mathsf{P}}$ (which is called $\mathsf{Ass}_\mathcal{C}(\mathsf{P})$ in \cite{van_Oosten_realizability}) are analysed in detail in \cite{UEC,TECH},  where it is proved that such a category enjoys the property of being the \emph{regular completion} of $\mathsf{P}$.

In the following definition, we recall an explicit description of such a category (in the case of $\set$-based triposes):
%

\begin{definition}\label{def: Ass P}
	Let $\mathsf{P}:\set^{op}\rightarrow \mathsf{Hey}$ be an implicative tripos. We define the category $\mathbf{Reg}_{\mathsf{P}}$ as follows:
\begin{itemize}
	\item the \textbf{objects} of $\mathbf{Reg}_{\mathsf{P}}$ are pairs $(A,\alpha)$, where $A$ is a set and $\alpha$ is an element of $P(A)$;
	\item an \textbf{arrow} of $\mathbf{Reg}_{\mathsf{P}}$ from $(A,\alpha)$ to $(B,\beta)$ is given by an element $\phi$ of $P(A\times B)$ such that:
	\begin{enumerate}
		\item $\phi \leq P_{\pr_1}(\alpha)\wedge P_{\pr_2}(\beta)$;
		\item $\alpha\leq \exists_{\pr_1}(\phi)$;
		\item $P_{\angbr{\pr_1}{\pr_2}}(\phi)\wedge P_{\angbr{\pr_1}{\pr_3}}(\phi)\leq P_{\angbr{\pr_2}{\pr_3}}(\delta_B)$.
	\end{enumerate}
	\end{itemize}
	The compositions of morphisms of $\mathbf{Reg}_{\mathsf{P}}$ is given by the usual \emph{relational composition}: the composition of $\freccia{(A,\alpha)}{\phi}{(B,\beta)}$ and $\freccia{(B,\beta)}{\psi}{(C,\gamma)}$ is given by 
	\[ \exists_{\angbr{\pr_1}{\pr_3}}(P_{\angbr{\pr_1}{\pr_2}}(\phi)\wedge  P_{\angbr{\pr_2}{\pr_3}}(\psi))\]
	where $\pr_i$ for $i=1,2,3$ are projections from $A\times B\times C$.
\end{definition}
\begin{remark}\label{rem:morphisms of assemblies}
    Notice that, from 1.\ and 2.\ in the definition, every arrow $\phi:(A,\alpha)\to(B,\beta) $ of $\mathbf{Reg}_{\mathsf{P}}$ satisfies $\alpha=\exists_{\pr_1}(\phi)$.
\end{remark}
\begin{remark}\label{rem_morphism_of_Gr_induces_morph_ass}
Notice that one can always define a finite-limit preserving functor as follows
$$\mathbf{F}:\Gamma[\mathsf{P}]\rightarrow \mathbf{Reg}_{\mathsf{P}}$$
$$(A,\alpha)\mapsto (A,\alpha)$$
$$f\mapsto\exists_{\langle \id_A,f\rangle}(\alpha)$$
for every arrow $f:(A,\alpha)\rightarrow (B,\beta)$. It is  straightforward to check that this functor is well-defined. Indeed, for every arrow $f:(A,\alpha)\rightarrow (B,\beta)$ in $\Gamma[\mathsf{P}]$,  $\exists_{\langle \id_A,f\rangle}(\alpha)$ satisfies condition $1.$ since $\alpha\leq \mathsf{P}_{f}(\beta)$ and satisfies $2.$ by its very definition; moreover it satisfies also condition $3.$ as one can easily see by using adequately adjunctions and Heyting implications, and exploiting BCC and FR. Identities $\mathsf{Id}_{(A,\alpha)}$ are sent to $\exists_{\Delta_A}(\alpha)$, that is to identities in $\mathbf{Reg}_{\mathsf{P}}$. Finally, the composition is preserved as one can prove using FR and BCC.

The functor $\mathbf{F}$ is not faithful.  Indeed, for every pair of sets $A,B$, we have that $\mathsf{Hom}_{\Gamma[\mathsf{P}]}((A,\bot),(B,\top))=\mathsf{Hom}_{\set}(A,B)$, while $ \mathsf{Hom}_{\mathbf{Reg}_{\mathsf{P}}}((A,\bot),(B,\top))\simeq \{\star\}$. If $A$ is non-empty and $B$ has at least two elements, we have that $$\mathsf{Hom}_{\Gamma[\mathsf{P}]}((A,\bot),(B,\top))\not\simeq\mathsf{Hom}_{\mathbf{Reg}_{\mathsf{P}}}((A,\bot),(B,\top))$$

In general, $\mathbf{F}$ is neither provable to be full. E.g.\ consider the case of the tripos induced by a Boolean algebra with four elements $\{\bot, a, \neg a, \top\}$ and the arrow $\phi:(\{0\}, \top)\rightarrow (\{0,1\},\top)$ defined by 
$$\phi(0,x)=\begin{cases}
a\textrm{ if }x=0\\
\neg a\textrm{ if }x=1\\
\end{cases}$$
This is a well-defined arrow in $\mathbf{Reg}_{\mathsf{P}}$ which however has not the form $\mathbf{F}(f)$ for any $f:\{0\}\rightarrow \{0,1\}$.


Notice that in general the category $\mathbf{Reg
}_{\mathsf{P}}$  is not (equivalent to) a full subcategory of the Grothenieck category $\Gamma[\mathsf{P}]$, since morphisms of $\mathbf{Reg}_{\mathsf{P}}$ may not arise from morphisms of the base category. 
\end{remark}
\begin{lemma}\label{regepiasm}
    A morphism  $\freccia{(A,\alpha)}{\phi}{(B,\beta)}$ in $\mathbf{Reg}_{\mathsf{P}}$ is a regular epi if and only if $\beta=\exists_{\pr_2}(\phi)$.
\end{lemma}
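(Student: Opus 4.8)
The plan is to identify the (regular epi, mono) factorization of an arbitrary morphism of $\mathbf{Reg}_{\mathsf{P}}$ and read the statement off from it. Recall from \cite{UEC,TECH} that $\mathbf{Reg}_{\mathsf{P}}$ is a regular category, so images exist, the factorization of a morphism is unique up to isomorphism, and a morphism is a regular epi exactly when the mono part of its factorization is an isomorphism.

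First I would record the standard description of monomorphisms: a morphism $m\colon (C,\gamma)\to(B,\beta)$ is monic in $\mathbf{Reg}_{\mathsf{P}}$ precisely when, in addition to conditions 1.--3.\ of \Cref{def: Ass P}, it is \emph{injective}, i.e.\ $P_{\angbr{\pr_1}{\pr_2}}(m)\wedge P_{\angbr{\pr_3}{\pr_2}}(m)\leq P_{\angbr{\pr_1}{\pr_3}}(\delta_C)$ with projections taken from $C\times C\times B$; this is the usual comparison of $m$ with its kernel pair, carried out in the internal logic of $\mathsf{P}$ using FR and BCC. Next I would check that for such an injective functional relation the opposite relation $m^{\circ}:=\exists_{\angbr{\pr_2}{\pr_1}}(m)$ is again a morphism $(B,\beta)\to(C,\gamma)$ as soon as $\exists_{\pr_2}(m)=\beta$ (condition 2.\ for $m^{\circ}$ becomes exactly $\beta\leq\exists_{\pr_2}(m)$, and condition 3.\ for $m^{\circ}$ is the injectivity of $m$), and that in that case $m^{\circ}$ is a two-sided inverse of $m$; hence a mono $m$ with $\exists_{\pr_2}(m)=\beta$ is an isomorphism.

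Now, given $\phi\colon(A,\alpha)\to(B,\beta)$, put $\beta_\phi:=\exists_{\pr_2}(\phi)$, so $\beta_\phi\leq\beta$ by condition 1. The restricted identity $\iota_\phi:=P_{\pr_1}(\beta_\phi)\wedge\delta_B$ is a morphism $(B,\beta_\phi)\to(B,\beta)$, monic by the criterion above; and $\phi$, with codomain predicate replaced by $\beta_\phi$, is still a morphism $e_\phi\colon(A,\alpha)\to(B,\beta_\phi)$ (condition 1.\ now holding with equality), with $\iota_\phi\circ e_\phi=\phi$ by a direct relational-composition computation. Using the isomorphism statement of the previous paragraph one sees that every mono into $(B,\beta)$ is isomorphic, over $(B,\beta)$, to $(B,\bar\beta)$ for a unique $\bar\beta\leq\beta$ (apply it to the corestriction of that mono to its image), and that $\phi$ factors through $(B,\bar\beta)$ iff $\beta_\phi\leq\bar\beta$; therefore $\iota_\phi$ represents the image of $\phi$ and $e_\phi$ is a regular epi. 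Thus $\phi=\iota_\phi\circ e_\phi$ is the (regular epi, mono) factorization of $\phi$.

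The lemma then follows. If $\beta=\exists_{\pr_2}(\phi)$, i.e.\ $\beta=\beta_\phi$ in $\mathsf{P}(B)$, then $\iota_\phi=\id_{(B,\beta)}$, so $\phi=e_\phi$ is a regular epi. Conversely, if $\phi$ is a regular epi then the mono part $\iota_\phi$ of its factorization is an isomorphism; unwinding $\iota_\phi\circ\iota_\phi^{-1}=\id_{(B,\beta)}$ in the internal logic of $\mathsf{P}$ — the same manipulation used to verify that $m^{\circ}$ inverts $m$ — forces $\beta\leq\beta_\phi$, whence $\beta=\beta_\phi=\exists_{\pr_2}(\phi)$ in $\mathsf{P}(B)$. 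I expect the only real work to be the characterization of monomorphisms and the verification that $m^{\circ}$ is inverse to $m$: these are routine but slightly fiddly Beck--Chevalley/Frobenius computations, after which everything reduces to formal properties of the factorization system on the regular category $\mathbf{Reg}_{\mathsf{P}}$.
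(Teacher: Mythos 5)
Your argument is correct, but it takes a genuinely different route from the paper. The paper's proof never touches the factorization system of $\mathbf{Reg}_{\mathsf{P}}$ directly: it observes that the embedding $\mathbf{Reg}_{\mathsf{P}}\hookrightarrow\set[\mathsf{P}]\simeq(\mathbf{Reg}_{\mathsf{P}})_{\mathsf{ex}/\mathsf{reg}}$ is a regular functor, hence preserves regular epis, and that it also reflects them (being lex, and regular epis in the topos being coequalizers of their kernel pairs); the statement then follows by citing the known characterization of epis of $\set[\mathsf{P}]$ from Pitts' thesis, since every epi in a topos is regular. Your proof instead stays entirely inside $\mathbf{Reg}_{\mathsf{P}}$: you characterize monos as injective functional relations, show that an injective functional relation whose second projection exhausts the codomain predicate is invertible via its opposite relation, and exhibit the factorization of $\phi$ through $(B,\exists_{\pr_2}(\phi))$ as the (regular epi, mono) factorization, from which the lemma is immediate. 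The trade-off is clear: the paper's argument is shorter but outsources the computation to an external reference and to the tripos-to-topos machinery, whereas yours is self-contained, requires the (routine but nontrivial) verification that monos are injective relations, and as a by-product gives an explicit description of images in $\mathbf{Reg}_{\mathsf{P}}$ — a description the paper implicitly relies on later when discussing regularity of $\mathbf{Asm}_{\sP}$. Both are valid; just make sure, if you write yours out in full, to actually carry out the kernel-pair computation identifying monos, since that is the one step you flag but do not execute.
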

\begin{proof}
We know that the embedding of $\mathbf{Reg}_{\mathsf{P}}$ into $\set[\mathsf{P}]\simeq (\mathbf{Reg}_{\mathsf{P}})_{\mathsf{ex}/\mathsf{reg}}$ preserves regular epis, since it is a regular functor. Moreover, since the embedding preserves finite limits and every regular epi in $\set[\mathsf{P}]$ is a coequalizer of its kernel pair, we can conclude that the embedding also reflects regular epis. Thus an arrow in $\mathbf{Reg}_{\mathsf{P}}$ is a regular epi if and only if it is a regular epi in $\set[\mathsf{P}]$.

We know that in $\set[\mathsf{P}]$ every epi is regular since it is a topos. Thus using the characterization of epis of $\set[\mathsf{P}]$ in \cite{pitts1981theory}, we can conclude.

\end{proof}

\begin{remark} \label{prop_F_is_regular}
Notice that the functor $\mathbf{F}:\Gamma[\sP]\rightarrow \mathbf{Reg}_{\mathsf{P}}$ preserves regular epis. In particular, 
the functor $\mathbf{F}$ is regular. This follows immediately from Proposition \ref{regepigamma}, Lemma \ref{regepiasm} and Proposition \ref{gammafl}.

\end{remark}
\begin{remark}
Notice that the category $\mathbf{Reg}_{\mathsf{P}}$ of a tripos $\mathsf{P}$ can also be described by means of the constant object functor as the full subcategory of $\set[\mathsf{P}]$ of subobjects of constant objects $\Delta(A)$ for some object $A$ of $\set$ (for the definition of the constant object functor $\Delta:\set\rightarrow \set[\sP]$ see e.g.\ \cite{van_Oosten_realizability}).
\end{remark}
\begin{remark}\label{lessimpliesequal}
Notice that in $\mathbf{Reg}_{\mathsf{P}}$, as observed in \cite{pitts1981theory}, for parallel arrows $\phi,\psi:(A,\alpha)\to (B,\beta)$, we have that $\phi\leq \psi$ if and only if $\phi=\psi$. 
Here we sketch a proof that if $\phi\leq \psi$ then $\psi\leq\phi$. By \Cref{rem:morphisms of assemblies} we have that $\exists_{\pr_1}(\phi)=\alpha=\exists_{\pr_1}(\psi)$ and then, in particular, we have that $\exists_{\pr_1}(\psi)\leq \exists_{\pr_1}(\phi)$, and then that $\psi\leq \sP_{\pr_1}\exists_{\pr_1}(\phi)$. By BCC we have that $\psi\leq \exists_{\angbr{\pi_1}{\pi_2}} \sP_{\angbr{\pi_1}{\pi_3}}(\phi)$, where $\pi_i$ are the projections in the following pullback
\[\begin{tikzcd}
	{A\times B\times B} & {A\times B} \\
	{A\times B} & A
	\arrow["{\pi_1}", from=2-1, to=2-2]
	\arrow["{\pi_1}", from=1-2, to=2-2]
	\arrow["{\angbr{\pi_1}{\pi_3}}"', from=1-1, to=2-1]
	\arrow["{\angbr{\pi_1}{\pi_2}}", from=1-1, to=1-2]
\end{tikzcd}\]
In particular, we have that $\psi=\exists_{\angbr{\pi_1}{\pi_2}} \sP_{\angbr{\pi_1}{\pi_3}}(\phi) \wedge \psi$ and, by FR, we have that
 $$\psi=\exists_{\angbr{\pi_1}{\pi_2}} (\sP_{\angbr{\pi_1}{\pi_3}}(\phi) \wedge \sP_{\angbr{\pi_1}{\pi_2}}(\psi)).$$
 But since $\phi\leq \psi$, we have that
 \[\exists_{\angbr{\pi_1}{\pi_2}} (\sP_{\angbr{\pi_1}{\pi_3}}(\phi) \wedge \sP_{\angbr{\pi_1}{\pi_2}}(\psi))=\exists_{\angbr{\pi_1}{\pi_2}} (\sP_{\angbr{\pi_1}{\pi_3}}(\phi) \wedge \sP_{\angbr{\pi_1}{\pi_3}}(\psi) \wedge \sP_{\angbr{\pi_1}{\pi_2}}(\psi)).\]
 Since $\psi$ is functional, i.e. $\sP_{\angbr{\pi_1}{\pi_3}}(\psi) \wedge \sP_{\angbr{\pi_1}{\pi_2}}(\psi)\leq \sP_{\angbr{\pr_2}{\pr_3}}(\delta_B)$, we have that 
 $$\psi\leq \exists_{\angbr{\pi_1}{\pi_2}} (\sP_{\angbr{\pi_1}{\pi_3}}(\phi) \wedge \sP_{\angbr{\pr_2}{\pr_3}}(\delta_B)).$$
Employing the fact that $\delta_{B}=\exists_{\Delta_B}(\top_B)$, BCC and FR,  it is straightforward to check that
 \[\phi=\exists_{\angbr{\pi_1}{\pi_2}} (\sP_{\angbr{\pi_1}{\pi_3}}(\phi) \wedge \sP_{\angbr{\pr_2}{\pr_3}}(\delta_B)).\]
 Therefore we can conclude that $\psi\leq \phi$, and hence that $\psi = \phi$ (since $\phi\leq \psi$ by hypothesis).
\end{remark}


\subsubsection{The subcategory of trackable objects}
Let $\mathsf{P}:\set\rightarrow \mathsf{Hey}$ be a fixed implicative tripos for the rest of this section.

\begin{definition}[trackable morphism]
    Let $\freccia{(A,\alpha)}{\phi}{(B,\beta)}$ be a morphism of $\mathbf{Reg}_{\mathsf{P}}$. We say that $\phi$ is \textbf{trackable} if there exists a morphism $\freccia{A}{f_\phi}{B}$ of the base category such that $\alpha\leq\mathsf{P}_{\angbr{\id_A}{f_{\phi}}}(\phi)$.
\end{definition}
\begin{definition}[trackable object]
    An object $(A,\alpha)$ of $\mathbf{Reg}_{\mathsf{P}}$ is said to be a \textbf{trackable object} if every morphism  $\freccia{(A,\alpha)}{\phi}{(B,\beta)}$  of $\mathbf{Reg}_{\mathsf{P}}$ is trackable. We denote by  $\mathbf{Track}_{\mathsf{P}}$  the full subcategory of  $\mathbf{Reg}_{\mathsf{P}}$  whose objects are trackable assemblies.
\end{definition}
\begin{remark}\label{rem_track_morph_induces_morph_gr}
Notice that for $\freccia{(A,\alpha)}{\phi}{(B,\beta)}$ in $\mathbf{Reg}_{\mathsf{P}}$, if $\alpha\leq\mathsf{P}_{\angbr{\id_A}{f_{\phi}}}(\phi)$, then  $\alpha=\mathsf{P}_{\angbr{\id_A}{f_{\phi}}}(\phi)$, since the opposite inequality follows from $\phi\leq \mathsf{P}_{\pi_1}(\alpha)$.
    Notice moreover that when a morphism $\freccia{(A,\alpha)}{\phi}{(B,\beta)}$ of $\mathbf{Reg}_{\mathsf{P}}$ is trackable, then we have that the arrow $\freccia{A}{f_{\phi}}{B}$ induces a well-defined arrow $\freccia{(A,\alpha)}{f_{\phi}}{(B,\beta)}$ in $\Gamma[\mathsf{P}]$. In fact, by definition of arrows in $\mathbf{Reg}_{\mathsf{P}}$ we have that $\phi\leq \mathsf{P}_{\pr_1}(\alpha)\wedge \mathsf{P}_{\pr_2}(\beta)$, and then, by applying $\mathsf{P}_{\angbr{\id_A}{f_{\phi}}}$, we have that
    \[\alpha\leq\mathsf{P}_{\angbr{\id_A}{f_{\phi}}}(\phi)\leq \alpha \wedge \mathsf{P}_{f_{\phi}}(\beta)\]
    and then we can conclude that $\alpha\leq \mathsf{P}_{f_{\phi}}(\beta)$.  We can also notice that $\mathbf{F}(f_\phi)=\exists_{\langle \delta_A,f_\phi\rangle}(\alpha)\leq \phi$, by adjunction. Since both $\mathbf{F}(f_\phi)$ and $\phi$ are arrows in $\mathbf{Reg}_{\mathsf{P}}$ from $(A,\alpha)$ to $(B,\beta)$, we conclude by Remark \ref{lessimpliesequal} that they are in fact equal.  
\end{remark}
Now we can employ the notions introduced in \Cref{def_supercomp_in_triposes} to easily characterize the category of trackable objects of an implicative tripos:
\begin{proposition}
    Let $\hyperdoctrine{\set}{\sP}$ be an implicative tripos. Then an object $(A,\alpha)$ of $\mathbf{Reg}_{\sP}$ is trackable if and only if $\alpha$ is a $\mathbf{(wfSK_p)}$ predicate of $\sP$.
\end{proposition}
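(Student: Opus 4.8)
The proof is essentially a matching of definitions. Recall from the definition of $\mathbf{Reg}_{\sP}$ that a morphism $\freccia{(A,\alpha)}{\phi}{(B,\beta)}$ is a predicate $\phi\in\sP(A\times B)$ which, by its defining conditions~2 and~3, is a functional predicate with $\alpha\leq\exists_{\pr_1}(\phi)$; and that $\phi$ is trackable precisely when there is $\freccia{A}{f}{B}$ with $\alpha\leq\sP_{\angbr{\id_A}{f}}(\phi)$, which is exactly the conclusion of the $\mathbf{(wfSK_p)}$ property (\Cref{def_supercomp_in_triposes}) applied to $\phi$ with $I:=A$ and $J:=B$. Hence the ``if'' direction is immediate: if $\alpha$ is $\mathbf{(wfSK_p)}$, then every morphism $\phi$ out of $(A,\alpha)$ in $\mathbf{Reg}_{\sP}$ is a functional predicate with $\alpha\leq\exists_{\pr_1}(\phi)$, so it is trackable, and therefore $(A,\alpha)$ is a trackable object.

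For the ``only if'' direction, suppose $(A,\alpha)$ is a trackable object and let $\phi\in\sP(A\times J)$ be a functional predicate with $\alpha\leq\exists_{\pr_1}(\phi)$; we must produce $\freccia{A}{f}{J}$ with $\alpha\leq\sP_{\angbr{\id_A}{f}}(\phi)$. The obstacle — and the only real content of the proof — is that $\phi$ need not satisfy $\phi\leq\sP_{\pr_1}(\alpha)$, so it is not yet a morphism of $\mathbf{Reg}_{\sP}$. The plan is to correct $\phi$ to $\phi':=\phi\wedge\sP_{\pr_1}(\alpha)$ and set $\beta:=\exists_{\pr_2}(\phi')$, and then verify that $\freccia{(A,\alpha)}{\phi'}{(J,\beta)}$ is a morphism of $\mathbf{Reg}_{\sP}$. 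Indeed, $\phi'\leq\sP_{\pr_1}(\alpha)$ by construction and $\phi'\leq\sP_{\pr_2}\exists_{\pr_2}(\phi')=\sP_{\pr_2}(\beta)$ by the unit of $\exists_{\pr_2}\dashv\sP_{\pr_2}$, which gives the first defining condition; $\phi'\leq\phi$ together with the functionality of $\phi$ gives $\sP_{\angbr{\pr_1}{\pr_2}}(\phi')\wedge\sP_{\angbr{\pr_1}{\pr_3}}(\phi')\leq\sP_{\angbr{\pr_1}{\pr_2}}(\phi)\wedge\sP_{\angbr{\pr_1}{\pr_3}}(\phi)\leq\sP_{\angbr{\pr_2}{\pr_3}}(\delta_J)$, the third condition; and Frobenius reciprocity gives $\exists_{\pr_1}(\phi')=\exists_{\pr_1}(\sP_{\pr_1}(\alpha)\wedge\phi)=\alpha\wedge\exists_{\pr_1}(\phi)=\alpha$, using $\alpha\leq\exists_{\pr_1}(\phi)$, which is the second condition.

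Since $(A,\alpha)$ is a trackable object, the morphism $\phi'$ is trackable: there is $\freccia{A}{f}{J}$ with $\alpha\leq\sP_{\angbr{\id_A}{f}}(\phi')$, and since $\phi'\leq\phi$ this yields $\alpha\leq\sP_{\angbr{\id_A}{f}}(\phi)$, as required; hence $\alpha$ is $\mathbf{(wfSK_p)}$. I expect no difficulty with degenerate index sets: the construction is uniform in $J$, and if $A\neq\emptyset$ then trackability of $(A,\alpha)$ already forces $\alpha\not\equiv_\Sigma\bot$ (when $\alpha\equiv_\Sigma\bot$ there is a morphism $(A,\alpha)\to(\emptyset,\top_\emptyset)$ admitting no tracker), so the case $J=\emptyset$ — which would force $\alpha\equiv_\Sigma\bot$ via $\alpha\leq\exists_{\pr_1}(\phi)$ — cannot arise. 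This establishes the equivalence, the single nontrivial step being the correction of $\phi$ to $\phi\wedge\sP_{\pr_1}(\alpha)$ and the Frobenius computation showing it preserves condition~2.
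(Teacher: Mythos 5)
Your proof is correct; the paper states this proposition without proof (as an ``easy'' consequence of the definitions), and your argument supplies exactly the verification that is being left implicit. The two points that actually need an argument --- restricting a functional predicate $\phi$ with $\alpha\leq\exists_{\pr_1}(\phi)$ to $\phi\wedge\sP_{\pr_1}(\alpha)$ and using Frobenius reciprocity to check condition~2 of \Cref{def: Ass P}, and ruling out the degenerate case $J=\emptyset$ via the untrackable morphism $(A,\alpha)\to(\emptyset,\top_\emptyset)$ when $\alpha\equiv_\Sigma\bot$ and $A\neq\emptyset$ --- are both handled correctly.
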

\subsubsection{The subcategory of strongly trackable objects}

\begin{definition}[strongly trackable morphism]
    Let $\freccia{(A,\alpha)}{\phi}{(B,\beta)}$ be a morphism of $\mathbf{Reg}_{\mathsf{P}}$. We say that $\phi$ is \textbf{strongly trackable} if there exists a unique morphism $\freccia{A}{f_{\phi}}{B}$ of the base category such that $\alpha\leq\mathsf{P}_{\angbr{\id_A}{f_{\phi}}}(\phi)$.
\end{definition}

\begin{definition}
    An object $(A,\alpha)$ of $\mathbf{Reg}_{\mathsf{P}}$ is said to be a \textbf{strongly trackable object} if every morphism  $\freccia{(A,\alpha)}{\phi}{(B,\beta)}$  of $\mathbf{Reg}_{\mathsf{P}}$ is strongly trackable. We denote by  $\mathbf{STrack}_{\mathsf{P}}$  the full subcategory of  $\mathbf{Reg}_{\mathsf{P}}$  strongly trackable objects.
\end{definition}


Employing the notions introduced in \Cref{def_supercomp_in_triposes} we can  easily characterize the category of strongly trackable objects of an implicative tripos:
\begin{proposition}\label{Prop_strong_tr_iff_fSK}
    Let $\hyperdoctrine{\set}{\sP}$ be an implicative tripos. Then an object $(A,\alpha)$ of $\mathbf{Reg}_{\sP}$ is strongly trackable if and only if $\alpha$ is a $\mathbf{(fSK_p)}$ predicate of $\sP$.
\end{proposition}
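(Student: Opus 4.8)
The plan is to set up a dictionary between morphisms of $\mathbf{Reg}_{\mathsf{P}}$ out of $(A,\alpha)$ and the functional predicates that appear in the definition of $\mathbf{(fSK_p)}$. Unwinding \Cref{def: Ass P}, a morphism $\freccia{(A,\alpha)}{\phi}{(B,\beta)}$ of $\mathbf{Reg}_{\mathsf{P}}$ is exactly an element $\phi\in\sP(A\times B)$ that (i) is bounded above by $\sP_{\pr_1}(\alpha)\wedge\sP_{\pr_2}(\beta)$, (ii) satisfies $\alpha\leq\exists_{\pr_1}(\phi)$, and (iii) is a functional predicate on $A\times B$. Conditions (ii) and (iii) are precisely the hypotheses of $\mathbf{(fSK_p)}$ with $I=A$ and $J=B$, and its conclusion — the existence of a unique $\freccia{A}{f}{B}$ with $\alpha\leq\sP_{\angbr{\id_A}{f}}(\phi)$ — is precisely the statement that $\phi$ is strongly trackable. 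Hence for the implication ``$\alpha$ is $\mathbf{(fSK_p)}$ $\Rightarrow$ $(A,\alpha)$ is strongly trackable'' there is nothing left to do: every morphism $\freccia{(A,\alpha)}{\phi}{(B,\beta)}$ of $\mathbf{Reg}_{\mathsf{P}}$ automatically meets the hypotheses of $\mathbf{(fSK_p)}$, which supplies the required unique tracking map.

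For the converse, ``$(A,\alpha)$ strongly trackable $\Rightarrow$ $\alpha$ is $\mathbf{(fSK_p)}$'', the subtlety is that a functional predicate $\phi\in\sP(A\times J)$ with $\alpha\leq\exists_{\pr_1}(\phi)$ need not satisfy condition (i) and so is not literally a morphism of $\mathbf{Reg}_{\mathsf{P}}$. I would remedy this by passing to $\phi':=\phi\wedge\sP_{\pr_1}(\alpha)$ and $\beta:=\exists_{\pr_2}(\phi')$. Then $\phi'\leq\sP_{\pr_1}(\alpha)$ by construction and $\phi'\leq\sP_{\pr_2}(\beta)$ by the unit of $\exists_{\pr_2}\dashv\sP_{\pr_2}$, which gives (i); $\exists_{\pr_1}(\phi')=\alpha\wedge\exists_{\pr_1}(\phi)=\alpha$ by Frobenius reciprocity together with the hypothesis $\alpha\leq\exists_{\pr_1}(\phi)$, which gives (ii); and $\phi'\leq\phi$ with monotonicity of reindexing transfers functionality from $\phi$ to $\phi'$, giving (iii). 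So $\freccia{(A,\alpha)}{\phi'}{(J,\beta)}$ is a morphism of $\mathbf{Reg}_{\mathsf{P}}$, and strong trackability of $(A,\alpha)$ yields a unique $\freccia{A}{f}{J}$ with $\alpha\leq\sP_{\angbr{\id_A}{f}}(\phi')$; since $\phi'\leq\phi$ this same $f$ satisfies $\alpha\leq\sP_{\angbr{\id_A}{f}}(\phi)$.

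The last point is to transfer the uniqueness of $f$ from $\phi'$ back to $\phi$. For this I would observe that, for any $\freccia{A}{g}{J}$, the inequalities $\alpha\leq\sP_{\angbr{\id_A}{g}}(\phi)$ and $\alpha\leq\sP_{\angbr{\id_A}{g}}(\phi')$ are in fact \emph{equivalent}: ``$\Leftarrow$'' is immediate from $\phi'\leq\phi$, and for ``$\Rightarrow$'' one uses $\pr_1\circ\angbr{\id_A}{g}=\id_A$ to get $\sP_{\angbr{\id_A}{g}}\sP_{\pr_1}(\alpha)=\alpha$, hence $\sP_{\angbr{\id_A}{g}}(\phi')=\sP_{\angbr{\id_A}{g}}(\phi)\wedge\alpha\geq\alpha$. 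Consequently any $g$ tracking $\phi$ also tracks $\phi'$, and so must equal the unique tracker $f$ of $\phi'$. This establishes the uniqueness clause of $\mathbf{(fSK_p)}$ and completes the argument.

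I do not expect a genuine obstacle here: the whole proof is the dictionary above plus standard manipulations with reindexing, Frobenius reciprocity, and adjunction units, all available in any tripos. The only step requiring a little care is the replacement of $\phi$ by $\phi'$ and the verification that tracking $\phi$ and tracking $\phi'$ are interchangeable; this is exactly the move that upgrades the analogous characterization of $\mathbf{Track}_{\mathsf{P}}$ in terms of $\mathbf{(wfSK_p)}$ to the ``strong'', unique-witness version stated here.
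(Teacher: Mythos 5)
Your proof is correct and is precisely the argument the paper leaves implicit (the proposition is stated there without proof as an immediate consequence of the definitions): the forward direction is the literal dictionary between $\mathbf{Reg}_{\sP}$-morphisms out of $(A,\alpha)$ and functional predicates with $\alpha\leq\exists_{\pr_1}(\phi)$, and the converse requires exactly the $\phi':=\phi\wedge\sP_{\pr_1}(\alpha)$ adjustment you make, together with the observation that tracking $\phi$ and tracking $\phi'$ are equivalent (via $\sP_{\angbr{\id_A}{g}}(\phi')=\sP_{\angbr{\id_A}{g}}(\phi)\wedge\alpha$), which transfers uniqueness. No gaps.
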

Hence we have the following diagram for every implicative tripos:
\[\begin{tikzcd}	
\mathbf{PAsm}_{\mathsf{P}} & \mathbf{Asm}_{\mathsf{P}}& \Gamma[\sP]\\
& \mathbf{STrack}_{\mathsf{P}} & \mathbf{Track}_{\mathsf{P}} & \mathbf{Reg}_{\mathsf{P}}
\arrow[hook,from=1-1, to=2-2]
	\arrow[hook, from=2-2, to=2-3]
 	\arrow[hook,from=2-3, to=2-4]
  	\arrow[hook,from=1-2, to=2-2]
 \arrow[hook, from=1-1, to=1-2]
 \arrow[ from=1-3, to=2-4, "\mathbf{F}"]
 \arrow[hook, from=2-2, to=1-3]
\end{tikzcd}\]

\subsection{Category of (regular) projective strongly trackable objects}
In the previous sections we have seen that the notions of $\mathbf{(fSK_p)}$ and $\mathbf{(wfSK_p)}$ predicates have a clear interpretation in terms of  trackable objects of the regular completion of an implicative tripos. The main purpose of this section is to show that the notion of $\mathbf{(SK_p)}$ predicates corresponds exactly to those strongly trackable objects of $\mathbf{Reg}_{\sP}$ that are \emph{regular projectives}.
\begin{definition}
    We denote by $\mathbf{Pr\mbox{-}STrack}_{\sP}$ the full subcategory of $\mathbf{Reg}_{\sP}$ whose objects are strongly trackable and regular projective.
\end{definition}

\begin{proposition}
    Every object $(A,\alpha)$ where $\alpha$ is $\mathbf{(SK_p)}$ is regular projective in $\mathbf{Reg}_{\mathsf{P}}$.
\end{proposition}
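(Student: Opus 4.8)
The plan is to show that a $(\mathbf{SK_p})$ predicate $\alpha$ over $A$ makes $(A,\alpha)$ a regular projective in $\mathbf{Reg}_{\mathsf{P}}$, i.e.\ that every regular epi $\freccia{(C,\gamma)}{\psi}{(A,\alpha)}$ admits a section in $\mathbf{Reg}_{\mathsf{P}}$, equivalently that every map $\freccia{(A,\alpha)}{\chi}{(B,\beta)}$ lifts along any regular epi $\freccia{(C,\gamma)}{\psi}{(B,\beta)}$. I would work with the first formulation, since it is the cleanest. So fix a regular epi $\freccia{(C,\gamma)}{\psi}{(A,\alpha)}$; by \Cref{regepiasm} this means $\alpha=\exists_{\pr_2}(\psi)$, where now I relabel so that $\psi\in\sP(C\times A)$, $\psi\leq \sP_{\pr_1}(\gamma)\wedge\sP_{\pr_2}(\alpha)$, and $\gamma\leq\exists_{\pr_1}(\psi)$. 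Viewing $\psi$ instead as a predicate over $A\times C$ (swapping the factors), the condition $\alpha=\exists_{\pr_C}(\psi)$ — where $\pr_C\colon A\times C\to A$ is... wait, I must be careful: $\alpha=\exists_{\pr_A}(\psi)$ where $\pr_A\colon A\times C\to A$. This is precisely $\alpha\leq\exists_{\pr_A}(\psi)$ (the other inequality is automatic from $\psi\leq\sP_{\pr_A}(\alpha)$).

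The core step invokes the $(\mathbf{SK_p})$ property of $\alpha$ in the form of \Cref{def_supercomp_in_triposes}: from $\alpha\leq\exists_f(\psi)$ with $f=\pr_A\colon A\times C\to A$, we obtain a function $\freccia{A}{g}{A\times C}$ with $\pr_A\circ g=\id_A$ and $\alpha\leq\sP_g(\psi)$. Writing $g=\angbr{\id_A}{h}$ for a function $\freccia{A}{h}{C}$, this says $\alpha\leq\sP_{\angbr{\id_A}{h}}(\psi)$. I then claim that $\mathbf{F}(h)=\exists_{\angbr{\id_A}{h}}(\alpha)\in\sP(A\times C)$, viewed as a morphism $(A,\alpha)\to(C,\gamma)$ of $\mathbf{Reg}_{\mathsf{P}}$, is a section of $\psi$. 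That $h$ indeed defines an arrow of $\Gamma[\sP]$ from $(A,\alpha)$ to $(C,\gamma)$, hence that $\mathbf{F}(h)$ is a legitimate arrow of $\mathbf{Reg}_{\mathsf{P}}$, follows from $\alpha\leq\sP_{\angbr{\id_A}{h}}(\psi)\leq\sP_{\angbr{\id_A}{h}}(\sP_{\pr_C}(\gamma))=\sP_h(\gamma)$, using $\psi\leq\sP_{\pr_C}(\gamma)$ (here $\pr_C\colon A\times C\to C$). Then I need $\psi\circ\mathbf{F}(h)=\id_{(A,\alpha)}=\exists_{\Delta_A}(\alpha)$ in $\mathbf{Reg}_{\mathsf{P}}$.

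For the composition I would compute directly: the relational composite of $\mathbf{F}(h)\colon(A,\alpha)\to(C,\gamma)$ and $\psi\colon(C,\gamma)\to(A,\alpha)$ is $\exists_{\angbr{\pr_1}{\pr_3}}\bigl(\sP_{\angbr{\pr_1}{\pr_2}}(\mathbf{F}(h))\wedge\sP_{\angbr{\pr_2}{\pr_3}}(\psi)\bigr)$ with projections from $A\times C\times A$. Using $\mathbf{F}(h)=\exists_{\angbr{\id_A}{h}}(\alpha)$ and BCC/FR, one simplifies $\sP_{\angbr{\pr_1}{\pr_2}}(\mathbf{F}(h))$ to $\sP_{\pr_1}(\alpha)\wedge\sP_{\angbr{\pr_1}{\pr_2}}(\delta\text{-type term along }h)$; intersecting with $\sP_{\angbr{\pr_2}{\pr_3}}(\psi)$ and pushing forward along $\angbr{\pr_1}{\pr_3}$ forces the second coordinate to equal $h(\pr_1)$ and yields $\exists_{\angbr{\id_A}{h}}^{\,\text{then}}$-style collapse to $\exists_{\Delta_A}(\alpha)=\delta_A\wedge\dots$; since $\alpha\leq\sP_{\angbr{\id_A}{h}}(\psi)$ exactly guarantees that this pushforward is $\geq\exists_{\Delta_A}(\alpha)$, and $\leq$ holds because the composite lands in $\sP_{\pr_1}(\alpha)\wedge\sP_{\pr_2}(\alpha)$ and is functional, by \Cref{lessimpliesequal} the two agree. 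Alternatively — and this is the slicker route I would actually present — I would avoid the explicit relational calculation entirely: note that $\mathbf{F}$ is a functor (\Cref{rem_morphism_of_Gr_induces_morph_ass}), that $h$ and $\psi$ compose in an appropriate sense, and that $\mathbf{F}$ being faithful on the relevant hom is not needed; instead, observe that $\psi\circ\mathbf{F}(h)$ and $\id_{(A,\alpha)}$ are parallel arrows $(A,\alpha)\to(A,\alpha)$ and it suffices to show $\id_{(A,\alpha)}\leq\psi\circ\mathbf{F}(h)$, which by \Cref{lessimpliesequal} gives equality. The inequality $\exists_{\Delta_A}(\alpha)\leq\psi\circ\mathbf{F}(h)$ unwinds, by adjunction, to exactly $\alpha\leq\sP_{\angbr{\id_A}{h}}(\psi)$ — which is what $(\mathbf{SK_p})$ gave us. Hence $\psi\circ\mathbf{F}(h)=\id$, $\psi$ is split epi, and $(A,\alpha)$ is regular projective.

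The main obstacle is the bookkeeping in identifying the relational composite $\psi\circ\mathbf{F}(h)$ with $\id_{(A,\alpha)}$: one must juggle several copies of $A$ and $C$, repeatedly apply BCC and FR, and correctly recognize the equality predicate $\delta_A$ appearing from $\exists_{\Delta_A}(\top)$. Reducing this to the single inequality $\alpha\leq\sP_{\angbr{\id_A}{h}}(\psi)$ via \Cref{lessimpliesequal} is what makes the argument short; the subtlety is checking that the adjunction-chase genuinely reduces to that inequality and not a weaker one, which is where I would spend the most care.
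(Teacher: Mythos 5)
Your proof is correct, but it takes a genuinely different route from the paper's. The paper works directly with an arbitrary cospan $\phi_1\colon(A,\alpha)\to(B,\beta)$, $\phi_2\colon(C,\gamma)\twoheadrightarrow(B,\beta)$: it first invokes the fact that $\phi_1$ is trackable (which rests on $\mathbf{(SK_p)}\Rightarrow\mathbf{(fSK_p)}$, i.e.\ on \Cref{sk->fsk} via \Cref{Prop_strong_tr_iff_fSK}) to replace $\phi_1$ by a function $f_{\phi_1}$ with $\alpha\leq\sP_{f_{\phi_1}}(\beta)$, then applies $\mathbf{(SK_p)}$ to $\alpha\leq\sP_{f_{\phi_1}}\exists_{\pr_2}(\phi_2)=\exists_{\pr_2}\sP_{\id_C\times f_{\phi_1}}(\phi_2)$ to extract $h\colon A\to C$, and finally verifies $\phi_2\circ\exists_{\angbr{\id_A}{h}}(\alpha)=\phi_1$ by the relational calculus plus \Cref{lessimpliesequal}. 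You instead reduce to showing that every regular epi $\psi\colon(C,\gamma)\twoheadrightarrow(A,\alpha)$ splits, apply $\mathbf{(SK_p)}$ once to the surjectivity condition $\alpha\leq\exists_{\pr_A}(\psi)$ (after transposing $\psi$ to a predicate on $A\times C$), and check $\exists_{\Delta_A}(\alpha)\leq\psi\circ\mathbf F(h)$ by adjunction and \Cref{lessimpliesequal}. Your verification of the composite is genuinely shorter (the needed inequality follows from $\alpha\leq\sP_{\angbr{h}{\id_A}}(\psi)$ together with the unit inequality $\alpha\leq\sP_{\angbr{\id_A}{h}}\exists_{\angbr{\id_A}{h}}(\alpha)$, both of which you have), and it avoids any appeal to trackability of $\phi_1$. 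What it costs is the initial reduction: ``regular projective iff every regular epi onto it splits'' uses pullback-stability of regular epis, i.e.\ the regularity of $\mathbf{Reg}_{\mathsf{P}}$ (which the paper does assert, via \Cref{regepiasm} and the embedding into the topos, so this is legitimate — but you should cite it explicitly rather than just saying ``equivalently''). Your abandoned middle computation is too vague to stand on its own, but since you discard it in favour of the adjunction argument, the proof as presented goes through.
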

\begin{proof}
    Let us consider the following diagram
\[\begin{tikzcd}
	&& {(C,\gamma)} \\
	\\
	{(A,\alpha)} && {(B,\beta)}
	\arrow["{\phi_1}"', from=3-1, to=3-3]
	\arrow["{\phi_2}", from=1-3, to=3-3]
 	\arrow["{\phi_3}", dashed, from=3-1, to=1-3]
\end{tikzcd}\]
with $\phi_2$ regular epi in $\mathbf{Reg}_{\mathsf{P}}$, i.e. $\beta\leq \exists_{\pr_2}(\phi_2)$, and $\alpha$ $\mathbf{(SK_p)}$. We have to show that there exists a morphism $\phi_3$ such that the previous diagram commutes. By \Cref{{Prop_strong_tr_iff_fSK}}, we know that $\phi_1$ is trackable, i.e. there exists an arrow $\freccia{A}{f_{\phi_1}}{B}$ such that $\alpha=\sP_{\angbr{\id_A}{f_{\phi_1 }}}(\phi_1)$. By \Cref{rem_track_morph_induces_morph_gr}, we have that $\phi_1$ trackable implies that $\alpha\leq \sP_{f_{\phi_1}}(\beta)$. 
Thus, we have that
\begin{equation}\label{eq:crucial}
    \alpha\leq \sP_{f_{\phi_1}}(\beta)\leq  \sP_{f_{\phi_1}}\exists_{\pr_2}(\phi_2).
\end{equation}
By Beck-Chevalley condition, we have that \eqref{eq:crucial} implies that
\begin{equation}\label{eq:crucial 1}
    \alpha\leq  \exists_{\pr_2} (\sP_{\id_C\times f_{\phi_1} }(\phi_2)).
\end{equation}
Since $\alpha$ is $\mathbf{(SK_p)}$ we have that there exists an arrow $\freccia{A}{h}{C}$ such that

\begin{equation}\label{eq_crucial_3}
\alpha\leq \sP_{\angbr{h}{\id_A}}( \sP_{\id_C\times f_{\phi_1}}(\phi_2))=\sP_{\angbr{h}{f_{\phi_1}}}(\phi_2)
\end{equation}
Now we claim that $\phi_3:=\exists_{\angbr{\id_A}{h}}(\alpha)$ is a morphism of $\mathbf{Reg}_{\sP}$. Notice that 
it is enough to prove that $\alpha\leq \sP_h(\gamma)$ because if $\freccia{(A,\alpha)}{h}{(C,\gamma)}$ is a morphism in the Grothendieck category $\Gamma [\sP]$ then $\exists_{\angbr{\id_A}{h}}(\alpha)$ is a morphism of $\mathbf{Reg}_{\sP}$ from $(A,\alpha)$ to $(C,\gamma)$. 

Recall that since $\phi_2$ is a morphism of $\mathbf{Reg}_{\sP}$  we have, in particular, that $\phi_2\leq \sP_{\pr_1}(\gamma)$, and then we can combine this with \eqref{eq_crucial_3} to conclude that
\[\alpha\leq \sP_{\angbr{h}{f_{\phi_1}}}(\phi_2)\leq \sP_{\angbr{h}{f_{\phi_1}}}(\sP_{\pr_1}(\gamma))=\sP_{h}(\gamma).\]
Finally, let us check that the starting diagram commutes with $\phi_3$.

Recall that the composition $\phi_2\circ \phi_3$ in $\mathbf{Reg}_{\mathsf{P}}$ is given by
\begin{equation}\label{eq composition phi 2 and phi 3}
    \exists_{\angbr{\pr_1}{\pr_3}}(\sP_{\angbr{\pr_1}{\pr_2}}(\phi_3)\wedge\sP_{\angbr{\pr_2}{\pr_3}}(\phi_2))
\end{equation}
By definition of $\phi_3=\exists_{\angbr{\id_A}{h}}(\alpha)$, so we have that 
 \[\sP_{\angbr{\pr_1}{\pr_2}}(\phi_3)\wedge\sP_{\angbr{\pr_2}{\pr_3}}(\phi_2)= \sP_{\angbr{\pr_1}{\pr_2}}\exists_{\angbr{\id_A}{h}}(\alpha)\wedge\sP_{\angbr{\pr_2}{\pr_3}}(\phi_2) \]
 and by BCC, this is equal to
 \[ \exists_{\angbr{\pr_1}{h\circ\pr_1,\pr_2}}\sP_{\pr_1}(\alpha)\wedge\sP_{\angbr{\pr_2}{\pr_3}}(\phi_2) \]
 Now we can apply FR, obtaining 
  \[ \exists_{\angbr{\pr_1}{h\circ\pr_1,\pr_2}}(\sP_{\pr_1}(\alpha)\wedge\sP_{\angbr{h\circ\pr_1}{\pr_2}}(\phi_2) )\]
Therefore we have that  \eqref{eq composition phi 2 and phi 3} is equal to
  \[\sP_{\pr_1}(\alpha)\wedge\sP_{\angbr{h\circ\pr_1}{\pr_2}}(\phi_2)\]
By \Cref{lessimpliesequal},  to show that $\phi_3\circ\phi_2=\phi_1$ it is enough to show that $\phi_1\leq \phi_3\circ \phi_2$, i.e. that
\begin{equation}\label{eq phi1 leq phi3phi2}
     \phi_1\leq \sP_{\pr_1}(\alpha)\wedge\sP_{\angbr{h\circ\pr_1}{\pr_2}}(\phi_2).
\end{equation}
First, $\phi_1\leq \sP_{\pr_1}(\alpha)$ since $\phi_1$ is an arrow with domain $(A,\alpha)$ in $\mathbf{Reg}_{\mathsf{P}}$.

Now we show that $ \phi_1\leq \sP_{\angbr{h\circ\pr_1}{\pr_2}}(\phi_2)$: using again the fact that $\phi_1=\exists_{\angbr{\id_A}{f_{\phi_1}}}(\alpha)$ we have that 
\[\phi_1\leq \sP_{\angbr{h\circ\pr_1}{\pr_2}}(\phi_2)\iff \exists_{\angbr{h\circ\pr_1}{\pr_2}}(\phi_1)\leq \phi_2\iff  \exists_{\angbr{h}{f_{\phi_1}}}(\alpha) \leq \phi_2\]
and then we can conclude that
\[\phi_1\leq \sP_{\angbr{h\circ\pr_1}{\pr_2}}(\phi_2)\iff \alpha\leq \sP_{\angbr{h}{f_{\phi_1}}} (\phi_2).\]
Since we have that $ \varphi\leq \sP_{\angbr{h}{f_{\phi_1}}} (\phi_2)$ holds by \eqref{eq_crucial_3}, we can conclude that $\phi_1\leq \sP_{\angbr{h\circ\pr_A}{\pr_B}}(\phi_2)$. This concludes the proof that \eqref{eq phi1 leq phi3phi2} holds and then, by \Cref{lessimpliesequal}, that $\phi_1=\phi_3\circ\phi_2$.

\end{proof}

\begin{proposition}
  Let $(A,\alpha)$ be a strongly trackable object  of  $\mathbf{Reg}_{\sP}$. If $(A,\alpha)$ is regular projective, then $\alpha$ is $\mathbf{(SK_p)}$.
\end{proposition}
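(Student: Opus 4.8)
\section*{Proof proposal}

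The plan is to verify directly the defining condition of $\mathbf{(SK_p)}$: given a function $\freccia{J}{f}{A}$ and a predicate $\psi\in\sP(J)$ with $\alpha\leq\exists_f(\psi)$, we must produce $\freccia{A}{g}{J}$ with $f\circ g=\id_A$ and $\alpha\leq\sP_g(\psi)$. First I would reduce to the case in which $\psi$ already sits below $\sP_f(\alpha)$: replacing $\psi$ by $\psi_0:=\psi\wedge\sP_f(\alpha)$, Frobenius reciprocity gives $\exists_f(\psi_0)=\exists_f(\psi)\wedge\alpha=\alpha$, while $\psi_0\leq\sP_f(\alpha)$ makes $f$ a morphism $\freccia{(J,\psi_0)}{}{(A,\alpha)}$ in $\Gamma[\sP]$; since $\psi_0\leq\psi$, any $g$ witnessing the conclusion for $\psi_0$ also works for $\psi$, so it suffices to treat $\psi_0$.

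Next I would manufacture a regular epi onto $(A,\alpha)$ from this data. Put $\phi:=\mathbf{F}(f)=\exists_{\angbr{\id_J}{f}}(\psi_0)$, a morphism $\freccia{(J,\psi_0)}{\phi}{(A,\alpha)}$ of $\mathbf{Reg}_{\sP}$ which is well defined by \Cref{rem_morphism_of_Gr_induces_morph_ass} (this is precisely where the functionality clause in the definition of a morphism of $\mathbf{Reg}_{\sP}$ is supplied for free). Using $\pr_2\circ\angbr{\id_J}{f}=f$ we compute $\exists_{\pr_2}(\phi)=\exists_f(\psi_0)=\alpha$, so $\phi$ is a regular epi by \Cref{regepiasm}. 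Since $(A,\alpha)$ is regular projective, the identity $\id_{(A,\alpha)}$ lifts along $\phi$: there is a morphism $\freccia{(A,\alpha)}{s}{(J,\psi_0)}$ of $\mathbf{Reg}_{\sP}$ with $\phi\circ s=\id_{(A,\alpha)}$.

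Now I would use strong trackability to turn $s$ into an actual function. Since $(A,\alpha)$ is strongly trackable, $s$ is strongly trackable, so there is a \emph{unique} $\freccia{A}{g}{J}$ with $\alpha\leq\sP_{\angbr{\id_A}{g}}(s)$; by \Cref{rem_track_morph_induces_morph_gr} this forces $s=\mathbf{F}(g)=\exists_{\angbr{\id_A}{g}}(\alpha)$, makes $g$ a morphism $\freccia{(A,\alpha)}{}{(J,\psi_0)}$ in $\Gamma[\sP]$, and yields $\alpha\leq\sP_g(\psi_0)\leq\sP_g(\psi)$, which is one of the two conditions required. For the other, $f\circ g=\id_A$, I would argue that $\mathbf{F}$ is faithful on morphisms out of a strongly trackable object: if $\mathbf{F}(u)=\mathbf{F}(v)$ as morphisms $\freccia{(A,\alpha)}{}{(B,\beta)}$, then by the unit inequality $\alpha\leq\sP_{\angbr{\id_A}{u}}\exists_{\angbr{\id_A}{u}}(\alpha)=\sP_{\angbr{\id_A}{u}}(\mathbf{F}(u))$ (and likewise for $v$), both $u$ and $v$ are trackers of the \emph{same} morphism out of the strongly trackable object $(A,\alpha)$, so $u=v$ by the uniqueness clause. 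Since $\mathbf{F}$ preserves composition and identities, $\mathbf{F}(f\circ g)=\phi\circ s=\id_{(A,\alpha)}=\mathbf{F}(\id_A)$, and applying the faithfulness lemma with $u=f\circ g$, $v=\id_A$ gives $f\circ g=\id_A$. This completes the verification that $\alpha$ is $\mathbf{(SK_p)}$.

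I expect the only genuinely delicate point to be the faithfulness lemma of the last paragraph, i.e.\ upgrading the equality $\mathbf{F}(f\circ g)=\mathbf{F}(\id_A)$ in the non-faithful functor $\mathbf{F}$ to the honest identity $f\circ g=\id_A$; the rest is a routine assembly of \Cref{regepiasm}, \Cref{rem_morphism_of_Gr_induces_morph_ass}, \Cref{rem_track_morph_induces_morph_gr} and the standard Frobenius/Beck--Chevalley manipulations. A secondary, purely bookkeeping, concern is making sure that after the reduction to $\psi_0$ the morphism $\phi$ really satisfies all three defining clauses of a morphism of $\mathbf{Reg}_{\sP}$, which is exactly the content of \Cref{rem_morphism_of_Gr_induces_morph_ass} applied to $\mathbf{F}$.
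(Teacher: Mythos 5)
Your proof is correct and follows essentially the same route as the paper's: build a regular epi of the form $\mathbf{F}(f)$, lift the identity (resp.\ $\exists_{\Delta_A}(\alpha)$) through it by regular projectivity, extract the section $g$ via strong trackability, and use the uniqueness clause of strong trackability to upgrade $\mathbf{F}(f\circ g)=\mathbf{F}(\id_A)$ to $f\circ g=\id_A$. The only difference is cosmetic: you normalize $\psi$ to $\psi_0=\psi\wedge\sP_f(\alpha)$ so the epi lands on $(A,\alpha)$ itself, whereas the paper keeps $\psi$ and takes $(A,\exists_f(\psi))$ as the codomain of the lifting square.
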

\begin{proof}
    Let us suppose that $\alpha\leq \exists_f (\beta)$ where $\freccia{B}{f}{A}$ is an arrow of the base category and $\beta \in \sP (B)$. Then, since $(A,\alpha)$ is regular projective, there exists an arrow $\phi$ such that the diagram
\[\begin{tikzcd}
	&& {(B,\beta)} \\
	\\
	{(A,\alpha)} && {(A,\exists_f(\beta))}
	\arrow["{\exists_{\Delta_A}(\alpha)}"', from=3-1, to=3-3]
	\arrow["{\exists_{\angbr{\id_B}{f}}(\beta)}", from=1-3, to=3-3]
	\arrow["\phi", dashed, from=3-1, to=1-3]
\end{tikzcd}\]
commutes in $\mathbf{Reg}_{\mathbf{P}}$ (indeed notice that $\exists_{\angbr{\id_B}{f}}(\beta)$ is a regular epi because $\exists_{\pr_2}\exists_{\angbr{\id_B}{f}}(\beta)=\exists_f(\beta)$). Since  $(A,\alpha)$ is a strongly trackable object there exists a unique $f_{\phi}:A\to B$ such that $ \phi=\exists_{\angbr{\id_A}{f_{\phi}}}(\alpha)$. Notice that since $\freccia{(A,\alpha)}{\phi}{(B,\beta)}$ is a morphism of assemblies we have that 
\[\phi=\exists_{\angbr{\id_A}{f_{\phi}}}(\alpha) \leq \sP_{\pr_1}(\alpha)\wedge \sP_{\pr_2}(\beta)\]
hence
\[\exists_{\angbr{\id_A}{f_{\phi}}}(\alpha) \leq \sP_{\pr_2}(\beta)\]
and then we can conclude that 
\[ \alpha \leq \sP_{f_{\phi}}(\beta).\]
Finally notice that the composition $f\circ f_{\phi}:A\to A$ has to be equal to the identity $\id_A$ on $A$ because ${\exists_{\angbr{\id_B}{f}}(\beta)}\circ \exists_{\angbr{\id_A}{f_{\phi}}}(\alpha)  =\exists_{\Delta_A}(\alpha)$ since the previous diagram commutes. In fact, first notice that  
\[{\exists_{\angbr{\id_B}{f}}(\beta)}\circ \exists_{\angbr{\id_A}{f_{\phi}}}(\alpha) =\exists_{\angbr{\id_A}{f\circ f_{\phi}}}(\alpha)\]
because by the definition of the functor $\mathbf{F}:\Gamma[\mathsf{P}]\rightarrow \mathbf{Reg}_{\mathsf{P}}$ we have that:
\[ \exists_{\angbr{\id_B}{f}}(\beta)\circ \exists_{\angbr{\id_A}{f_{\phi}}}(\alpha)=\mathbf{F}(f)\circ \mathbf{F}(f_{\phi})=\mathbf{F}(f\circ f_{\phi})=\exists_{\angbr{\id_A}{f\circ f_{\phi}}}(\alpha)\]
Now, since we have proved that $\exists_{\angbr{\id_A}{f\circ f_{\phi}}}(\alpha)={\exists_{\angbr{\id_B}{f}}(\beta)}\circ \exists_{\angbr{\id_A}{f_{\phi}}}(\alpha)  =\exists_{\Delta_A}(\alpha)$
we can use the uniqueness in the definition of strongly trackable morphism to conclude that $f\circ f_{\phi}=\id_A$. This allows us to conclude that $\alpha$ is $\mathbf{(SK_p)}$.
\end{proof}
As a corollary of the previous two propositions, we have that:
\begin{corollary}\label{cor_strong_track_regular_proj_iff}
A strongly trackable object $(A,\alpha)$ is regular projective in $\mathbf{Reg}_{\mathsf{P}}$ if and only if $\alpha$ is $\mathbf{(SK_p)}$. 

\end{corollary}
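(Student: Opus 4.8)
The plan is to obtain the equivalence as a direct concatenation of the two propositions proved just above, with one small bookkeeping observation ensuring that the restriction to strongly trackable objects is harmless.

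First I would prove the ``if'' direction. Suppose $\alpha$ is $\mathbf{(SK_p)}$. By Proposition~\ref{sk->fsk}, transported to the level of predicates via Proposition~\ref{pro_super_comp_in_tripos_vs_IA}, being $\mathbf{(SK_p)}$ implies being $\mathbf{(fSK_p)}$; hence by Proposition~\ref{Prop_strong_tr_iff_fSK} the object $(A,\alpha)$ is automatically strongly trackable. The first of the two propositions immediately preceding the corollary then gives that $(A,\alpha)$ is regular projective in $\mathbf{Reg}_{\sP}$. So every object whose predicate is $\mathbf{(SK_p)}$ lies in $\mathbf{STrack}_{\sP}$ and is regular projective there.

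Next I would prove the ``only if'' direction, which is immediate: if $(A,\alpha)$ is strongly trackable and regular projective in $\mathbf{Reg}_{\sP}$, then the second of the two preceding propositions yields at once that $\alpha$ is $\mathbf{(SK_p)}$. Combining the two directions gives the stated biconditional.

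The only point requiring care --- hardly an obstacle --- is the remark in the first step that $\mathbf{(SK_p)}$ entails strong trackability: the proposition producing regular projectivity from $\mathbf{(SK_p)}$ is stated for arbitrary objects of $\mathbf{Reg}_{\sP}$, so one must check that no such object is excluded when passing to the full subcategory $\mathbf{STrack}_{\sP}$, which is precisely the implication $\mathbf{(SK_p)}\Rightarrow\mathbf{(fSK_p)}$ together with Proposition~\ref{Prop_strong_tr_iff_fSK}. Everything else is a one-line appeal to results already in hand.
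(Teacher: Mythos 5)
Your proof is correct and matches the paper's, which also obtains the corollary as an immediate concatenation of the two preceding propositions (the ``if'' direction from the proposition that $\mathbf{(SK_p)}$ implies regular projectivity, the ``only if'' direction from the proposition on strongly trackable regular projectives). Your extra observation that $\mathbf{(SK_p)}\Rightarrow\mathbf{(fSK_p)}\Rightarrow$ strongly trackable is sound but not needed for the statement as phrased, since strong trackability is already a hypothesis of the biconditional.
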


Summarizing, we have the following diagram:

\[\begin{tikzcd}
\mathbf{PAsm}_{\sP} & \mathbf{Asm}_{\sP} &\Gamma[\sP]\\
	\mathbf{Pr\mbox{-}STrack}_{\sP} & \mathbf{STrack}_{\sP} & \mathbf{Track}_{\sP} & \mathbf{Reg}_{\sP}
	\arrow[hook, from=2-1, to=2-2]
	\arrow[hook, from=2-2, to=2-3]
	\arrow[hook, from=2-3, to=2-4]
 \arrow[hook, from=1-1, to=1-2]
 \arrow[hook, from=1-1, to=2-1]
  \arrow[hook, from=1-2, to=2-2]
      \arrow[hook, from=2-2, to=1-3]
   \arrow[from=1-3, to=2-4, "\mathbf{F}"]
\end{tikzcd}\]

\subsection{A characterization of the categories of assemblies and regular completion}
It is well-known that in realizability the category of assemblies happens to be equivalent to the $\mathsf{reg}/\mathsf{lex}$-completion of its full subcategory of partition assemblies~\cite{SFEC}. In this section, we investigate for which implicative algebras we can extend this equivalence.
\begin{remark}
    Notice that, when we consider a tripos associated with a uniformly supercoherent implicative algebra, we have that, by \Cref{rem_SK_iff_U_SK_in_unif_suprc_alg}, $$\mathbf{PAsm}_{\sP}\equiv \mathbf{Pr\mbox{-}STrack}_{\sP}$$
\end{remark}
\begin{theorem}\label{thm_main_1}
    Let $\hyperdoctrine{\set}{\sP}$ be an implicative tripos, for a given implicative algebra $\mathbb{A}$. Then $\mathbb{A}$ is uniformly supercoherent if and only if $\mathbf{PAsm}_{\sP}$ is a lex (full) sub-category of $\mathbf{Reg}_{\sP}$ and it provides a projective cover of $\mathbf{Reg}_{\sP}$.
\end{theorem}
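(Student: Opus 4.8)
The plan is to prove the two implications separately, using the earlier characterizations that translate the tripos-theoretic conditions into statements about families. Recall from \Cref{rem_SK_p_are_FEF_in_realizability} that $(\mathbf{U\mbox{-}SK_p})$ predicates are exactly the full-existential-free elements of $\sP$, and that by \Cref{pro_super_comp_in_tripos_vs_IA} and \Cref{FEF1} these correspond to the family-level notions introduced in \Cref{sec_topological_notions_IA}. The key external input is \Cref{theorem caract. gen. existential completion}: an implicative tripos is (isomorphic to) a full existential completion if and only if it satisfies F-RC, the full-existential-free elements are closed under finite meets, and it has enough full-existential-free elements. By \Cref{def_unif_supercoherent} and the Remark following it, $\mathbb{A}$ is uniformly supercoherent exactly when $\sP$ is such a completion: the choice rule corresponds to F-RC (via \Cref{prop_RC_implies_every_top_is_unif_supercompact}, noting $\top_A$ is $(\mathbf{U\mbox{-}SK_p})$ iff F-RC holds since full-existential-freeness of $\top$ means F-RC), closure of $(\mathbf{U\mbox{-}SK})$ under $\times$ is closure of full-existential-free elements under finite meets, and the third bullet is precisely ``enough full-existential-free elements''.

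First I would prove the forward direction. Assume $\mathbb{A}$ is uniformly supercoherent. Then $(\mathbf{U\mbox{-}SK_p})$ predicates are closed under finite meets, so by \Cref{lem_PAsm_lex_iff} $\mathbf{PAsm}_{\sP}$ is a lex full subcategory of $\Gamma[\sP]$; since the functor $\mathbf{F}\colon\Gamma[\sP]\to\mathbf{Reg}_{\sP}$ is finite-limit preserving (\Cref{rem_morphism_of_Gr_induces_morph_ass}) and, restricted to $\mathbf{PAsm}_{\sP}=\mathbf{Pr\mbox{-}STrack}_{\sP}$ (using \Cref{rem_SK_iff_U_SK_in_unif_suprc_alg} and \Cref{Prop_strong_tr_iff_fSK}, \Cref{cor_strong_track_regular_proj_iff}), it is full and faithful, I would check that $\mathbf{PAsm}_{\sP}$ sits inside $\mathbf{Reg}_{\sP}$ as a lex subcategory. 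For the projective cover: each object of $\mathbf{PAsm}_{\sP}$ is regular projective in $\mathbf{Reg}_{\sP}$ by \Cref{cor_strong_track_regular_proj_iff}; and every object $(A,\alpha)$ of $\mathbf{Reg}_{\sP}$ admits a regular epi onto it from some $(\mathbf{U\mbox{-}SK_p})$-object — this is exactly ``enough full-existential-free elements'': write $\alpha=\exists_g(\beta)$ with $\beta$ full-existential-free ($(\mathbf{U\mbox{-}SK_p})$) and $\freccia{B}{g}{A}$, then $\mathbf{F}(g)\colon(B,\beta)\to(A,\alpha)$ is a regular epi by \Cref{regepiasm} and \Cref{prop_F_is_regular}. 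Hence $\mathbf{PAsm}_{\sP}$ is a projective cover.

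For the converse, assume $\mathbf{PAsm}_{\sP}$ is a lex full subcategory of $\mathbf{Reg}_{\sP}$ providing a projective cover. Lexness gives, via \Cref{lem_PAsm_lex_iff}, that $(\mathbf{U\mbox{-}SK_p})$ predicates are closed under finite meets, which is the second bullet of \Cref{def_unif_supercoherent}. The projective-cover hypothesis gives that every $(A,\alpha)$ in $\mathbf{Reg}_{\sP}$ receives a regular epi from a $\mathbf{PAsm}_{\sP}$-object, say $\phi\colon(B,\beta)\to(A,\alpha)$ with $\beta$ $(\mathbf{U\mbox{-}SK_p})$; but $(B,\beta)$ is strongly trackable and regular projective (again \Cref{cor_strong_track_regular_proj_iff}), so $\phi$ is trackable by a function $g$, and by \Cref{rem_track_morph_induces_morph_gr} and \Cref{regepiasm} one gets $\alpha=\exists_g(\beta)$ — this yields ``enough full-existential-free elements'', i.e.\ the third bullet. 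Finally, for the choice rule: the terminal object $(1,\top_1)$ of $\mathbf{Reg}_{\sP}$ must be covered by a regular projective $\mathbf{PAsm}_{\sP}$-object, and since $(1,\top_1)$ is itself in $\mathbf{PAsm}_{\sP}$ when F-RC holds; more directly, I would argue that a projective cover closed under finite limits forces the terminal object and its reindexings $\top_I$ to be $(\mathbf{U\mbox{-}SK_p})$ (each $\top_I$ arises as a finite limit — actually a product — of copies of objects whose covers split), giving F-RC via \Cref{prop_RC_implies_every_top_is_unif_supercompact}. Assembling the three bullets gives uniform supercoherence.

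The main obstacle I anticipate is the last point of the converse — extracting the \emph{choice rule} (F-RC) from the bare ``projective cover'' hypothesis. Having enough projectives and closure under finite meets does not obviously force $\top_I$ to be full-existential-free; one must use that the cover is \emph{lex} and that in a projective cover the covering projectives can be chosen compatibly with finite limits, so that the cover of $\Delta(I)=(I,\top_I)$ is a split epi whose source is $(\mathbf{U\mbox{-}SK_p})$, forcing $\top_I$ itself to be $(\mathbf{U\mbox{-}SK_p})$ (using that $(\mathbf{U\mbox{-}SK_p})$ predicates are closed under the relevant reindexings and that a retract in $\mathbf{Reg}_{\sP}$ of a $(\mathbf{SK_p})$-object is $(\mathbf{SK_p})$, by \Cref{lessimpliesequal}). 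I would phrase this carefully, possibly invoking the abstract fact that a lex category with a projective cover by objects of a subcategory closed under finite products has its terminal object regular projective, which then combines with \Cref{prop_RC_implies_every_top_is_unif_supercompact} to conclude.
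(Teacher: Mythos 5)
Your proposal follows the paper's own proof essentially step for step: the forward direction via \Cref{lem_PAsm_lex_iff}, \Cref{cor_strong_track_regular_proj_iff} and \Cref{regepiasm}, and the converse by extracting closure under finite meets from lexness and the ``enough $(\mathbf{U\mbox{-}SK_p})$ elements'' condition from the projective cover together with strong trackability of partitioned assemblies. The one place where your sketch goes astray is the step you yourself flag as the main obstacle, namely the choice rule in the converse. The argument you propose there does not work as stated: $(I,\top_I)=\Delta(I)$ is not a finite limit of copies of the terminal object (it is a coproduct in $\set$ of such copies), there is no reason the cover of $(I,\top_I)$ should split before projectivity of $(I,\top_I)$ is known, and the claim that a retract in $\mathbf{Reg}_{\sP}$ of an $(\mathbf{SK_p})$-object is again $(\mathbf{SK_p})$ is nowhere established and is not obvious, since strong trackability need not pass to retracts (\Cref{lessimpliesequal} says nothing about this). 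The correct and much shorter route is the one you mention only in passing: lexness of the inclusion forces the terminal object $(1,\top_1)$ of $\mathbf{Reg}_{\sP}$ to be an object of $\mathbf{PAsm}_{\sP}$, i.e.\ $\top_1$ is $(\mathbf{U\mbox{-}SK_p})$; since $(\mathbf{U\mbox{-}SK_p})$ is by definition stable under reindexing, $\top_I$ is $(\mathbf{SK_p})$ for every $I$, and \Cref{prop_RC_implies_every_top_is_unif_supercompact} then yields the choice rule with no retract lemma needed. (The paper itself folds this into ``closure under finite meets'' in \Cref{lem_PAsm_lex_iff}, reading the nullary meet as the top element, so your instinct that the point deserves explicit attention is sound even though your resolution is not.)
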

\begin{proof}
    Let us suppose that $\mathbb{A}$ is uniformly supercoherent (see \Cref{def_unif_supercoherent}). Then, since $\mathbf{U\mbox{-}SK}$ are closed under finite infs, we have that  $\mathbf{PAsm}_{\sP}$  is a lex subcategory of $\mathbf{Reg}_{\sP}$; this is a consequence of 
    \Cref{lem_PAsm_lex_iff} and its proof, of the proof of \Cref{gammafl} and of the fact that partitioned assemblies are strongly trackable.
    Now we show that  $\mathbf{PAsm}_{\sP}$ is a projective cover. We already know that the objects of  $\mathbf{PAsm}_{\sP}$ are projective (by \Cref{cor_strong_track_regular_proj_iff}), so we only have to show that every object $(B,\psi)$ of $\mathbf{Reg}_{\sP}$ of is covered by a regular projective of $\mathbf{PAsm}_{\sP}$. To show this we use the fact that in a  uniformly supercoherent algebra, every element can be written as $\exists_f(\varphi)$, with $\varphi$ $\mathbf{(U\mbox{-}SK_p)}$. In detail, given an object $(B,\psi)$ of $\mathbf{Reg}_{\sP}$, there exists an element  $\varphi\in \sP(A)$ and a morphsim $\freccia{A}{f}{B}$ of $\set$ such that $\beta=\exists_f(\varphi)$. From this, we have that $\varphi \leq \sP_f(\beta)$, i.e. that $\freccia{(A,\varphi)}{f}{(B,\beta)}$ is a morphism in $\Gamma [\sP]$. Therefore, we can define a morphism of $\mathbf{Reg}_{\sP}$, $\freccia{(A,\varphi)}{\phi}{(B,\beta)}$ by $\phi:=\exists_{\angbr{\id_A}{f}}(\varphi)$. By \Cref{regepiasm}, we have that $\phi$ is a regular epi in  $\mathbf{Reg}_{\sP}$ since $\exists_{\pr_B}(\phi)=\exists_f(\varphi)=\beta$. This concludes the proof that $\mathbf{PAsm}_{\sP}$ is a lex (full) sub-category of $\mathbf{Reg}_{\sP}$ and it provides a projective cover of $\mathbf{Reg}_{\sP}$.

    Now we show the other direction. 
The fact that $\mathbf{(U\mbox{-}SK)}$ (or equivalently $\mathbf{(U\mbox{-}SK)_p}$ predicates) elements are closed under finite meets follows by \Cref{lem_PAsm_lex_iff}. Finally, to show that every element of the implicative algebra can be written as $\exists_f(\varphi)$ with $\varphi$ $\mathbf{(U\mbox{-}SK)_p}$, we use the fact that $\mathbf{PAsm}_{\sP}$ provides a protective cover of $\mathbf{Asm}_{\sP}$. In particular, we have that for every element $\beta$ of $\sP(B)$, the object $(B,\beta)$ of $\mathbf{Reg}_{\sP}$ is covered by a regular epi $(\freccia{(A,\varphi)}{\phi}{(B,\beta)}$ where $(A,\varphi)$ is an object of $\mathbf{PAsm}_{\sP}$. Since every object of $\mathbf{PAsm}_{\sP}$ is strongly trackable, we have that $\phi=\exists_{\angbr{\id_A}{f_{\phi}}}(\varphi)$, and since $\phi$ is a regular epi, i.e. $\exists_{\pr_B}(\phi)=\beta$, we can conclude that $\exists_{f_{\phi}}(\varphi)=\beta$. This concludes the proof that $\mathbb{A}$ is uniformly supercompact.
\end{proof}

Given the intrinsic characterization  of the regular completion of a lex category presented in \cite{SFEC}, we have the following corollary: 
\begin{corollary}\label{cor_main_thm}
   Let $\hyperdoctrine{\set}{\sP}$ be an implicative tripos, for a given implicative algebra $\mathbb{A}$. Then  $\mathbb{A}$ is uniformly supercoherent if and only if $\mathbf{PAsm}_{\sP}$ is a lex subcategory of $\mathbf{Reg}_{\sP}$  and $\reglex{\mathbf{PAsm}_{\sP}}\cong \mathbf{Reg}_{\sP}.$
\end{corollary}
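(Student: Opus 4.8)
The plan is to obtain the statement as a direct corollary of Theorem~\ref{thm_main_1} together with the intrinsic description of regular completions due to Carboni and Vitale \cite{SFEC}. Recall from \cite{SFEC} that, for a regular category $\mathcal{R}$, being (equivalent to) the regular completion $\mathcal{C}_{\mathsf{reg}/\mathsf{lex}}$ of a lex category $\mathcal{C}$ amounts to exhibiting $\mathcal{C}$ as a full subcategory of $\mathcal{R}$ which is closed under finite limits in $\mathcal{R}$, whose objects are regular projective in $\mathcal{R}$, and such that every object of $\mathcal{R}$ is a regular quotient of an object of $\mathcal{C}$; moreover, the comparison is then the canonical regular functor $\mathcal{C}_{\mathsf{reg}/\mathsf{lex}}\to\mathcal{R}$ induced, via the universal property of $(-)_{\mathsf{reg}/\mathsf{lex}}$, by the finite-limit-preserving inclusion $\mathcal{C}\hookrightarrow\mathcal{R}$. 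The first of these three conditions is precisely the assertion that $\mathcal{C}$ is a \emph{lex subcategory} of $\mathcal{R}$, while the latter two together say exactly that $\mathcal{C}$ provides a \emph{projective cover} of $\mathcal{R}$ in the sense used in Theorem~\ref{thm_main_1}.

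Granting this dictionary, the forward implication is immediate. If $\mathbb{A}$ is uniformly supercoherent, Theorem~\ref{thm_main_1} says that $\mathbf{PAsm}_{\sP}$ is a lex full subcategory of $\mathbf{Reg}_{\sP}$ providing a projective cover; since $\mathbf{Reg}_{\sP}$ is regular, and the inclusion $\mathbf{PAsm}_{\sP}\hookrightarrow\mathbf{Reg}_{\sP}$ is the restriction of the finite-limit-preserving functor $\mathbf{F}$ (cf.\ \Cref{rem_morphism_of_Gr_induces_morph_ass}) to a lex subcategory consisting of strongly trackable objects, hence is finite-limit preserving and full and faithful, applying the characterization of \cite{SFEC} with $\mathcal{R}=\mathbf{Reg}_{\sP}$ and $\mathcal{C}=\mathbf{PAsm}_{\sP}$ yields $\reglex{\mathbf{PAsm}_{\sP}}\cong\mathbf{Reg}_{\sP}$ via the canonical comparison functor. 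Conversely, suppose $\mathbf{PAsm}_{\sP}$ is a lex subcategory of $\mathbf{Reg}_{\sP}$ and that the canonical comparison $\reglex{\mathbf{PAsm}_{\sP}}\to\mathbf{Reg}_{\sP}$ is an equivalence. Then $\mathbf{Reg}_{\sP}$ is a regular completion, so by \cite{SFEC} the canonical copy of $\mathbf{PAsm}_{\sP}$ inside $\reglex{\mathbf{PAsm}_{\sP}}$ — hence, through the equivalence, the full subcategory $\mathbf{PAsm}_{\sP}\subseteq\mathbf{Reg}_{\sP}$ itself — consists of regular projective objects and has the property that every object of $\mathbf{Reg}_{\sP}$ is a regular quotient of one of its objects. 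Thus $\mathbf{PAsm}_{\sP}$ is a lex subcategory providing a projective cover, and Theorem~\ref{thm_main_1} gives that $\mathbb{A}$ is uniformly supercoherent.

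There is no substantial new content here: the whole argument is the combination of Theorem~\ref{thm_main_1} with \cite{SFEC}. The one point requiring a little care — and the step I would single out as the main (purely bookkeeping) obstacle to a fully formal write-up — is the converse direction: one must read the hypothesis ``$\reglex{\mathbf{PAsm}_{\sP}}\cong\mathbf{Reg}_{\sP}$'' as referring to the canonical comparison functor (equivalently, as an equivalence compatible with the two embeddings of $\mathbf{PAsm}_{\sP}$, namely the universal one into its $\mathsf{reg}/\mathsf{lex}$-completion and the given full inclusion into $\mathbf{Reg}_{\sP}$), so that the projectivity and covering properties, which hold for the canonical copy of $\mathbf{PAsm}_{\sP}$ in $\reglex{\mathbf{PAsm}_{\sP}}$, genuinely transfer back to the subcategory $\mathbf{PAsm}_{\sP}\subseteq\mathbf{Reg}_{\sP}$. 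Once this compatibility is made explicit, both implications are one-line deductions from Theorem~\ref{thm_main_1}.
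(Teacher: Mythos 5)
Your proposal is correct and follows exactly the route the paper intends: the paper gives no separate proof, merely noting that the corollary follows from Theorem~\ref{thm_main_1} together with the intrinsic characterization of regular completions in \cite{SFEC}, which is precisely the dictionary you spell out. Your extra care about reading the equivalence via the canonical comparison functor in the converse direction is a reasonable (and harmless) elaboration of the same argument.
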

\begin{example}
    Relevant examples satisfying the hypotheses of \Cref{cor_main_thm} are implicative algebras associated with a PCA, and implicative algebras associated with a supercoherent locale, see \Cref{ex_superco_realizability_and Heyting_alg}.
\end{example}

As a second corollary of \Cref{thm_main_1}, we obtain a different proof of the characterization of the regular completion of a tripos presented in \cite[Thm. 4.14]{maiettitrotta2021arxiv}:

\begin{corollary}\label{corcor}
Let $\hyperdoctrine{\set}{\sP}$ be an implicative tripos, for a given uniformly supercoherent  implicative algebra $\mathbb{A}$. Then, if $\mathbf{Asm}_{\sP}$ is regular, we have that
\[\reglex{\mathbf{PAsm}_{\sP}}\cong \mathbf{Asm}_{\sP}\cong \mathbf{Reg}_{\sP}.\]
\end{corollary}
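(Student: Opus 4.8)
The first isomorphism is immediate: since $\mathbb{A}$ is uniformly supercoherent, \Cref{cor_main_thm} already gives that $\mathbf{PAsm}_{\sP}$ is a lex full subcategory of $\mathbf{Reg}_{\sP}$ and that $\reglex{\mathbf{PAsm}_{\sP}}\cong \mathbf{Reg}_{\sP}$. So the whole statement reduces to proving $\mathbf{Asm}_{\sP}\cong \mathbf{Reg}_{\sP}$, and for this it suffices to identify $\mathbf{Asm}_{\sP}$ with $\reglex{\mathbf{PAsm}_{\sP}}$. The plan is to exhibit $\mathbf{PAsm}_{\sP}$ as a \emph{projective cover of $\mathbf{Asm}_{\sP}$ by a lex full subcategory of regular projectives}, so that the intrinsic characterization of regular completions from \cite{SFEC} forces $\mathbf{Asm}_{\sP}\cong \reglex{\mathbf{PAsm}_{\sP}}$.

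First I would set up the embeddings. Objects of $\mathbf{Asm}_{\sP}$ are $\mathbf{(U\mbox{-}fSK_p)}$, hence $\mathbf{(fSK_p)}$, hence strongly trackable by \Cref{Prop_strong_tr_iff_fSK}; so by \Cref{rem_track_morph_induces_morph_gr} together with \Cref{lessimpliesequal}, $\mathbf{F}$ restricts to a full and faithful functor $\mathbf{F}\colon \mathbf{Asm}_{\sP}\hookrightarrow \mathbf{Reg}_{\sP}$, and we obtain a chain of full subcategories $\mathbf{PAsm}_{\sP}\subseteq \mathbf{Asm}_{\sP}\subseteq \mathbf{Reg}_{\sP}$. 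Next I would promote this embedding to a \emph{regular} functor. The terminal object is inherited, since the choice rule (valid because $\mathbb{A}$ is uniformly supercoherent, see \Cref{prop_RC_implies_every_top_is_unif_supercompact}) gives $(1,\top)\in \mathbf{Asm}_{\sP}$; and the hypothesis that $\mathbf{Asm}_{\sP}$ is regular, read through the Proposition characterizing when $\mathbf{Asm}_{\sP}$ is a regular subcategory of $\Gamma[\sP]$, forces the $\mathbf{(U\mbox{-}fSK_p)}$ predicates to be closed under finite meets and stable under $\exists$ along regular epis. Hence finite limits and the (regular epi, mono) factorization of $\Gamma[\sP]$ — and therefore, via the regular functor $\mathbf{F}$ of \Cref{prop_F_is_regular}, those of $\mathbf{Reg}_{\sP}$ — restrict to $\mathbf{Asm}_{\sP}$, so the embedding $\mathbf{Asm}_{\sP}\hookrightarrow\mathbf{Reg}_{\sP}$ preserves finite limits and regular epis, i.e. is a regular functor (and it is conservative, being full and faithful).

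Granting this, I would check that $\mathbf{PAsm}_{\sP}$ is a projective cover of $\mathbf{Asm}_{\sP}$. By \Cref{cor_strong_track_regular_proj_iff} every object of $\mathbf{PAsm}_{\sP}$ is regular projective in $\mathbf{Reg}_{\sP}$; since $\mathbf{Asm}_{\sP}\hookrightarrow\mathbf{Reg}_{\sP}$ is full and regular, lifting along a regular epi in $\mathbf{Asm}_{\sP}$ in $\mathbf{Reg}_{\sP}$ and pulling the lift back along fullness shows these objects are regular projective in $\mathbf{Asm}_{\sP}$ as well. Conversely, given $(B,\beta)\in \mathbf{Asm}_{\sP}$, \Cref{thm_main_1} produces a regular epi $\phi=\exists_{\angbr{\id_A}{f}}(\varphi)=\mathbf{F}(f)\colon(A,\varphi)\to(B,\beta)$ in $\mathbf{Reg}_{\sP}$ with $(A,\varphi)\in\mathbf{PAsm}_{\sP}$; as $(A,\varphi)$ and $(B,\beta)$ both lie in $\mathbf{Asm}_{\sP}$ and the embedding is regular, full and faithful, factoring $f$ in $\mathbf{Asm}_{\sP}$ and comparing with the (regular epi, mono) factorization of $\phi$ forces the mono part to be invertible, so $f$ is a regular epi in $\mathbf{Asm}_{\sP}$ — i.e. every object of $\mathbf{Asm}_{\sP}$ is covered by an object of $\mathbf{PAsm}_{\sP}$. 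Finally, $\mathbf{PAsm}_{\sP}$ is a lex full subcategory of $\mathbf{Asm}_{\sP}$: by \Cref{cor_main_thm} (and \Cref{lem_PAsm_lex_iff}) it is already lex in $\mathbf{Reg}_{\sP}$, and a limit of a diagram in $\mathbf{PAsm}_{\sP}$ whose apex lies in $\mathbf{PAsm}_{\sP}$ is automatically a limit in the full subcategory $\mathbf{Asm}_{\sP}$.

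Assembling these facts, the intrinsic description of regular completions in \cite{SFEC} (equivalently, the universal property of $\reglex{(-)}$ applied to the lex inclusion $\mathbf{PAsm}_{\sP}\hookrightarrow \mathbf{Asm}_{\sP}$) gives a regular equivalence $\reglex{\mathbf{PAsm}_{\sP}}\simeq \mathbf{Asm}_{\sP}$ which restricts to the identity on $\mathbf{PAsm}_{\sP}$; together with \Cref{cor_main_thm} this yields the chain $\reglex{\mathbf{PAsm}_{\sP}}\cong \mathbf{Asm}_{\sP}\cong \mathbf{Reg}_{\sP}$, which is the claim. The main obstacle is the middle step: promoting the bare hypothesis ``$\mathbf{Asm}_{\sP}$ is regular'' to the statement that the embedding into $\mathbf{Reg}_{\sP}$ is a regular functor, i.e. that the abstract regular structure of $\mathbf{Asm}_{\sP}$ coincides with the one inherited from $\Gamma[\sP]$ — equivalently, that regularity of $\mathbf{Asm}_{\sP}$ already forces the closure properties of $\mathbf{(U\mbox{-}fSK_p)}$ predicates appearing in the characterization of when $\mathbf{Asm}_{\sP}$ is a regular subcategory of $\Gamma[\sP]$. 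Once that is secured, everything else is bookkeeping with projective covers.
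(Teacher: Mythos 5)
Your proof is correct and follows the route the paper intends: the corollary is stated without an explicit proof as an immediate consequence of \Cref{thm_main_1} and \Cref{cor_main_thm}, and your reconstruction — showing that $\mathbf{PAsm}_{\sP}$ is a lex projective cover of $\mathbf{Asm}_{\sP}$ as well as of $\mathbf{Reg}_{\sP}$, then applying the characterization of $\mathsf{reg}/\mathsf{lex}$-completions to both sides — is exactly the argument being gestured at. Your reading of the hypothesis ``$\mathbf{Asm}_{\sP}$ is regular'' as ``regular subcategory of $\Gamma[\sP]$'' in the sense of the earlier Proposition (so that the embedding into $\mathbf{Reg}_{\sP}$ is a regular functor) is the intended one, and you correctly identify that as the only step requiring real care.
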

Notice that the proof \Cref{thm_main_1} can be reproduced to obtain the following result:
\begin{theorem}
   Let $\hyperdoctrine{\set}{\sP}$ be an implicative tripos, for a given implicative algebra $\mathbb{A}$. If $\mathbf{Asm}_{\sP}$  is regular, then we have that  $\mathbb{A}$ is uniformly functional supercoherent if and only if $\mathbf{PAsm}_{\sP}$ is a lex subcategory of $\mathbf{Asm}_{\sP}$  and $\reglex{\mathbf{PAsm}_{\sP}}\cong \mathbf{Asm}_{\sP}.$
\end{theorem}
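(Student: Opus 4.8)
The plan is to mirror the structure of the proof of \Cref{thm_main_1}, replacing $\mathbf{Reg}_{\sP}$ with $\mathbf{Asm}_{\sP}$ (legitimate because we have assumed $\mathbf{Asm}_{\sP}$ is regular) and $\mathbf{(U\mbox{-}SK_p)}$ with $\mathbf{(U\mbox{-}fSK_p)}$ throughout. First I would observe that, under the hypothesis that $\mathbf{Asm}_{\sP}$ is regular, the category $\mathbf{STrack}_{\sP}$ of strongly trackable objects coincides (via \Cref{Prop_strong_tr_iff_fSK}) with $\mathbf{Asm}_{\sP}$: an object of $\mathbf{Reg}_{\sP}$ lying in $\mathbf{Asm}_{\sP}$ is one whose predicate is $\mathbf{(U\mbox{-}fSK_p)}$, hence in particular $\mathbf{(fSK_p)}$, hence strongly trackable, and conversely strongly trackable objects that are in $\mathbf{Asm}_{\sP}$ are exactly the $\mathbf{(U\mbox{-}fSK_p)}$ ones. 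Thus regular epis and finite limits in $\mathbf{Asm}_{\sP}$ are computed as in $\mathbf{Reg}_{\sP}$ (since $\mathbf{Asm}_{\sP}$ is a regular subcategory by hypothesis), and the functor $\mathbf{F}\colon\Gamma[\sP]\to\mathbf{Reg}_{\sP}$ restricts appropriately.

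Next, for the forward direction, assume $\mathbb{A}$ is uniformly functional-supercoherent (see \Cref{def_unif_functional_supercoherent}). The closure of $\mathbf{(U\mbox{-}SK_p)}$ predicates under binary meets (the second clause of that definition) shows, via \Cref{lem_PAsm_lex_iff} and its proof together with the proof of \Cref{gammafl}, that $\mathbf{PAsm}_{\sP}$ is a lex full subcategory of $\Gamma[\sP]$, hence of $\mathbf{Asm}_{\sP}$; here I also use that partitioned assemblies are strongly trackable and that $\mathbf{(U\mbox{-}SK_p)}\Rightarrow\mathbf{(U\mbox{-}fSK_p)}$ (via \Cref{sk->fsk} and the uniform versions), so that $\mathbf{PAsm}_{\sP}\subseteq\mathbf{Asm}_{\sP}$. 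To see $\mathbf{PAsm}_{\sP}$ is a projective cover of $\mathbf{Asm}_{\sP}$: its objects are regular projective by \Cref{cor_strong_track_regular_proj_iff}, and given $(B,\beta)$ in $\mathbf{Asm}_{\sP}$, i.e.\ $\beta$ a $\mathbf{(U\mbox{-}fSK_p)}$ predicate, the third clause of \Cref{def_unif_functional_supercoherent} furnishes a set $I$, a function $\freccia{I}{f}{B}$ and a $\mathbf{(U\mbox{-}SK_p)}$ predicate $\varphi\in\sP(I)$ with $\exists_f(\varphi)=\beta$ (translating the two $\Sigma$-membership conditions into the single equality $\exists_f(\varphi)=\beta$ using \Cref{regepigamma}). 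Then $\phi:=\exists_{\angbr{\id_I}{f}}(\varphi)$ is a morphism $\freccia{(I,\varphi)}{\phi}{(B,\beta)}$ in $\mathbf{Reg}_{\sP}$ which is a regular epi by \Cref{regepiasm} (since $\exists_{\pr_B}(\phi)=\exists_f(\varphi)=\beta$), and it lies in $\mathbf{Asm}_{\sP}$; this is exactly a regular-projective cover.

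For the converse, assume $\mathbf{PAsm}_{\sP}$ is a lex subcategory of $\mathbf{Asm}_{\sP}$ providing a projective cover. Lexness gives, via \Cref{lem_PAsm_lex_iff}, that $\mathbf{(U\mbox{-}SK_p)}$ predicates (equivalently, $\mathbf{U\mbox{-}SK}$ families) are closed under finite meets; and the choice rule clause follows because $(1,\top_1)$ being terminal in the lex subcategory $\mathbf{PAsm}_{\sP}$ forces $\top_I$ to be $\mathbf{(U\mbox{-}SK_p)}$ for every $I$ (using \Cref{prop_RC_implies_every_top_is_unif_supercompact}, and that reindexings of $\top$ are $\top$ so that the uniform version is automatic). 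For the last clause: take a $\mathbf{(U\mbox{-}fSK_p)}$ predicate $\beta\in\sP(B)$; by the projective-cover hypothesis the object $(B,\beta)$ of $\mathbf{Asm}_{\sP}$ is covered by a regular epi $\freccia{(I,\varphi)}{\phi}{(B,\beta)}$ with $(I,\varphi)\in\mathbf{PAsm}_{\sP}$; since every object of $\mathbf{PAsm}_{\sP}$ is strongly trackable we get $\phi=\exists_{\angbr{\id_I}{f_{\phi}}}(\varphi)$ for some $\freccia{I}{f_{\phi}}{B}$, and since $\phi$ is a regular epi, \Cref{regepiasm} gives $\exists_{f_{\phi}}(\varphi)=\beta$, i.e.\ $\beta$ is covered by the $\mathbf{(U\mbox{-}SK_p)}$ predicate $\varphi$ along $f_{\phi}$. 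This verifies all three clauses of \Cref{def_unif_functional_supercoherent}.

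The main obstacle I anticipate is bookkeeping around the regularity hypothesis on $\mathbf{Asm}_{\sP}$: one must be careful that regular epis and finite limits in $\mathbf{Asm}_{\sP}$ genuinely agree with those inherited from $\mathbf{Reg}_{\sP}$, so that \Cref{regepiasm}, \Cref{regepigamma} and the proof of \Cref{gammafl} can be reused verbatim — this is precisely what the assumption ``$\mathbf{Asm}_{\sP}$ is a regular subcategory'' buys us, and the earlier proposition characterizing when $\mathbf{Asm}_{\sP}$ is regular should be invoked to make this explicit. The only genuinely new point relative to \Cref{thm_main_1} is that the covering predicate must be checked to land in $\mathbf{Asm}_{\sP}$, but this is immediate since it is $\mathbf{(U\mbox{-}SK_p)}$ hence $\mathbf{(U\mbox{-}fSK_p)}$; everything else is a faithful transcription of the earlier argument with $\mathbf{Reg}$ replaced by $\mathbf{Asm}$ and the supercoherence clauses replaced by their functional analogues.
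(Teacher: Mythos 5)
Your proposal is correct and follows essentially the same route as the paper, which itself only remarks that the proof of \Cref{thm_main_1} "can be reproduced" with $\mathbf{Reg}_{\sP}$ replaced by $\mathbf{Asm}_{\sP}$ and the supercoherence clauses replaced by their functional analogues; you have carried out exactly that transcription, including the only genuinely new checks (that the covering objects land in $\mathbf{Asm}_{\sP}$, and that the regularity hypothesis makes the regular epis and finite limits of $\mathbf{Asm}_{\sP}$ agree with those inherited from $\mathbf{Reg}_{\sP}$ so that \Cref{regepiasm} and \Cref{cor_strong_track_regular_proj_iff} still apply). The passage from "lex projective cover" to $\reglex{\mathbf{PAsm}_{\sP}}\cong\mathbf{Asm}_{\sP}$ via the intrinsic characterization of \cite{SFEC} is the same final step the paper uses in \Cref{cor_main_thm}.
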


\begin{example}
Let $\mathbb{B}$ be a complete atomic boolean algebra which we can think of as the powerset algebra of some set $B$. In $\mathbb{B}$, supercompact elements are atoms which are not closed under infima. This in particular implies that uniformly supercompact predicates are not closed under finite meets.

One can easily see that $\mathbf{PAsm}_{\mathbb{B}}$ is equivalent to the slice category $\set/B$. \footnote{Here and in the following examples we will use subscript $\mathbb{H}$ instead of $\mathsf{P}$ is $\mathsf{P}$ is the implicative tripos arising from the complete Heyting algebra $\mathbb{H}$.}
Although $\set/B$ is clearly a finitely complete category, the inclusion functor into $\Gamma[\sP_{\mathbb{B}}]$ does not preserve finite limits. E.g. the terminal object in $\set/B$ is the identity function from $B$ to $B$ which is set to the assembly $(B,x\mapsto \{x\})$ which is not terminal in $\Gamma[\sP_{\mathbb{B}}]$. 
\end{example}

\begin{example} Let $\mathbb{H}$ be a complete Heyting algebra without supercompact elements. Then $\mathbf{PAsm}_{\mathbb{H}}$ is a trivial category with just one object and the identity arrow.
\end{example}

\begin{example} Consider the Sierpinski space $\mathbf{3}$ which is a supercoherent locale.

It turns out that $\mathbf{PAsm}_{\mathbf{3}}$ is equivalent to the Grothendieck category $\Gamma(\mathbf{Pow})$ of the powerset doctrine $\mathbf{Pow}$ over $\set$, since $1$ and $2$ are supercompact in $\mathbf{3}$ and $1\leq 2$.
Using Corollary \ref{corcor}, we get that $\mathbf{Reg}_{\mathbf{3}}\simeq \reglex{\Gamma[\mathbf{Pow}]}$.

This result can be easily generalized, by considering the locales $\mathbf{n}$ (ordered in the usual way) which are always supercoherent. Since $\mathbf{PAsm}_{\mathbf{n+1}}\simeq \Gamma[\mathsf{P}_{\mathbf{n}}]$, we get that $\mathbf{Reg}_{\mathbf{n+1}}\simeq \reglex{\Gamma[\mathsf{P}_{\mathbf{n}}]}$ for every $n\in \mathbb{N}$.

\end{example}
\subsection{Relation with another notion of assemblies}
In a recent work \cite{CMW}, the authors propose a different notion of implicative assemblies, by generalizing the notion of realizability assemblies in a different direction. Since for realizability implicative algebras $\mathcal{P}(R)\setminus\{\emptyset\}$ is the separator, they define the category of assemblies as the category having as objects pairs $(A,\alpha)$ with $A$ a set and $\alpha:A\rightarrow \Sigma$ and of which an arrow from $(A,\alpha)$ to $(B,\beta)$ is a function $f:A\rightarrow B$ such that $\bigwedge_{x\in A}(\alpha(x)\rightarrow \beta(f(x)))\in \Sigma$. The authors prove that this category is always a quasi-topos. 

In the localic case this category is equivalent to the category of sets, and localic triposes are characterized in \cite{CMW} exactly as those for which such a category is an elementary topos.

One disadvantage of this approach is that the category of assemblies is not in general a full subcategory of the implicative topos. Moreover, it does not contain in general the category of partitioned assemblies as we defined it, since $\mathbf{U}\mbox{-}\mathbf{SK}$ predicates are not in general evaluated in $\Sigma$ (consider e.g.\ the localic case).

The relation with this category of assemblies and our proposal for a category of assemblies is also in general non well-behaved.

\subsection{Categories of implicative modest sets}
Let us end this section by considering the other four full subcategories $\mathsf{Mod}_{\wedge}$, $\mathsf{R}_{\wedge}$, $\mathsf{Mod}_{\times}$ and  $\mathsf{R}_{\times}$ of $\mathbf{Reg}_{\sP}$ of which the objects are assemblies $(A,\alpha)$ of which families corresponding to $\alpha$ can be chosen\footnote{While $\times$-disjointness is a property which is stable under equivalence of families, $\wedge$-disjointness is not in general.} to be $\wedge$-modest families, $\wedge$-core families, $\times$-modest families and  $\times$-core families, respectively. Their inclusions in $\mathbf{Reg}_{\sP}$  factorize through $\mathbf{Asm}_{\sP}$, thus their arrows are always uniquely tracked by functions $f$ between the underlying sets of their domains and codomains. 

Obviously one has the following square of embeddings:

$$\begin{tikzcd}
\mathsf{R}_{\times} & \mathsf{Mod}_{\times} \\
	\mathsf{R}_{\wedge} & \mathsf{Mod}_{\wedge}\\
	\arrow[hook, from=2-1, to=2-2]
 \arrow[hook, from=1-1, to=1-2]
 \arrow[hook, from=1-1, to=2-1]
  \arrow[hook, from=1-2, to=2-2]
\end{tikzcd}$$

Moreover one can easily prove that:
\begin{proposition} $\mathsf{Mod}_{\times}$ and $\mathsf{R}_{\times}$ are preorders.
\end{proposition}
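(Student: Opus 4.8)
The plan is to show that any two parallel arrows $\phi,\psi\colon (A,\alpha)\to (B,\beta)$ in $\mathsf{Mod}_{\times}$ (equivalently in $\mathsf{R}_{\times}$) are equal, so that each hom-set has at most one element. Since both categories embed fully and faithfully in $\mathbf{Reg}_{\sP}$, it suffices to argue inside $\mathbf{Reg}_{\sP}$. The key structural inputs are: first, that objects of $\mathsf{Mod}_{\times}$ and $\mathsf{R}_{\times}$ are assemblies whose predicate $\alpha$ corresponds to a $\times$-modest family, hence in particular $\mathbf{U\mbox{-}fSK}$, so by \Cref{Prop_strong_tr_iff_fSK} they are strongly trackable; and second, that the families are $\times$-disjoint. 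The crucial remark \Cref{lessimpliesequal} tells us that in $\mathbf{Reg}_{\sP}$ two parallel arrows with $\phi\leq\psi$ are automatically equal, so I only need to produce \emph{some} inequality between $\phi$ and $\psi$ in the fibre.

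First I would use strong trackability: both $\phi$ and $\psi$ are tracked by unique base maps $f_\phi,f_\psi\colon A\to B$, with $\phi=\exists_{\angbr{\id_A}{f_\phi}}(\alpha)$ and $\psi=\exists_{\angbr{\id_A}{f_\psi}}(\alpha)$ (using \Cref{rem_track_morph_induces_morph_gr} and the fact that trackable arrows coincide with their image under $\mathbf{F}$). So it is enough to show $f_\phi=f_\psi$ as functions $A\to B$. Now I exploit $\times$-disjointness of the family $(\beta(b))_{b\in B}$ together with the fact that $\alpha$ is $\mathbf{U\mbox{-}fSK}$: the predicate $\beta$, pulled back along $f_\phi$ and along $f_\psi$, gives two choice functions witnessing $\alpha\leq \exists_{\pr_A}$ of a suitable $\times$-functional reindexing of $\beta$; the uniqueness clause in the definition of $\mathbf{fSK}$ forces the two choices to agree. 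More concretely, consider the $\times$-functional family obtained from $(\beta(b))_{b\in B}$ indexed over $A\times B$ (functionality is exactly $\times$-disjointness of $\beta$, per \Cref{functional_implies_disjoint_for_fix_i} read backwards), check that $\bigwedge_{a\in A}(\alpha(a)\to\bigexists_{b\in B}\beta(b))\in\Sigma$ — which holds since $\phi$ (or $\psi$) is an arrow with $\alpha=\exists_{\pr_1}(\phi)\leq\sP_{\pr_1}(\beta\text{-stuff})$ — and then the unique witness function $A\to B$ produced by $\mathbf{fSK}$ must coincide with both $f_\phi$ and $f_\psi$.

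The main obstacle I anticipate is bookkeeping: making precise the passage between the predicate-level statement ``$\alpha$ is $\mathbf{U\mbox{-}fSK}$'' and the arrow-level statement about tracking, i.e.\ verifying that the hypotheses of \Cref{def_supercomp_in_triposes}'s $\mathbf{fSK_p}$ clause are genuinely met by the functional predicate built from $\beta$ and that the two tracking maps $f_\phi,f_\psi$ are both admissible witnesses. The $\times$-disjointness of $\beta$ is what guarantees the functionality hypothesis; the $\mathbf{U\mbox{-}fSK}$ (equivalently $\mathbf{U\mbox{-}wfSK}$, by \Cref{cor_UwfSK_iff_UfSK}) hypothesis on $\alpha$ supplies the \emph{uniqueness} of the witness, which is precisely what collapses the hom-set. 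Everything else — the reduction to a single inequality via \Cref{lessimpliesequal}, and the identification $\phi=\mathbf{F}(f_\phi)$ — is routine given the results already in the excerpt. Once uniqueness of the tracking map is in hand, $f_\phi=f_\psi$ and hence $\phi=\psi$, so $\mathsf{Mod}_{\times}$ and $\mathsf{R}_{\times}$ are preorders.
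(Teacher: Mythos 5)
Your proposal is correct and follows essentially the same route as the paper: reduce to the two tracking functions $f_\phi,f_\psi\colon A\to B$ (which exist because the inclusion factors through $\mathbf{Asm}_{\sP}$, i.e.\ $\alpha$ is $\mathbf{fSK}$ and hence strongly trackable), observe that the constant-in-$A$ family built from the $\times$-disjoint $\beta$ is $\times$-functional, and invoke the uniqueness clause of $\mathbf{fSK}$ to conclude $f_\phi=f_\psi$. The paper's proof is just a terser version of this, taking the tracking functions as given and omitting the explicit appeal to \Cref{lessimpliesequal}.
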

\begin{proof}
Let $(A,\alpha)$ and $(B,\beta)$ be two objects of $\mathsf{Mod}_{\times}$ and let $f,g:A\rightarrow B$ be two functions such that
$$\bigwedge_{x\in A}(\alpha(x)\rightarrow \beta(f(x)))\in \Sigma$$
$$\bigwedge_{x\in A}(\alpha(x)\rightarrow \beta(g(x)))\in \Sigma$$
From this, it follows that
$$\bigwedge_{x\in A}(\alpha(x)\rightarrow \bigexists_{y\in B}\beta(y))\in \Sigma$$
and since
$$\bigwedge_{y,y'\in B}(\beta(y)\times \beta(y')\rightarrow \delta_{B}(y,y'))$$
we can conclude that $f=g$.

Thus $\mathsf{Mod}_{\times}$ is a preorder. Since $\mathsf{R}_{\times}$ is one of its full subcategories, it is a preorder too.
\end{proof}
\begin{example}If $(\Omega,\tau)$ is a topological space and we consider the locale $(\tau, \subseteq)$, then the category $\mathsf{Mod}_{\wedge}=\mathsf{Mod}_{\times}$ is equivalent to the preorder $(\tau,\subseteq)$ itself. Indeed, every open set in $\tau$ is a disjoint union of non-empty connected open sets. Moreover, $\mathsf{R}_{\wedge}=\mathsf{R}_{\times}$ is equivalent to the full sub-poset of $(\tau, \subseteq)$ of which the objects are the open sets which are disjoint unions of supercompact opens.
\end{example}
\begin{example} In the realizability case, $\mathsf{Mod}_{\wedge}$ is a category equivalent to that of modest sets or PERs (see \cite{rosMS}), $\mathsf{R}_{\wedge}$ is equivalent to the category of which the objects are subsets of realizers and of which arrows are functions between them that are restrictions of partial functions which are computable with respect to the combinatory algebra (this category is called $\mathsf{R}$ in \cite{RR}). $\mathsf{Mod}_{\times}$ and $\mathsf{R}_{\times}$ coincide and are equivalent to the partial order $\mathbf{2}$.  
\end{example}

\begin{example} If $\mathbb{B}$ is a complete boolean algebra, then $\mathsf{Mod}_{\wedge}=\mathsf{Mod}_{\times}=\mathsf{R}_{\wedge}=\mathsf{R}_{\times}$ is equivalent to the full sub-preorder of $(\tau, \subseteq)$ of which the objects are those opens which can be written as unions of atoms.
\end{example}
\subsubsection*{Acknowledgements} The authors would like to thank Jonas Frey for fruitful conversations on the topic of the paper and for useful comments on a preliminary version of the present work. The authors are also grateful to Francesco Ciraulo and Milly Maietti for several discussions regarding various aspects of the paper.

\bibliographystyle{plain}
\bibliography{biblio_davide}

\end{document}